\documentclass[12pt]{article}
\usepackage[margin=2cm]{geometry}
\date{}

\usepackage{graphicx}
\usepackage{tikz}
\usepackage{circuitikz}
\usepackage{subcaption}
\usetikzlibrary{decorations.pathreplacing}
\usepackage[numbers]{natbib}

\usepackage{bbold}
\DeclareRobustCommand{\bbone}{\text{\usefont{U}{bbold}{m}{n}1}}
\usetikzlibrary{decorations.pathreplacing}
\usepackage{float}
\usepackage{amsthm}

\usepackage{amsthm}
\usepackage{amsmath}
\usepackage{amssymb}
\usepackage{hyperref}

\newtheorem{theorem}{Theorem}[section]

\theoremstyle{plain}
\newtheorem{lemma}[theorem]{Lemma}

\newtheorem{proposition}[theorem]{Proposition}

\theoremstyle{definition}
\newtheorem{definition}[theorem]{Definition}

\theoremstyle{remark}
\newtheorem{remark}[theorem]{Remark}
\title{On the Realizability of Edge-Girth Sequences}

\author{
Lilian Marey\thanks{LTCI, Télécom Paris, Institut Polytechnique de Paris, Palaiseau, France (\texttt{lilian.marey@telecom-paris.fr}).}
\and
Paul Hilaire\thanks{LTCI, Inria, Télécom Paris, Institut Polytechnique de Paris, Palaiseau, France (\texttt{paul.hilaire@telecom-paris.fr}).}
\and
Charlotte Laclau\thanks{LTCI, Télécom Paris, Institut Polytechnique de Paris, Palaiseau, France (\texttt{charlotte.laclau@telecom-paris.fr}).}
}

\begin{document}

\maketitle


\begin{abstract}
The \emph{edge-girth} of an edge $e$ in a simple connected graph is the
length of a shortest cycle containing $e$, with $g_e = \infty$ if no such
cycle exists. The \emph{edge-girth sequence} of a graph is the nondecreasing
sequence of edge-girth values over all its edges. We prove that a sequence $S$ is realizable as the edge-girth sequence of a
simple connected graph if and only if it satisfies a recursive criterion:
writing $S = S_0 \uplus (g^{(m)})$ where $g$ is the maximum edge-girth value
of $S$ with multiplicity $m$ and $S_0$ is the prefix subsequence, $S$ is realizable if and only if $S_0$ is realizable and the
multiplicity $m$ lies in a set entirely determined by $g$
and the maximum diameter $d^*_{S_0}$ achievable by graphs realizing $S_0$. We further determine $d^*_S$ for any realizable
sequence: for constant sequences $(g^{(m)})$, we obtain a closed-form
formula when $g$ is even and a recursive formula when $g$ is odd. For
general sequences, we provide a recursive algorithm computing $d^*_S$
together with explicit constructions of diameter-achieving graphs.
\end{abstract}



\section{Introduction}

The \emph{graph realization problem} asks whether a graph whose
structure matches a prescribed sequence of invariants exists, and how to construct
one if so. The most celebrated instance of this problem is the Erdős--Gallai
theorem~\cite{erdHos1960graphs}, characterizing realizable degree sequences,
with constructive approaches due to Havel~\cite{Havel1955} and
Hakimi~\cite{hakimi1962realizability}. The framework has since been extended
in many directions: to joint degree distributions~\cite{stanton2012constructing},
neighborhood degree lists~\cite{barrus2018neighborhood}, and cycle-based
invariants such as cage graphs~\cite{exoo2012dynamic} and edge-girth-regular
graphs~\cite{jajcay2018edge, goedgebeur2025exhaustive}. Cycle structure in
particular has attracted significant attention, including the study of cycle
lengths in regular graphs~\cite{alon2022cycle}, short cycles in random regular
graphs~\cite{mckay2004short}, and cycle distributions in $2$-regular
graphs~\cite{lopez2018exactly}.

In this paper, we study the \emph{edge-girth} $g_e$ of an edge $e$, defined
as the length of a shortest cycle containing $e$, with $g_e = \infty$ if no
such cycle exists. We define the \emph{edge-girth sequence} $\sigma(G)$ of a
graph $G$ as the nondecreasing sequence of edge-girth values over all edges
(Figure~\ref{fig_edge-girth-seq}), and ask: which sequences are realizable?
This is a strictly finer invariant than the classical \emph{graph girth}
$\min_{e \in E} g_e$: two graphs may share the same girth while having
entirely different edge-girth sequences. Our notion also differs from that of edge-girth-regular graphs, introduced
by Jajcay et al.~\cite{jajcay2018edge}, where every edge is required to lie
on the same number $\lambda$ of girth cycles in a regular graph (see also \cite{goedgebeur2025exhaustive} for an exhaustive generation of such graphs).
Here, we allow the shortest cycle length to vary from edge to edge and impose
no regularity assumption.

The edge-girth arises naturally in several contexts. In
\cite{woodhouse2016stochastic}, Woodhouse et al.\ identify $g_e$ as a
transition rate indicator in flow networks. In the replacement path
problem~\cite{bernstein2010nearly}, $g_e$ equals one plus the length of the
shortest path between the endpoints of $e$ avoiding $e$ itself. It also
appears in the analysis of Tanner graphs for low-density parity-check
codes~\cite{xu2025ldpc}, where short cycles -- precisely those captured by
small edge-girth values -- degrade decoding performance.

Our main result, Theorem~\ref{thm:main}, gives a complete characterization of realizable edge-girth sequences in terms of a recursive condition involving the multiplicity of the maximum edge-girth value and the maximum diameter of graphs realizing the prefix sequence.

The paper is organized as follows. Section~\ref{sec:notation} introduces
notation and preliminary results. Section~\ref{sec:operations} develops the
graph operations used throughout. Sections~\ref{sec:base},
\ref{sec:induction}, and~\ref{sec:main} establish the characterization, and
Section~\ref{sec:diam} studies the maximum diameter of graphs under
edge-girth sequence constraints.

\begin{figure}[H]
\centering
\begin{tikzpicture}[
  every node/.style={circle, draw, fill=white, inner sep=2pt, minimum size=18pt},
]

\node (t1) at (0, 1.5)  {};
\node (t2) at (0, 0)    {};
\node (t3) at (1.5, 0.75) {};

\draw[thick, blue] (t1) -- node[draw=none, fill=none, left,  blue] {$3$} (t2);
\draw[thick, blue] (t2) -- node[draw=none, fill=none, below, blue] {$3$} (t3);
\draw[thick, blue] (t1) -- node[draw=none, fill=none, above, blue] {$3$} (t3);

\node (q1) at (-1.5, 1.5) {};
\node (q2) at (-1.5, 0)   {};

\draw[thick, dashed, red] (t1) -- node[draw=none, fill=none, above, red] {$4$} (q1);
\draw[thick, dashed, red] (q1) -- node[draw=none, fill=none, left,  red] {$4$} (q2);
\draw[thick, dashed, red] (q2) -- node[draw=none, fill=none, below, red] {$4$} (t2);

\node (b1) at (3, 0.75) {};
\draw[thick, dotted, black] (t3) --
  node[draw=none, fill=none, above] {$\infty$} (b1);


\end{tikzpicture}
\caption{A graph $G$ with edge-girth sequence
  $\sigma(G) = \bigl(3^{(3)},\, 4^{(3)},\, \infty^{(1)}\bigr)$.
  Solid edges have edge-girth~$3$, dashed edges have edge-girth~$4$,
  and the dotted edge is a bridge ($g_e = \infty$).}
\label{fig_edge-girth-seq}
\end{figure}

\section{Preliminaries}
\label{sec:notation}

We introduce the notation and basic definitions used throughout the paper,
and establish key properties relating edge-girth to shortest-path
distance that will be used in subsequent sections.

\subsection{Definitions and Notation}

Let $\mathcal{G}$ denote the class of simple connected graphs. 
For a graph
$G \in \mathcal{G}$, we write $V(G)$ for the vertex set and $E(G)$ for the edge set.

\begin{definition}[Edge-girth]
\label{def:edge-girth}
Let $G \in \mathcal{G}$. For an edge $e \in E(G)$, the \emph{edge-girth} of $e$,
denoted $g_e$, is the minimum number of edges in a simple cycle of $G$ containing $e$.
If no such cycle exists, we set $g_e = \infty$. When the graph must be specified
explicitly, we write $g_{e, G}$.
\end{definition}

Note that $g_e \in \{3, 4, 5, \ldots\} \cup \{\infty\}$ for every $e \in E(G)$. We
define the set of admissible values
\[
  \mathcal{C} \;:=\; \{3,4,5,\ldots\} \cup \{\infty\},
\]
and the set of finite admissible values $\mathcal{C}^* \;:=\; \mathcal{C} \setminus \{ \infty\}$.

\begin{definition}[Edge-girth sequence]
\label{def:edge-girth-seq}
The \emph{edge-girth sequence} of $G \in \mathcal{G}$ is the nondecreasing sequence
of values $(g_e)_{e \in E(G)}$, written as
\[
  \sigma(G) \;=\; \bigl(g_1^{(m_1)},\, \ldots,\, g_k^{(m_k)}\bigr),
\]
where $g_1 < \cdots < g_k$ are the distinct edge-girth values and $m_i$ denotes
the multiplicity of $g_i$. 
\end{definition}

\begin{definition}[Sequence of admissible values]
\label{def:seq-C}
Let $\mathrm{Seq}(\mathcal{C}) := \bigcup_{n \geq 0} \mathcal{C}^n$ denote the set
of all finite sequences over $\mathcal{C}$, including the empty sequence.
For a non-empty sequence $S = (g_1^{(m_1)},\, \ldots,\, g_k^{(m_k)}) \in \mathrm{Seq}(\mathcal{C})$, we write $\max(S) := g_k$ for the maximum edge-girth value of $S$.
\end{definition}

\begin{definition}[Realizable sequence, realization set]
\label{def:realizable}
A sequence $S \in \mathrm{Seq}(\mathcal{C})$ is \emph{realizable} if there exists
$G \in \mathcal{G}$ such that $\sigma(G) = S$. We denote by
\[
  \mathcal{S} \;:=\; \bigl\{\, S \in \mathrm{Seq}(\mathcal{C})
    \mid \exists\, G \in \mathcal{G},\; \sigma(G) = S \,\bigr\}
\]
the set of all realizable sequences, and by $\mathcal{S}^* := \mathcal{S} \cap
\mathrm{Seq}(\mathcal{C}^*)$ the subset of those with only finite values.
For $S \in \mathcal{S}$ written as $S = (g_1^{(m_1)}, \ldots, g_k^{(m_k)})$, we
set $|S| := \sum_{i=1}^k m_i$. Note that any $G \in \sigma^{-1}(S)$ has exactly
$|S|$ edges. The \emph{realization set} of $S$ is
\[
  \sigma^{-1}(S) \;:=\; \{\, G \in \mathcal{G} \mid \sigma(G) = S \,\}.
\]
By convention, the empty sequence belongs to $\mathcal{S}$, with realization set
consisting of all edgeless graphs.
\end{definition}

\begin{definition}[Multiset sum]
\label{def:uplus}
For two sequences $S_1, S_2 \in \mathrm{Seq}(\mathcal{C})$, their \emph{multiset
sum} is the sequence
\[
  S_1 \uplus S_2 \;:=\; \bigl(g^{(m_g(S_1) + m_g(S_2))}\bigr)_{g \in \mathcal{C}},
\]
where $m_g(S)$ denotes the multiplicity of $g$ in $S$, with
$m_g(S) = 0$ if $g$ does not appear in $S$.
\end{definition}

\begin{definition}[Realizing cycle]
\label{def:realizing-cycle}
Let $G \in \mathcal{G}$ and let $C$ be a cycle of $G$. We say that $C$
\emph{realizes} the edge-girth of $e \in E(C)$ if $|C| = g_e$ (where $|C|$ denotes the number of edges in cycle $C$), and call $C$ a
\emph{realizing cycle} for $e$.
\end{definition}


For $G \in \mathcal{G}$ and $u, v \in V(G)$, we denote by $d_G(u,v)$ the
\emph{shortest-path distance} between $u$ and $v$ in $G$, i.e., the minimum
number of edges in a path from $u$ to $v$. The \emph{diameter} of $G$ is
$\mathrm{diam}(G) := \max_{u,v \in V(G)} d_G(u,v)$.

\begin{definition}[Maximum diameter]
\label{def:diam}
Let $S \in \mathcal{S}$. The \emph{maximum diameter} of $S$ is
\[
  d^*_S := \max\{\mathrm{diam}(G) \mid G \in \sigma^{-1}(S)\}.
\]
\end{definition}

\begin{remark}
\label{rem:diam-welldefined}
Since $S \in \mathcal{S}$, the set $\sigma^{-1}(S)$ is non-empty. Every
$G \in \sigma^{-1}(S)$ has exactly $|S|$ edges, so $\mathrm{diam}(G) \leq |S|$
and the set of diameters is finite. Hence $d^*_S$ is well-defined and attained.
\end{remark}

\subsection{Edge-Girth and Graph Distance}

\begin{proposition}
\label{prop:realizing_shortest}
Let $G \in \mathcal{G}$, $e = \{v_1, v_2\} \in E(G)$ with $g_e = g < \infty$, and
let $C = (v_1, v_2, \ldots, v_g, v_1)$ be a realizing cycle for $e$.
\begin{enumerate}
  \item[\textup{(1)}] $d_{G \setminus \{e\}}(v_1, v_2) = g - 1$.
  \item[\textup{(2)}] For all $i, j \in \{1, \ldots, g\}$,
    $d_G(v_i, v_j) = \min(|i-j|,\, g - |i-j|)$.
  \item[\textup{(3)}] For all $k \in \{1, \ldots, \lfloor g/2 \rfloor\}$, there exist
    $i, j \in \{1, \ldots, g\}$ such that $d_G(v_i, v_j) = k$.
\end{enumerate}
\end{proposition}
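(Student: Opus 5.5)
Part (1) is a direct correspondence between cycles through $e$ and paths avoiding $e$. Any simple cycle through $e=\{v_1,v_2\}$ of length $\ell$ becomes, after deleting $e$, a $v_1$–$v_2$ path in $G\setminus\{e\}$ of length $\ell-1$. Conversely, a shortest $v_1$–$v_2$ path in $G\setminus\{e\}$ is simple, has length at least $2$ because $G$ is simple, and closes with $e$ into a simple cycle through $e$. So $d_{G\setminus\{e\}}(v_1,v_2)+1$ is exactly the minimum length of a cycle through $e$, namely $g$. Equivalently, $C$ minus $e$ is a path of length $g-1$; a strictly shorter path would give a cycle through $e$ shorter than $g$, a contradiction.

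For (2), the upper bound $d_G(v_i,v_j)\le \min(|i-j|,g-|i-j|)$ is immediate: take one of the two arcs of $C$. The lower bound is the main step. Suppose, for contradiction, that some pair has $d_G(v_i,v_j)<\delta:=\min(|i-j|,g-|i-j|)$, and let $P$ be a shortest $v_i$–$v_j$ path. Split $C$ at $v_i,v_j$ into two arcs $A$ (of length $\delta$) and $B$ (of length $g-\delta$). The edge $e$ lies on one of them.

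If $e\in B$, replace $A$ by $P$. This gives the closed walk $B\cup P$ through $e$ of length less than $g$. If $e\in A$, replace $B$ by $P$. Then $A\cup P$ has length less than $\delta+\delta\le g$. In either case we obtain a closed walk of length $<g$ that contains $e$, and traverses $e$ exactly once provided $e\notin P$.

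The obstacle is that $P$ may share vertices or edges with the kept arc, so the closed walk need not be a simple cycle. I would handle this with the standard fact that a closed walk using $e$ exactly once contains a simple cycle through $e$ of no greater length. To see it, remove $e$ to get a $v_1$–$v_2$ walk avoiding $e$, shorten it to a path, and add $e$ back; this is essentially (1).

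It remains to make sure $P$ avoids $e$. If $e\in P$, a shortest path through $e=\{v_1,v_2\}$ reaches $v_1$ and $v_2$ consecutively. One can then instead route from $v_i$ to $v_1$ or $v_2$ via $P$ and combine with arcs of $C$, but the cleaner choice is to work with $G\setminus\{e\}$. Compare $d_{G\setminus\{e\}}(v_i,v_j)$ with $d_G$: if every shortest path uses $e$, write $d_G(v_i,v_j)=d(v_i,v_1)+1+d(v_2,v_j)$ (up to orientation) and apply the argument inductively to the pairs $(v_i,v_1)$ and $(v_2,v_j)$. Their cycle distances $\delta_1,\delta_2$ satisfy $\delta_1+1+\delta_2\ge\delta$ by the triangle inequality on the cycle. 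So I would prove (2) by induction on $\delta$, or by minimal counterexample over pairs, which dispatches this case.

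Part (3) then follows immediately from (2): for $1\le k\le\lfloor g/2\rfloor$, take $i=1$ and $j=k+1\le g$. Then $|i-j|=k\le g-k$, so $d_G(v_1,v_{k+1})=k$.
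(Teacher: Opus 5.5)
Your proposal is correct and uses the same arc-replacement contradiction as the paper. You are in fact more careful than the paper's one-line argument for (2): you extract a simple cycle through $e$ from the closed walk via (1), and you treat the case $e \in P$, both of which the paper glosses over.

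The one slip is the phrase ``induction on $\delta$''. The sub-pairs $(v_i,v_1)$ and $(v_2,v_j)$ need not have cycle distance smaller than $\delta$, so that induction does not close. Take the minimal counterexample with respect to $d_G(v_i,v_j)$ instead, which strictly decreases for the sub-pairs. Even simpler, the two pieces of $P$ on either side of $e$ are shortest paths that avoid $e$, so your first case applies directly to whichever sub-pair is violated.
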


\begin{proof}
\textit{(1).} The path $(v_2, \ldots, v_g, v_1)$ has length $g-1$ in
$G \setminus \{e\}$. If a shorter path from $v_1$ to $v_2$ existed in
$G \setminus \{e\}$, closing it with $e$ would yield a cycle containing $e$ of
length less than $g$, contradicting $g_e = g$.

\textit{(2).} Assume without loss of generality $i \leq j$. The two arcs of $C$
between $v_i$ and $v_j$ have lengths $|i-j|$ and $g-|i-j|$, so
$d_G(v_i,v_j) \leq \min(|i-j|, g-|i-j|)$. If a strictly shorter path $P$ existed,
then $P$ together with one of the two arcs would form a cycle of length less than $g$
containing $e$, contradicting the assumption that $C$ is a realizing cycle for $e$.

\textit{(3).} By~\textup{(2)}, $d_G(v_1, v_{k+1}) = \min(k, g-k) = k$ for every
$k \in \{1, \ldots, \lfloor g/2 \rfloor\}$.
\end{proof}

\begin{lemma}
\label{prop:overlap_path}
Let $G \in \mathcal{G}$ and let $C_1$, $C_2$ be two realizing $g$-cycles of $G$ with
$g < \infty$ such that
\[
  \{e \in E(C_1) \mid g_e = g\} \neq \{e \in E(C_2) \mid g_e = g\}.
\]
Then there exists a realizing $g$-cycle $C'_2$ such that
\[
  \{e \in E(C'_2) \mid g_e = g\} = \{e \in E(C_2) \mid g_e = g\},
\]
and $E(C_1) \cap E(C'_2)$ is a path in $G$.
\end{lemma}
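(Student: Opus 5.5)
The plan is an extremal argument. Among all realizing $g$-cycles $C'$ with $\{e \in E(C') \mid g_e = g\} = \{e \in E(C_2) \mid g_e = g\}$ (a nonempty family, since it contains $C_2$), choose $C'_2$ to maximize $|E(C_1) \cap E(C'_2)|$. Then I show by contradiction that $E(C_1) \cap E(C'_2)$ is a path.

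The intersection of two cycles is a subgraph of the cycle $C'_2$, hence a disjoint union of paths. Suppose it has at least two components. Walking along $C'_2$, there are consecutive components with a segment $Q$ of $C'_2$ between them. The segment $Q$ has endpoints $x, y \in V(C_1)$, is nonempty, and shares no edge with $C_1$. Since every edge of a length-$g$ cycle has edge-girth at most $g$, both $C_1$ and $C'_2$ are shortest cycles through one of their edges. Proposition~\ref{prop:realizing_shortest}(2) then applies to both: the distance $d_G(x,y)$ equals the shorter arc of $C_1$ between $x$ and $y$, and also the shorter arc of $C'_2$ between them. Hence $Q$, or the complementary arc of $C'_2$, is a geodesic. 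Let $P$ be a geodesic arc of $C_1$ from $x$ to $y$. Replacing the geodesic arc of $C'_2$ between $x$ and $y$ by $P$ gives a closed walk of length exactly $g$.

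Next I check that this walk is a simple cycle. If it repeated a vertex, it would split into two shorter closed walks. One of them would contain a $g$-edge of $C'_2$ and would contain a cycle of length $< g$ through that edge, which is a contradiction. (If $g$ is even and the two arcs have equal length $g/2$, I take the arc of $C'_2$ that is $Q$ itself.) In the good case the new cycle $C''$ replaces edges not in $C_1$ by edges of $C_1$. So $|E(C_1) \cap E(C'')| > |E(C_1) \cap E(C'_2)|$, which contradicts maximality, provided $C''$ keeps the same set of $g$-edges.

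The main obstacle is exactly this invariance of the $g$-edge set, in two directions.

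\emph{Edges lost.} A $g$-edge of $C_2$ lying on the removed arc would disappear. My first attempt is to always remove the arc $Q$ between two consecutive intersection components. If $Q$ contains a $g$-edge $e$ of $C_2$, I compare $Q$ with the $C_1$-arc $P$ of equal length. If $|Q| = d_G(x,y)$, then $Q \cup P$ closes into a cycle through $e$ of length $\le 2d_G(x,y) \le g$. This forces equality and a specific configuration, which I would exploit by instead removing the complementary gap, or by choosing a different pair of consecutive components. If $|Q| > d_G(x,y)$, then $P \cup Q$ is a cycle through $e$ of length $< g$, which is impossible. So $Q$ is the geodesic side whenever it carries a $g$-edge. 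The remaining subcase $|Q| = g/2$ with a $g$-edge has to be handled by selecting among the at least two gaps.

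\emph{Edges gained.} An edge of $P$ with $g_e = g$ that is not a $g$-edge of $C_2$ would be added. I would handle this by refining the extremal choice, maximizing lexicographically first the intersection size and then the overlap of $g$-edges. Alternatively I would show, using Proposition~\ref{prop:realizing_shortest}(1), that such an edge of $C_1$ lying on a geodesic between two vertices of $C'_2$ would yield a $g$-cycle equating the two $g$-edge sets in a way that contradicts the hypothesis.

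I expect this bookkeeping of which $g$-edges are preserved to be the delicate part. The length and simplicity arguments are routine consequences of Proposition~\ref{prop:realizing_shortest}.
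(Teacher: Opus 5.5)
\paragraph{Gap: preservation of the $g$-edge set is never proved.}
Your extremal setup is a valid substitute for the paper's induction on the number of shared arcs: you maximize $|E(C_1)\cap E(C_2')|$ over realizing $g$-cycles having the $g$-edge set of $C_2$. Using Proposition~\ref{prop:realizing_shortest}(2) to treat both cycles as isometric is the right input. But the step carrying the whole lemma is the one you flag yourself, and it is not proved: the swap must preserve the set of $g$-edges.

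The ``edges gained'' direction is only a list of possible strategies. A lexicographic refinement cannot help, because the contradiction needs the swapped cycle to lie back in the constrained family.

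The ``edges lost'' analysis contains a false claim. If $|Q|>d_G(x,y)$, isometry of $C_2'$ gives $|Q|=g-d_G(x,y)$, so $|P|+|Q|=g$, not $<g$. Hence a non-geodesic gap can carry $g$-edges. For instance, take $g=9$ with the following pieces:
\begin{itemize}
\item shared edges $x_1y_1$ and $x_2y_2$;
\item two internally disjoint $y_1$--$x_2$ paths $Q_1,P_1$ of length $5$;
\item paths $y_2ax_1$ and $y_2bx_1$.
\end{itemize}
Let $C_2$ be the $9$-cycle $x_1y_1Q_1x_2y_2ax_1$, let $C_1$ be $x_1y_1P_1x_2y_2bx_1$, and let $G=C_1\cup C_2$. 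The edges of $Q_1$ have edge-girth $9$, yet $d_G(y_1,x_2)=4<|Q_1|$. Removing $Q_1$ here produces neither a $9$-cycle nor the right $g$-edge set.

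Your simplicity argument is also unsecured in the tie case $|Q|=g/2$ that you propose. It needs some $g$-edge traversed exactly once in a short sub-walk, and you do not guarantee one.

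\paragraph{Missing idea: choose the gap.}
With $r\ge 2$ intersection components, $C_2'$ has $r$ gaps of total length $g-|E(C_1)\cap E(C_2')|\le g-2$. So some gap $Q$, say from $x$ to $y$, has $|Q|<g/2$; in the example it is $y_2ax_1$. Then:
\begin{itemize}
\item $d_G(x,y)=|Q|$, and $C_1$ has an $x$--$y$ arc $P$ of the same length, edge-disjoint from $Q$.
\item $P\cup Q$ is a closed walk of length $2|Q|<g$ using each of its edges once. So every edge of $P$ and of $Q$ has edge-girth $<g$, which settles both directions at once: nothing is lost and nothing is gained.
\item Simplicity is immediate. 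An interior vertex of $P$ lying on $C_2'$ lies on an $x$--$y$ geodesic, hence on $Q$, the unique shortest arc of the isometric cycle $C_2'$. So $P$ meets the rest of $C_2'$ only at $x$ and $y$.
\end{itemize}
Swapping $Q$ for $P$ therefore yields a realizing $g$-cycle with the same $g$-edge set and $|P|\ge 1$ more edges in common with $C_1$, contradicting maximality.

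This is the paper's step. It swaps the two interstice arcs between consecutive shared arcs, noting they have equal length $k$ with $2k<g$. For that inequality the interstice must be chosen short, for example the shortest one, as the example above shows.
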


\begin{proof}
We proceed by induction on the number of maximal connected components of
$E(C_1) \cap E(C_2)$, i.e.\ the number of maximal arcs shared by $C_1$ and
$C_2$ (Figure~\ref{fig_overlap_path}). We construct $C'_2$ by induction and
show that $\{e \in E(C'_2) \mid g_e = g\}$ remains equal to $\{e \in E(C_2)
\mid g_e = g\}$ throughout.

If $E(C_1) \cap E(C_2)$ is empty or a single arc, it is already a path. Otherwise, there exist two distinct
shared arcs separated by arcs along which $C_1$ and $C_2$ differ. Let $e =
\{u_1, u_2\}$ and $e' = \{u'_1, u'_2\}$ be edges in two such consecutive
shared arcs, with
\begin{align*}
  C_1 &= (\ldots,\, u_1,\, u_2,\, v_1,\, \ldots,\, v_k,\, u'_1,\, u'_2,\, \ldots), \\
  C_2 &= (\ldots,\, u_1,\, u_2,\, v'_1,\, \ldots,\, v'_{k'},\, u'_1,\, u'_2,\, \ldots),
\end{align*}
with $(v_1, \ldots, v_k) \neq (v'_1, \ldots, v'_{k'})$. 
Both arcs are shortest paths between $u_2$ and $u'_1$ in $G$ by
Proposition~\ref{prop:realizing_shortest}(2), hence $k = k'$. Since $C_1$
and $C_2$ share at least one other arc, $k < \lfloor g/2 \rfloor$,
and every edge on either arc has edge-girth strictly less than $g$, as it
lies on the cycle of length $2k < g$ formed by concatenating the two arcs.
Replacing $(v'_1, \ldots, v'_{k'})$ in $C_2$ by $(v_1, \ldots, v_k)$
therefore yields a new cycle of length $g$ with one fewer interstice
between shared arcs, and the same set of edges of edge-girth $g$ as $C_2$.
The induction hypothesis applied to this new cycle yields $C'_2$
satisfying both required properties.
\end{proof}

\begin{figure}[H]
\centering
\begin{tikzpicture}[
  every node/.style={circle, draw, fill=white, inner sep=2pt, minimum size=18pt},
  lbl/.style={draw=none, fill=none},
]

\node (A) at (0, -0.5)  {};
\node (B) at (2, -0.5)  {$u'_2$};
\node (C) at (4, -0.5)  {};
\node (D) at (4, 4.5)  {};
\node (E) at (2, 4.5)  {$u_1$};
\node (F) at (0, 4.5)  {};
\node (G) at (2, 3)  {$u_2$};
\node (H) at (1, 2)  {};
\node (I) at (3, 2)  {};
\node (J) at (2, 1)  {$u'_1$};

\draw[blue, dotted] (A) -- (B);
\draw[blue, dotted] (J) -- (H) -- (G) -- (E) -- (F) -- (A);
\draw[red, dashed] (B) -- (C) -- (D) -- (E);
\draw[red, dashed] (G) -- (I) -- (J) -- (B);
\draw[violet, thick] (E) -- (G);
\draw[violet, thick] (J) -- (B);

\draw[->, thick] (5, 2) -- (6, 2)
  node[draw=none, fill=none, midway, above] {};

\node (A') at (7, -0.5)  {};
\node (B') at (9, -0.5)  {$u'_2$};
\node (C') at (11, -0.5)  {};
\node (D') at (11, 4.5)  {};
\node (E') at (9, 4.5)  {$u_1$};
\node (F') at (7, 4.5)  {};
\node (G') at (9, 3)  {$u_2$};
\node (H') at (8, 2)  {};
\node (I') at (10, 2)  {};
\node (J') at (9, 1)  {$u'_1$};

\draw[blue, dotted] (A') -- (B');
\draw[blue, dotted] (G') -- (E') -- (F') -- (A');
\draw[red, dashed] (B') -- (C') -- (D') -- (E');
\draw[red, dashed] (J') -- (B');
\draw[violet, thick] (E') -- (G');
\draw[violet, thick] (J') -- (B');
\draw[black, thin] (G') -- (I');
\draw[black, thin] (J') -- (I');
\draw[violet, thick] (G') -- (H');
\draw[violet, thick] (H') -- (J');

\end{tikzpicture}
\caption{Illustration of the arc replacement step in the proof of
Lemma~\ref{prop:overlap_path}. Edges of $C_1$ are shown dotted (blue),
edges of $C_2$ are shown dashed (red), and shared edges are shown solid
(violet). The arc of $C_2$ between $u_2$ and $u'_1$ (dashed, red) is
replaced by the corresponding arc of $C_1$ (dotted, blue), merging two
shared arcs into one and reducing the number of interstices by one.%
\label{fig_overlap_path}}
\end{figure}

\begin{proposition}
\label{prop:overlap}
Let $G \in \mathcal{G}$ consist of two distinct realizing cycles $C_1$ and $C_2$,
both of length $g < \infty$. Then
\[
  |E(C_1) \cap E(C_2)| \leq \lfloor g/2 \rfloor.
\]
\end{proposition}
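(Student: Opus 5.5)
The plan is to read ``distinct'' in the sense of Lemma~\ref{prop:overlap_path}: the sets $\{e \in E(C_i) \mid g_e = g\}$ differ for $i=1,2$. Some such reading is needed. If ``distinct'' only meant different edge sets, the theta graph with three internally disjoint paths of lengths $3,2,2$ between two vertices gives a counterexample for $g=5$. Its two $5$-cycles both realize the edge-girth of every edge on the length-$3$ path, yet they share $3 > \lfloor 5/2 \rfloor$ edges. So I argue by contradiction: assume $|E(C_1)\cap E(C_2)| > \lfloor g/2\rfloor$ and derive that the two sets of girth-$g$ edges coincide.

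\textbf{Step 1: reduce to a path intersection.} Apply Lemma~\ref{prop:overlap_path} to replace $C_2$ by a realizing $g$-cycle $C'_2$ of $G$ with the same girth-$g$ edges and with $E(C_1)\cap E(C'_2)$ a single path $P$. The replacement step in that lemma only swaps an arc of $C_2$ for an equally long arc of $C_1$, so it never removes shared edges; hence $P$ still has $s > \lfloor g/2\rfloor$ edges. Since $G$ is just $C_1 \cup C_2$, either the intersection was already a path, or the swap makes $C'_2$ follow $C_1$ along the swapped arcs, which only enlarges the overlap. Either way we may assume $E(C_1)\cap E(C_2)=P$, a path from $x$ to $y$ of length $s$.

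\textbf{Step 2: use the third cycle of the theta graph.} Now $C_1\setminus P$ and $C_2\setminus P$ are two internally disjoint $x$--$y$ paths, each of length $g-s$. Together they form a cycle $C_3$ of length $2(g-s)$. Since $s>\lfloor g/2\rfloor$ gives $s > g/2$, we get $2(g-s)<g$. Every edge of $C_1$ or $C_2$ outside $P$ lies on $C_3$, so its edge-girth is strictly less than $g$.

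\textbf{Step 3: conclude.} By Step 2, every girth-$g$ edge of $C_1$ or of $C_2$ lies in $P \subseteq E(C_1)\cap E(C_2)$. Hence
\[
  \{e\in E(C_1)\mid g_e=g\} \;=\; E(P)\cap\{e \mid g_e=g\} \;=\; \{e\in E(C_2)\mid g_e=g\},
\]
contradicting distinctness. Therefore $s\le\lfloor g/2\rfloor$.

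The main obstacle is Step 1. I must check that the arc swaps in Lemma~\ref{prop:overlap_path} do not shrink the overlap, and that they preserve both the girth-$g$ sets and the standing assumption that $G = C_1\cup C_2$, so that the bound for $C'_2$ transfers back to $C_2$. If that bookkeeping fails, I would instead argue directly that each interstice between shared arcs closes a cycle of length at most $g-s<g/2$, and run the Step 2 argument on that cycle.
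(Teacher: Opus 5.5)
Your proof is correct and has the same skeleton as the paper's. You argue by contradiction, use Lemma~\ref{prop:overlap_path} to make the overlap a single path $P$, and observe that the two complementary arcs of length $g-s<g/2$ close up into something shorter than $g$. The difference lies in the hypothesis, and hence in how the contradiction is closed.

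The paper's proof simply asserts that every edge of $G$ has edge-girth $g$, reading ``realizing'' as realizing for all of its edges. Under that reading the lemma returns $C_2$ itself, and the short cycle through girth-$g$ edges is the contradiction. You instead read ``distinct'' as in the lemma and show that the two girth-$g$ sets coincide. Your theta-graph counterexample to the literal reading is correct. Your version is also the more general one. Under the paper's reading, distinct cycles automatically have distinct girth-$g$ sets, so your statement implies the paper's. And your reading is exactly what Proposition~\ref{prop:two-cycles} invokes, where not every edge has edge-girth $g$.

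Two small repairs are needed.
\begin{itemize}
\item In Step~2, the arcs $C_1\setminus P$ and $C_2\setminus P$ need not be internally disjoint, even when the overlap is exactly a path. Take $g=9$ and $s=5$: two complementary arcs of length $4$ may meet at their common midpoint. Both $9$-cycles still realize the edge-girth of every edge of $P$, and $C_3$ is then a figure-eight of two $4$-cycles. The paper's proof makes the same slip.
\item Step~1 relies on the internals of the lemma's proof, namely that the overlap only grows. The lemma's statement does not provide this.
\end{itemize}

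Both issues disappear, and the lemma becomes unnecessary, once you use the symmetric difference $E(C_1)\,\triangle\,E(C_2)$. It has all degrees even and only $2(g-s)<g$ edges, so each of its edges lies on a cycle of length less than $g$. Hence every girth-$g$ edge of $C_1$ or $C_2$ lies in $E(C_1)\cap E(C_2)$, which gives your Step~3 directly.
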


\begin{proof}
Suppose for contradiction that $|E(C_1) \cap E(C_2)| > \lfloor g/2 \rfloor$.
Set $E_1 = E(C_1) \setminus E(C_2)$, $E_2 = E(C_2) \setminus E(C_1)$, and
$E_{12} = E(C_1) \cap E(C_2)$. Since $|E_1| = g - |E_{12}|$ and $|E_2| = g -
|E_{12}|$, the hypothesis $|E_{12}| > \lfloor g/2 \rfloor$ gives
$|E_1|, |E_2| < \lceil g/2 \rceil$.
Since every edge of $G$ has edge-girth $g$, Lemma~\ref{prop:overlap_path}
yields that $E_{12}$ is a path in $G$. Hence $E_1$ and $E_2$ are also paths
sharing the same endpoints. Therefore $E_1 \cup E_2$ forms a cycle of length
$|E_1| + |E_2| \leq 2\lceil g/2 \rceil - 2 < g$, a contradiction.
\end{proof}


\section{Structural Operations and Their Effect on Edge-Girth}
\label{sec:operations}

In this section, we introduce elementary graph operations and
study how each affects the edge-girth sequence. These operations serve as
building blocks for the procedures introduced in
Section~\ref{sec:induction}, and are used throughout the proofs of
Sections~\ref{sec:base} and~\ref{sec:induction}.

\begin{definition}[Vertex identification]
\label{def:vertex-id}
Let $G_1, G_2 \in \mathcal{G}$ have disjoint vertex sets, and let $v_i \in V(G_i)$
for $i = 1, 2$. The \emph{vertex identification} of $G_1$ and $G_2$ at $(v_1, v_2)$ (Figure~\ref{fig_vertex-id}) is the graph $H = G_1 \underset{v_1,v_2}{\oplus} G_2$ with
\[
  V(H) = \bigl(V(G_1) \cup V(G_2)\bigr) \setminus \{v_2\}
\]
and
\[
  E(H) = E(G_1) \;\cup\; \bigl\{\{v_1, v\} \mid \{v_2, v\} \in E(G_2)\bigr\}
         \;\cup\; \bigl(E(G_2) \setminus \{\{v_2,v\} \mid v \in V(G_2)\}\bigr).
\]
\end{definition}

\begin{figure}[H]
\centering
\begin{tikzpicture}[
  every node/.style={circle, draw, fill=white, inner sep=2pt, minimum size=18pt},
  lbl/.style={draw=none, fill=none},
]

\node (a1) at (0, 1)  {};
\node (a2) at (0, -1) {};
\node[fill=blue!20] (v1) at (1, 0)  {$v_1$};
\draw[] (a1) -- node[draw=none, fill=none, above] {$3$} (v1);
\draw[] (a2) -- node[draw=none, fill=none, below] {$3$} (v1);
\draw[] (a1) -- node[draw=none, fill=none, left] {$3$} (a2);
\node[draw=none, fill=none] at (0.2, 1.8) {$G_1$};
\node[draw=none, fill=none, below] at (0.2, -.5) {$\sigma(G_1) = (3^{(3)})$};

\node[fill=blue!20] (v2) at (3.5, 0) {$v_2$};
\node (b1) at (4.5,  1) {};
\node (b2) at (4.5, -1) {};
\draw[] (v2) -- node[draw=none, fill=none, above] {$3$} (b1);
\draw[] (v2) -- node[draw=none, fill=none, below] {$3$} (b2);
\draw[] (b1) -- node[draw=none, fill=none, right] {$3$} (b2);
\node[draw=none, fill=none] at (4.3, 1.8) {$G_2$};
\node[draw=none, fill=none, below] at (4.3, -.5) {$\sigma(G_2) = (3^{(3)})$};
\draw[->, thick] (5.2, 0) -- (6.2, 0)
  node[draw=none, fill=none, midway, above] {$\oplus$};

\node (c1) at (7,  1)  {};
\node (c2) at (7, -1)  {};
\node[fill=blue!20] (v) at (8, 0) {$v_1$};
\node (d1) at (9,  1)  {};
\node (d2) at (9, -1)  {};
\draw[] (c1) -- node[draw=none, fill=none, above] {$3$} (v);
\draw[] (c2) -- node[draw=none, fill=none, below] {$3$} (v);
\draw[] (c1) -- node[draw=none, fill=none, left] {$3$} (c2);
\draw[] (v) -- node[draw=none, fill=none, above] {$3$} (d1);
\draw[] (v) -- node[draw=none, fill=none, below] {$3$} (d2);
\draw[] (d1) -- node[draw=none, fill=none, right] {$3$} (d2);
\node[draw=none, fill=none] at (8, 1.8) {$H = G_1 \underset{v_1,v_2}{\oplus} G_2$};
\node[draw=none, fill=none, below] at (8, 0.2) {$\sigma(H) = \sigma(G_1) \uplus \sigma(G_2)$};

\end{tikzpicture}
\caption{Vertex identification of $G_1$ and $G_2$ at $(v_1, v_2)$.%
\label{fig_vertex-id}}
\end{figure}

\begin{proposition}
\label{prop:concat}
For all $G_1, G_2 \in \mathcal{G}$ and all $v_i \in V(G_i)$,
\[
  \sigma\!\left(G_1 \underset{v_1,v_2}{\oplus} G_2\right) = \sigma(G_1) \uplus \sigma(G_2),
\]
In particular, $\mathcal{S}$ is closed under $\uplus$.
\end{proposition}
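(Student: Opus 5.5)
The plan is to show that every edge keeps its edge-girth when passing from $G_i$ to $H = G_1 \underset{v_1,v_2}{\oplus} G_2$. Since $E(H)$ is in natural bijection with $E(G_1) \sqcup E(G_2)$, this gives the multiset identity at once. First I would record the bijection $\phi$. It is the identity on $E(G_1)$. It sends $\{v_2,v\}\in E(G_2)$ to $\{v_1,v\}$ and is the identity on the other edges of $G_2$. Simplicity is preserved because $V(G_1)\cap V(G_2)=\emptyset$, so no edge of $G_2$ collapses onto an edge of $G_1$. Connectivity of $H$ follows because $G_1$ and $G_2$ are connected and share the vertex $v_1$.

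The key structural fact is that $v_1$ is a cut vertex of $H$, or at least separates $V(G_1)\setminus\{v_1\}$ from $V(G_2)\setminus\{v_2\}$: no edge of $H$ joins these two sets. From this I will argue that every simple cycle $C$ of $H$ has all its edges in $\phi(E(G_1))$ or all in $\phi(E(G_2))$. Suppose $C$ used edges from both sides. Traversing $C$, it must cross between the sides, and every crossing passes through $v_1$. A closed walk that leaves one side and returns needs at least two passages through $v_1$. A simple cycle visits $v_1$ only once, and the two cycle edges at $v_1$ would have to lie on different sides. Then the rest of the cycle would be a path from a vertex of $G_1\setminus v_1$ to a vertex of $G_2\setminus v_2$ that avoids $v_1$, which is impossible. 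Making this step rigorous, without hand-waving, is the only real content of the proof, and I expect it to be the main (mild) obstacle.

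Given that, the simple cycles of $H$ are exactly the images under $\phi$ of simple cycles of $G_1$ and of $G_2$, and $\phi$ preserves lengths. The restriction of $H$ to the $G_2$ side is isomorphic to $G_2$ via renaming $v_2\mapsto v_1$. Hence for $e\in E(G_i)$ the cycles through $\phi(e)$ in $H$ correspond bijectively and length-preservingly to the cycles through $e$ in $G_i$. This gives $g_{\phi(e),H}=g_{e,G_i}$, including the case $\infty$ when no cycle exists. Summing multiplicities over each value gives $\sigma(H)=\sigma(G_1)\uplus\sigma(G_2)$. For closure of $\mathcal{S}$ under $\uplus$, given $S_1,S_2\in\mathcal{S}$ I pick realizations with disjoint vertex sets, renaming if necessary, and any vertices $v_i$. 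The empty-sequence case is trivial: either use an edgeless single-vertex graph, or note that $S\uplus\emptyset=S$.
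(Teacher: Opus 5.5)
Your proposal is correct and follows the same route as the paper: it shows that every cycle of $H$ lies entirely within the copy of $G_1$ or entirely within the copy of $G_2$, so each edge keeps its edge-girth, and then sums multiplicities. Where the paper simply asserts that joining at a single vertex creates no new cycle, you make the edge bijection and the cut-vertex argument explicit.
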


\begin{proof}
Let $H = G_1 \underset{v_1,v_2}{\oplus} G_2$. Since $G_1$ and $G_2$ are joined at a
single vertex, no new cycle is created, and every cycle of $H$ lies entirely within $G_1$ or
entirely within $G_2$. Therefore $g_{e, H} = g_e^{(G_1)}$ for every $e \in E(G_1)$,
and $g_{e, H} = g_e^{(G_2)}$ for every $e \in E(G_2)$, which gives
$\sigma(H) = \sigma(G_1) \uplus \sigma(G_2)$.
\end{proof}

\begin{definition}[Edge subdivision]
\label{def:edge-subdivision}
Let $G \in \mathcal{G}$, $\{v_1, v_2\} \in E(G)$, and $c \geq 1$. The
\emph{$c$-subdivision} of $\{v_1, v_2\}$ in $G$ (Figure~\ref{fig_edge-subdivision}) is the graph $H$ obtained by
replacing $\{v_1, v_2\}$ with a path $(v_1, u_1, \ldots, u_c, v_2)$ of length
$c+1$, where $u_1, \ldots, u_c$ are $c$ new vertices. Formally,
\[
  V(H) = V(G) \cup \{u_1, \ldots, u_c\},
\]
\[
  E(H) = \bigl(E(G) \setminus \{\{v_1,v_2\}\}\bigr)
         \;\cup\; \bigl\{\{v_1,u_1\},\{u_1,u_2\},\ldots,\{u_{c-1},u_c\},\{u_c,v_2\}\bigr\}.
\]
\end{definition}

\begin{figure}[H]
\centering
\begin{tikzpicture}[
  every node/.style={circle, draw, fill=white, inner sep=2pt, minimum size=18pt},
  lbl/.style={draw=none, fill=none},
]

\begin{scope}[xshift=0cm]
  \node (t1) at (0, 1.5)    {};
  \node (t2) at (0, 0)      {};
  \node (t3) at (1.5, 0.75) {};
  \node (q1) at (-1.5, 1.5) {};
  \node (q2) at (-1.5, 0)   {};

  \draw[thick, blue]   (t2) -- node[lbl, below, blue]{$3$} (t3);
  \draw[thick, blue]   (t1) -- node[lbl, above, blue]{$3$} (t3);
  \draw[thick, blue] (t1) -- node[lbl, right, blue]{$3$} (t2);
  \draw[thick, dashed, red]    (t1) -- node[lbl, above, red]{$4$}  (q1);
  \draw[thick, dashed, red]    (q1) -- node[lbl, left,  red]{$4$}  (q2);
  \draw[thick, dashed, red]    (q2) -- node[lbl, below, red]{$4$}  (t2);

  \node[lbl] at (q1) {$v_1$};
  \node[lbl] at (q2) {$v_2$};
  \node[lbl] at (0, -1) {$G$};
\end{scope}

\draw[->, thick] (2.5, 0.75) -- (3.7, 0.75)
  node[lbl, midway, above, font=\small]{subdivide $\{ v_1, v_2 \}$ ($c=2$)};

\begin{scope}[xshift=8.5cm]
  \node (t1) at (0, 1.5)    {};
  \node (t2) at (0, 0)      {};
  \node (t3) at (1.5, 0.75) {};
  \node (q1) at (-1.5, 1.5) {};
  \node (q2) at (-1.5, 0)   {};
  \node (n1) at (-3, 0)   {};
  \node (n2) at (-3, 1.5)   {};

  \draw[thick, blue]   (t2) -- node[lbl, below, blue]{$3$} (t3);
  \draw[thick, blue]   (t1) -- node[lbl, above, blue]{$3$} (t3);
  \draw[thick, blue] (t1) -- node[lbl, right, blue]{$3$} (t2);
  \draw[thick, dashed, violet]    (t1) -- node[lbl, above, violet]{$6$}  (q1);
  \draw[thick, dashed, violet]    (q2) -- node[lbl, below, violet]{$6$}  (t2);
  
  \draw[thick, dotted, black]    (n1) -- node[lbl, below, black]{$6$}  (q2);
  \draw[thick, dotted, black]    (n2) -- node[lbl, above, black]{$6$}  (q1);
  \draw[thick, dotted, black]    (n1) -- node[lbl, left, black]{$6$}  (n2);

  \node[lbl] at (q1) {$v_1$};
  \node[lbl] at (q2) {$v_2$};
  \node[lbl] at (n1) {$u_2$};
  \node[lbl] at (n2) {$u_1$};
  \node[lbl] at (0, -1) {$G$};
\end{scope}

\end{tikzpicture}
\caption{Subdivision of edge $\{v_1,v_2\}$. Two new vertices
$u_1, u_2$ are inserted.
Cycles avoiding $\{v_1,v_2\}$ (solid edges, $g_e = 3$) are unaffected, while
cycles passing through it (dashed edges, $g_e = 4$) increase
in length by $c = 2$.%
\label{fig_edge-subdivision}}
\end{figure}

\begin{proposition}
\label{prop:subdivision-girth}
Let $G \in \mathcal{G}$, $\{v_1, v_2\} \in E(G)$, and $c \geq 1$. Let $H$ be the
$c$-subdivision of $\{v_1, v_2\}$ in $G$, with new vertices $u_1, \ldots, u_c$.
\begin{enumerate}
  \item[\textup{(1)}] For every $e \in E(G) \setminus \{\{v_1,v_2\}\}$, if there
    exists a realizing cycle for $e$ in $G$ that does not pass through $\{v_1,v_2\}$, then
    $g_{e, H} = g_{e, G}$.
  \item[\textup{(2)}] For every new edge $\{u_i, u_{i+1}\} \in E(H)$, we have
    $g_{\{u_i,u_{i+1}\}, H} = g_{\{v_1,v_2\}, G} + c$.
\end{enumerate}
\end{proposition}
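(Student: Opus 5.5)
The plan is to set up an explicit correspondence between cycles of $G$ and cycles of $H$, and then compare lengths. Cycles of $H$ fall into two kinds. Those that avoid every new vertex $u_1,\ldots,u_c$ are exactly the cycles of $G$ that avoid $\{v_1,v_2\}$, with the same length. A cycle of $H$ that uses some new vertex must traverse the whole path $(v_1,u_1,\ldots,u_c,v_2)$, since each $u_i$ has degree $2$ in $H$. Contracting this path back to the edge $\{v_1,v_2\}$ therefore gives a cycle of $G$ through $\{v_1,v_2\}$ of length exactly $c$ less. Conversely, every cycle of $G$ through $\{v_1,v_2\}$ expands to a cycle of $H$ through all new edges, of length $c$ more. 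The correspondence is a bijection: cycles of $G$ avoiding $\{v_1,v_2\}$ correspond to cycles of $H$ avoiding the new path with the same length, and cycles of $G$ through $\{v_1,v_2\}$ correspond to cycles of $H$ containing the path with length $+c$. Simplicity of the cycles is preserved, since the new vertices are distinct from $V(G)$.

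\textit{(1).} Let $e\in E(G)\setminus\{\{v_1,v_2\}\}$, and let $C$ be a realizing cycle for $e$ in $G$ avoiding $\{v_1,v_2\}$. Then $C$ is also a cycle of $H$ containing $e$, so $g_{e,H}\le g_{e,G}$. Conversely, any cycle of $H$ through $e$ corresponds to a cycle of $G$ through $e$ whose length is equal or smaller by $c$. This gives $g_{e,G}\le g_{e,H}$, hence equality. The same argument shows $g_{e,H}=\infty$ exactly when $g_{e,G}=\infty$, although the hypothesis already forces $g_{e,G}<\infty$.

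\textit{(2).} A cycle of $H$ through a new edge $\{u_i,u_{i+1}\}$, or by the same reasoning through any edge of the new path, uses the whole path. It therefore corresponds bijectively to a cycle of $G$ through $\{v_1,v_2\}$ with length shifted by $c$. Taking minima gives $g_{\{u_i,u_{i+1}\},H}=g_{\{v_1,v_2\},G}+c$, with the convention $\infty+c=\infty$ when $\{v_1,v_2\}$ is a bridge.

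The only delicate point is the degree-$2$ argument: a cycle entering the path must run through all of it and cannot turn back. This holds because each internal vertex has exactly two neighbours, both on the path. Everything else is bookkeeping.
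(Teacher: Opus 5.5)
Your proof is correct and follows essentially the same route as the paper: cycles of $G$ avoiding $\{v_1,v_2\}$ are preserved, and cycles through it are lengthened by exactly $c$. Your degree-$2$ argument makes explicit the converse direction that the paper only asserts ("$H$ contains no cycle shorter than any cycle of $G$", "must use the path"). The one cosmetic difference is in (2): the paper computes the minimum via $d_{G\setminus\{v_1,v_2\}}(v_1,v_2)=g_{\{v_1,v_2\},G}-1$ from Proposition~\ref{prop:realizing_shortest}(1), whereas you take minima directly over the cycle bijection, which also covers the bridge case.
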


\begin{proof}
The subdivision replaces $\{v_1,v_2\}$ by a path of length $c+1$, so every cycle
of $G$ passing through $\{v_1,v_2\}$ becomes a cycle of $H$ of length increased
by $c$, while every cycle of $G$ avoiding $\{v_1,v_2\}$ is preserved unchanged
in $H$.

\textit{(1).} If $e$ has a realizing cycle $C$ in $G$ avoiding $\{v_1,v_2\}$, then
$C$ is preserved in $H$ with the same length, so $g_{e, H} \leq g_{e, G}$. Since
$H$ contains no cycle shorter than any cycle of $G$, we have $g_{e, H} = g_{e, G}$.

\textit{(2).} Every cycle containing a new edge $\{u_i,u_{i+1}\}$ must use the
path $(v_1,u_1,\ldots,u_c,v_2)$ and return via a path between $v_1$ and $v_2$ in
$G$. The shortest such return path has length $g_{\{v_1,v_2\}, G} - 1$
(by Proposition~\ref{prop:realizing_shortest}(1)), giving a shortest cycle of
length $(c+1) + (g_{\{v_1,v_2\}, G} - 1) = g_{\{v_1,v_2\}, G} + c$.
\end{proof}

\begin{definition}[Edge contraction]
\label{def:edge-contraction}
Let $G \in \mathcal{G}$ and $\{v_1, v_2\} \in E(G)$. The \emph{contraction} of
$\{v_1, v_2\}$ in $G$ (Figure~\ref{fig_edge-contraction}) is the graph $H = G / \{v_1, v_2\}$ with
\[
  V(H) = V(G) \setminus \{v_2\}
\]
and
\[
  E(H) = \bigl(E(G) \setminus \{\{v_2, v\} \mid v \in V(G)\}\bigr)
         \;\cup\; \bigl\{\{v_1, v\} \mid \{v_2, v\} \in E(G),\ v \neq v_1\bigr\}.
\]
That is, $v_1$ and $v_2$ are merged into a single vertex $v_1$, and every edge
formerly incident to $v_2$ is redirected to $v_1$.
\end{definition}

\begin{figure}[H]
\centering
\begin{tikzpicture}[
  every node/.style={circle, draw, fill=white, inner sep=2pt, minimum size=18pt},
  lbl/.style={draw=none, fill=none},
]

\begin{scope}[xshift=0cm]
  \node (v1) at (-1, 0)   {$v_1$};
  \node (v2) at (1, 0)   {$v_2$};
  \node (a)  at (-2, 1){};
  \node (b)  at (-2,-1){};
  \node (c)  at (2, 1) {};
  \node (d)  at (2,-1) {};

  \draw[thick, red, dashed]    (v1) -- node[lbl, above, red]{$e$} (v2);
  \draw[thick] (a)  -- (v1);
  \draw[thick] (b)  -- (v1);
  \draw[thick] (v2) -- (c);
  \draw[thick] (v2) -- (d);
  \draw[thick] (a)  -- (b);
  \draw[thick] (c)  -- (d);
\end{scope}

\draw[->, thick] (2.2, 0) -- (5.2, 0)
  node[lbl, midway, above, font=\small] {$e$ contraction};

\begin{scope}[xshift=6.5cm]
  \node (v) at (0.25, 0) {$v_1$};
  \node (a) at (-1, 1) {};
  \node (b) at (-1,-1) {};
  \node (c) at (1.5, 1){};
  \node (d) at (1.5,-1){};

  \draw[thick] (a) -- (v);
  \draw[thick] (b) -- (v);
  \draw[thick] (v) -- (c);
  \draw[thick] (v) -- (d);
  \draw[thick] (a) -- (b);
  \draw[thick] (c) -- (d);
\end{scope}

\end{tikzpicture}
\caption{Contraction of edge $e = \{v_1,v_2\}$ (dashed). The two endpoints merge
into a single vertex $v_1$, all edges formerly incident to $v_2$ are
redirected to $v_1$.%
\label{fig_edge-contraction}}
\end{figure}

\begin{proposition}
\label{prop:contract-girth}
Let $G \in \mathcal{G}$ with $\sigma(G) = S$, and let
$E_{\max} = \{ e \in E(G) \mid g_e = \max(S) \}$.
For any $\{v,v'\} \in E_{\max}$, contracting $\{v,v'\}$ does not affect $g_e$
for any $e \in E(G) \setminus E_{\max}$.
\end{proposition}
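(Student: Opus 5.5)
The plan is to set $g := \max(S)$, fix $f = \{v,v'\} \in E_{\max}$ and $e \in E(G)\setminus E_{\max}$, so that $g_e < g$, and write $H = G/f$. The whole argument rests on one observation: any cycle of $G$ through $f$ has length at least $g_f = g$. So any short structure involving $f$ would contradict $g_e < g$. I would first dispose of degenerate cases.

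If $g = \infty$, then $f$ is a bridge. Every cycle of $H$ through the merged vertex $v$ would have to use one edge from each side of the bridge and then reconnect the two sides avoiding $v$, which is impossible. Hence the cycles of $H$ are exactly the cycles of $G$ avoiding $f$, and all edge-girths outside $f$ are unchanged.

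If $v$ and $v'$ have a common neighbour, then $g_f = 3 = g$, so every edge has edge-girth $3$ and $E(G)\setminus E_{\max}$ is empty. The statement is then vacuous. This case also covers the only situation in which the contraction could create parallel edges that the definition silently merges. So from now on $g$ is finite and $H$ has exactly $|E(G)|-1$ edges, in bijection with $E(G)\setminus\{f\}$.

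\emph{Upper bound $g_{e,H} \le g_e$.} Let $C$ be a realizing cycle for $e$ in $G$. First, $f \notin E(C)$, since otherwise $g = g_f \le |C| = g_e$. Second, $C$ cannot contain both $v$ and $v'$. If it did, they would be non-adjacent along $C$, and $f$ together with the shorter arc of $C$ between them would give a cycle through $f$ of length at most $\lfloor |C|/2\rfloor + 1 < |C| = g_e < g$ (using $|C|\ge 3$). That contradicts $g_f = g$. Hence $C$ meets $\{v,v'\}$ in at most one vertex, so it maps to a cycle of $H$ of the same length containing $e$.

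\emph{Lower bound $g_{e,H} \ge g_e$.} Suppose $C'$ is a cycle of $H$ containing $e$ with $|C'| < g_e$. If $C'$ avoids the merged vertex, or if both of its edges at the merged vertex come from the same endpoint in $G$, then $C'$ is already a cycle of $G$, contradicting the definition of $g_e$. Otherwise, one edge at the merged vertex comes from $v$ and the other from $v'$. Reinserting $f$ then yields a simple cycle of $G$ containing both $f$ and $e$ of length $|C'|+1 \le g_e < g$, contradicting $g_f = g$.

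The main obstacle I anticipate is the case analysis for lifting cycles of $H$ back to $G$. One must check that the lifted object is genuinely a simple cycle, which holds because the merged vertex appears only once in $C'$. One must also make sure the simple-graph convention of the contraction (merging parallel edges) causes no trouble, which is exactly why the common-neighbour case is isolated first.
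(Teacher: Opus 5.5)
Your proof is correct and uses the same idea as the paper's: every cycle through $f$ has length at least $\max(S) > g_e$, so realizing cycles of $e$ avoid $f$, and cycles shortened by the contraction cannot go below $g_e$. It is also more complete than the paper's short argument, which only notes that realizing cycles of $e$ avoid $f$; you additionally rule out a realizing cycle through both $v$ and $v'$ (which the contraction would pinch), prove the reverse inequality explicitly by lifting short cycles of $G/f$ to cycles through $f$, and isolate the common-neighbour case in which the contraction would merge parallel edges.
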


\begin{proof}
Contracting $\{v,v'\}$ decreases by $1$ the length of every cycle passing through
$\{v,v'\}$. For any $e \in E(G) \setminus E_{\max}$, every cycle containing $e$
has length $g_e < \max(S)$, and such a cycle cannot contain an edge of $E_{\max}$. Hence no shortest cycle for $e$
passes through $\{v,v'\}$, and $g_e$ is unchanged.
\end{proof}

\begin{definition}[Edge removal]
\label{def:edge-removal}
Let $G \in \mathcal{G}$ and $F \subseteq E(G)$. The graph obtained by
\emph{removing} $F$ from $G$ (Figure~\ref{fig_edge-removal}) is the subgraph $G \setminus F$ with vertex set
$V(G)$ and edge set $E(G) \setminus F$. If $G \setminus F$ is disconnected,
its connected components may be rejoined by successive vertex identifications
(Definition~\ref{def:vertex-id}) to produce a graph in $\mathcal{G}$.
\end{definition}

\begin{figure}[H]
\centering
\begin{tikzpicture}[
  every node/.style={circle, draw, fill=white, inner sep=2pt, minimum size=18pt},
  lbl/.style={draw=none, fill=none},
]

\begin{scope}[xshift=0cm]
  \node (t1) at (0, 1.5)    {};
  \node (t2) at (0, 0)      {};
  \node (t3) at (1.5, 0.75) {};
  \node (q1) at (-1.5, 1.5) {};
  \node (q2) at (-1.5, 0)   {};

  \draw[thick, blue] (t1) -- node[lbl, left,  blue]{$3$} (t2);
  \draw[thick, blue] (t2) -- node[lbl, below, blue]{$3$} (t3);
  \draw[thick, blue] (t1) -- node[lbl, above, blue]{$3$} (t3);
  \draw[thick, dashed, red]  (t1) -- node[lbl, above, red]{$4$}  (q1);
  \draw[thick, dashed, red]  (q1) -- node[lbl, left,  red]{$4$}  (q2);
  \draw[thick, dashed, red]  (q2) -- node[lbl, below, red]{$4$}  (t2);

  \node[lbl] at (0, -1) {$G$};
\end{scope}

\draw[->, thick] (2.3, 0.75) -- (3.5, 0.75)
  node[lbl, midway, above, font=\small]{remove $E_{\max}$};

\begin{scope}[xshift=5cm]
  \node (t1) at (0, 1.5)    {};
  \node (t2) at (0, 0)      {};
  \node (t3) at (1.5, 0.75) {};
  \draw[thick, blue] (t1) -- node[lbl, left,  blue]{$3$} (t2);
  \draw[thick, blue] (t2) -- node[lbl, below, blue]{$3$} (t3);
  \draw[thick, blue] (t1) -- node[lbl, above, blue]{$3$} (t3);

  \node[lbl] at (0, -1.4) {$G \setminus E_{\max}$};
\end{scope}

\end{tikzpicture}
\caption{Removing $E_{\max}$ (dashed edges, $g_e = 4$) from $G$. The solid edges ($g_e = 3$) are unaffected.
\label{fig_edge-removal}}
\end{figure}

\begin{proposition}
\label{prop:remove-girth}
Let $G \in \mathcal{G}$ with $\sigma(G) = S$, and let
$E_{\max} = \{ e \in E(G) \mid g_e = \max(S) \}$.
Removing $F \subseteq E_{\max}$ from $G$ and reconnecting any resulting components by
successive vertex identifications does not affect $g_e$ for any
$e \in E(G) \setminus E_{\max}$.
\end{proposition}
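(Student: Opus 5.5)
The argument runs parallel to the proof of Proposition~\ref{prop:contract-girth}. Fix $e \in E(G) \setminus E_{\max}$. If $g_e = \infty$ then $\max(S) = \infty$ and $e \notin E_{\max}$ is impossible, so $g_e < \max(S)$ is finite. The proof has two parts: an upper bound showing $g_e$ cannot increase, and a lower bound showing it cannot decrease.

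For the upper bound, let $C$ be a realizing cycle for $e$ in $G$. Every edge $f \in E(C)$ satisfies $g_f \leq |C| = g_e < \max(S)$, since $C$ is a cycle through $f$. Hence $C$ contains no edge of $E_{\max}$, and in particular none of $F$. So $C$ survives intact in $G \setminus F$, and $e$ lies on a cycle of length $g_e$ there. Vertex identifications only add the edges of another component and glue at a single vertex. Thus $C$ is still a cycle of length $g_e$ through $e$ in the final graph $H$, which gives $g_{e,H} \leq g_{e,G}$.

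For the lower bound, there are two operations to control, and I treat them separately.
\begin{enumerate}
\item \emph{Deleting $F$.} This creates no new cycles: every cycle of $G \setminus F$ is a cycle of $G$.
\item \emph{Identifying vertices of distinct components.} By the argument of Proposition~\ref{prop:concat}, each step joins two graphs at a single vertex, so every cycle of the result lies entirely within one of the pieces.
\end{enumerate}
By induction on the number of identifications, every cycle of $H$ is therefore a cycle of $G \setminus F$, and hence a cycle of $G$. Any cycle through $e$ in $H$ is then a cycle through $e$ in $G$, so its length is at least $g_{e,G}$. This gives $g_{e,H} \geq g_{e,G}$.

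The main care point is the reconnection step. Identifying two vertices \emph{within the same component} would create new, possibly shorter cycles. I will make explicit that the identifications are only between vertices of distinct components, as Definition~\ref{def:edge-removal} intends. With that, Proposition~\ref{prop:concat} applies verbatim at each step, and the two inequalities together give $g_{e,H} = g_{e,G}$.
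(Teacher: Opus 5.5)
Your proof is correct and follows the paper's argument. The realizing cycle of $e$ has length $g_e<\max(S)$, so it avoids $E_{\max}$ and survives the removal of $F$. Neither deletion nor vertex identification between distinct components creates new cycles (Proposition~\ref{prop:concat}). You state the two inequalities and the distinct-components requirement more explicitly than the paper does, but the approach is the same.
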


\begin{proof}
Let $e \in E(G) \setminus E_{\max}$. Then $e$ belongs to a cycle of length
$g_e < \max(S)$, which therefore contains no edge of $E_{\max}$. Removing
$F \subseteq E_{\max}$ leaves this cycle intact, so $g_e$ is unchanged. Reconnecting any
resulting components by vertex identifications introduces no new cycles
(Proposition~\ref{prop:concat}), hence does not affect $g_e$ either.
\end{proof}

\begin{definition}[Path attachment]
\label{def:path-attachment}
Let $G \in \mathcal{G}$, $v, v' \in V(G)$, and $k \geq 1$. Denote by $P_{k+1}$ the path graph on $k+1$ vertices (and $k$ edges). The graph $\Gamma^k_{v,v'}(G)$ (Figure~\ref{fig_path-attachment}) is obtained
from the vertex identification of $G$ and $P_{k+1}$ at both endpoints of $P_{k+1}$,
mapped to $v$ and $v'$ respectively.
\end{definition}

\begin{figure}[H]
\centering
\begin{tikzpicture}[
  every node/.style={circle, draw, fill=white, inner sep=2pt, minimum size=18pt},
  lbl/.style={draw=none, fill=none},
]

\begin{scope}[xshift=0cm]
  \node (t1) at (0, 1.5)    {};
  \node (t2) at (0, 0)      {};
  \node (t3) at (1.5, 0.75) {};
  \draw[thick, blue] (t1) -- node[lbl, left,  blue]{$3$} (t2);
  \draw[thick, blue] (t2) -- node[lbl, below, blue]{$3$} (t3);
  \draw[thick, blue] (t1) -- node[lbl, above, blue]{$3$} (t3);

  \node (q1) at (-1.5, 1.5) {};
  \node (q2) at (-1.5, 0)   {};
  \draw[thick, dashed, red] (t1) -- node[lbl, above, red]{$4$} (q1);
  \draw[thick, dashed, red] (q1) -- node[lbl, left,  red]{$4$} (q2);
  \draw[thick, dashed, red] (q2) -- node[lbl, below, red]{$4$} (t2);

  \node[fill=blue!20] at (q1) [label=center:$v$]  {};
  \node[fill=blue!20] at (t3) [label=center:$v'$] {};

  \node[lbl] at (0.5, -1.2){$G$};
\end{scope}

\draw[->, thick] (3.0, 0.75) -- (4.5, 0.75)
  node[lbl, midway, above, font=\small]{attach $P_{k+1}$};

\begin{scope}[xshift=5.5cm]
  \node (t1) at (2, 1.5)    {};
  \node (t2) at (2, 0)      {};
  \node (t3) at (3.5, 0.75) {};
  \draw[thick, blue] (t1) -- node[lbl, left,  blue]{$3$} (t2);
  \draw[thick, blue] (t2) -- node[lbl, below, blue]{$3$} (t3);
  \draw[thick, blue] (t1) -- node[lbl, above, blue]{$3$} (t3);

  \node (q1) at (.5, 1.5) {};
  \node (q2) at (.5, 0)   {};
  \draw[thick, dashed, red] (t1) -- node[lbl, above, red]{$4$} (q1);
  \draw[thick, dashed, red] (q1) -- node[lbl, left,  red]{$4$} (q2);
  \draw[thick, dashed, red] (q2) -- node[lbl, below, red]{$4$} (t2);

  \node[fill=blue!20] at (q1) [label=center:$v$]  {};
  \node[fill=blue!20] at (t3) [label=center:$v'$] {};

  \node (p1) at (0.5,  2.5) {};
  \node (p2) at (2.5,    2.5) {};
  \draw[thick, dotted, violet] (q1) -- node[lbl, left, violet]{$5$} (p1);
  \draw[thick, dotted, violet] (p1) -- node[lbl, above, violet]{$5$} (p2);
  \draw[thick, dotted, violet] (p2) -- node[lbl, right, violet]{$5$} (t3);

    \node[lbl] at (2, -1.2){$\Gamma^3_{v,v'}(G)$};
\end{scope}

\end{tikzpicture}
\caption{Path attachment $\Gamma^k_{v,v'}(G)$ with $k=3$. The vertices $v$ and $v'$
  are at distance $d_G(v,v') = 2$. Attaching a path of $3$ edges
  (dotted) creates a new cycle of length $3 + 2 = 5$.%
\label{fig_path-attachment}}
\end{figure}

\begin{proposition}
\label{prop:add_path}
Let $G \in \mathcal{G}$, $v, v' \in V(G)$, $S = \sigma(G)$, and
$g_{\max} = \max(S)$. If $k \geq g_{\max} - d_G(v,v')$, then
\[
  \sigma\!\left(\Gamma^k_{v,v'}(G)\right) = S \uplus  ((k + d_G(v,v'))^{(k)})
\]
Moreover, the $k$ added edges have edge-girth $k + d_G(v,v')$.
\end{proposition}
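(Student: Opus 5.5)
Set $d := d_G(v,v')$, $H := \Gamma^k_{v,v'}(G)$, and let $P$ be the attached path with internal vertices $w_1,\dots,w_{k-1}$. The plan is to classify the cycles of $H$ according to whether they use $P$, and then compare edge-girths in $G$ and in $H$ separately for old and new edges.

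\textbf{Preliminary observations.}
\begin{enumerate}
\item[(i)] Since $k$ is finite, the hypothesis $k \geq g_{\max} - d$ forces $g_{\max} < \infty$. Hence $S$ contains no $\infty$, and every edge of $G$ has a finite edge-girth. I will use this when handling old edges.
\item[(ii)] $H$ is simple. If $v = v'$, then $d = 0$ and $k \geq g_{\max} \geq 3$, so $P$ closes into a cycle of length at least $3$. If $d = 1$, then $k \geq g_{\max} - 1 \geq 2$, so no parallel edge is created.
\item[(iii)] $H$ is connected, because $G$ is.
\end{enumerate}

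\textbf{Structure of cycles.} Each internal vertex $w_i$ of $P$ has degree $2$ in $H$. Therefore a cycle of $H$ that contains any edge of $P$ contains all of $P$. Such a cycle is exactly $P$ together with a $v$--$v'$ path $Q$ in $G$; when $v = v'$ it is $P$ itself. Every other cycle of $H$ is a cycle of $G$. Consequently every cycle through $P$ has length $k + |Q| \geq k + d$. Equality is attained by taking $Q$ to be a shortest $v$--$v'$ path in $G$, and this is a simple cycle of $H$ because $P$ meets $G$ only at $v$ and $v'$. So each of the $k$ new edges has edge-girth exactly $k + d$, which gives the ``moreover'' part.

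\textbf{Old edges.} Let $e \in E(G)$. By (i), $g_{e,G} < \infty$.
\begin{itemize}
\item A realizing cycle for $e$ in $G$ survives in $H$, so $g_{e,H} \leq g_{e,G}$.
\item Any cycle of $H$ through $e$ is either a cycle of $G$, of length at least $g_{e,G}$, or a cycle through $P$, of length at least $k + d \geq g_{\max} \geq g_{e,G}$.
\end{itemize}
Hence $g_{e,H} = g_{e,G}$. Combining this with the previous step gives $\sigma(H) = S \uplus \bigl((k+d)^{(k)}\bigr)$.

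\textbf{Expected main obstacle.} The delicate point is edges of $G$ with $g_e = \infty$ (bridges). A bridge lying on every $v$--$v'$ path would acquire finite girth in $H$. This case is excluded only by observation (i): the hypothesis is vacuous when $\infty \in S$. I would state this explicitly in the proof rather than leave it implicit.
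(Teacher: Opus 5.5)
Your proposal is correct and follows essentially the same route as the paper. Every cycle of $\Gamma^k_{v,v'}(G)$ that is not already in $G$ consists of the whole attached path plus a $v$--$v'$ path of $G$, so it has length at least $k+d_G(v,v') \geq g_{\max}$; hence old edge-girths are unchanged, and closing the path with a shortest $v$--$v'$ path shows the new edges have edge-girth exactly $k+d_G(v,v')$. Your extra checks make explicit what the paper leaves implicit: that the resulting graph is simple, the degree-$2$ argument for internal path vertices, and the observation that the hypothesis forces $g_{\max}<\infty$, so no bridge of $G$ can acquire finite girth.
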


\begin{proof}
Set $G' = \Gamma^k_{v,v'}(G)$ and $d = d_G(v,v')$ and suppose $k \geq g_{\max} - d$.

Since the operation only adds edges, every
cycle of $G$ is preserved in $G'$. Let $C$ be a cycle in $G'$ that is not a
cycle in $G$. Then $C$ consists of the added path of length $k$ together with
a path of length at least $d$ between $v$ and $v'$ in $G$, so
$|C| \geq k + d \geq g_{\max}$. Hence no new cycle shorter than $g_{\max}$ is
introduced, and $g_{e, G'} = g_{e, G}$ for all $e \in E(G)$.

For the added edges, closing the added path with a shortest path between $v$
and $v'$ in $G$ yields a cycle of length exactly $k + d$, and every cycle
containing an added edge has length at least $k + d$ by the argument above.
Hence each added edge has edge-girth exactly $k + d$, which gives
$\sigma(G') = S \uplus ((k + d)^{(k)})$. 
\end{proof}

\section{Base Cases of the Characterization}
\label{sec:base}

This section establishes realizability for two special families of sequences,
which serve as base cases for the recursive characterization of
Section~\ref{sec:induction}. We first handle sequences containing infinite
edge-girth values, showing that they reduce to sequences in $\mathcal{S}^*$
via a finite--infinite decomposition. We then fully characterize which
constant sequences in $\mathrm{Seq}(\mathcal{C}^*)$ are realizable.

\subsection{Edges Not Belonging to Any Cycle}

\begin{remark}
For every $m \in \mathbb{N}$, the sequence $(\infty^{(m)})$ is realizable:
the path graph $P_{m+1}$ on $m+1$ vertices has exactly $m$ edges, each with
edge-girth $\infty$.
\end{remark}

\begin{proposition}[Finite--infinite decomposition]
\label{prop:decomposition}
Every $S \in \mathcal{S}$ decomposes uniquely as
$S = S^* \uplus (\infty^{(m)})$ with $S^* \in \mathcal{S}^*$ and $m \in \mathbb{N}$.
\end{proposition}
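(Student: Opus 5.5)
Uniqueness is immediate, since the multiset decomposition is forced: $m$ has to be the multiplicity of $\infty$ in $S$, and $S^*$ is then the subsequence of finite values. So the real content is existence, namely that the finite part $S^*$ is itself realizable, i.e.\ $S^* \in \mathcal{S}^*$. The plan is to take any $G \in \sigma^{-1}(S)$ and remove its bridges.

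Fix $G$ with $\sigma(G) = S$, and let $B = \{e \in E(G) \mid g_e = \infty\}$. This is exactly the set of bridges of $G$, and $|B| = m$. Delete $B$, giving components $G_1, \ldots, G_r$. Then rejoin them into a single connected graph $H$ by successive vertex identifications, as in Definition~\ref{def:vertex-id}; if $r = 1$, set $H = G_1$.

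The key step is to show that edge-girths are unchanged for every edge $e \notin B$. Take such an $e$. Since $g_e < \infty$, some cycle contains $e$, and a cycle never uses a bridge. So every cycle of $G$ through $e$ lies entirely inside the component $G_i$ containing $e$. Conversely, every cycle of $G_i$ is a cycle of $G$. Hence $g_{e,G_i} = g_{e,G}$.

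This is essentially Proposition~\ref{prop:remove-girth} with $\max(S) = \infty$ and $F = E_{\max} = B$, though that proposition only asserts invariance for edges outside $E_{\max}$, which is precisely what we need. By Proposition~\ref{prop:concat}, gluing the $G_i$ adds no new cycles, so $\sigma(H) = \biguplus_i \sigma(G_i) = S^*$. Since $H$ is simple and connected and all its values are finite, $S^* \in \mathcal{S}^*$.

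One degenerate case needs care: if every edge of $G$ is a bridge, the components are isolated vertices. Identifying them all gives a single vertex, which is the edgeless realization of the empty sequence, admitted by convention.

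The main obstacle is being precise that deleting bridges does not change the girth of the remaining edges, in particular that no realizing cycle of a finite-girth edge passes through a bridge. That follows because a bridge lies on no cycle at all. Everything else is bookkeeping.
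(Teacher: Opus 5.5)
Your proof is correct and follows essentially the same route as the paper: delete the edges of infinite edge-girth (the bridges), reconnect the components by vertex identifications, and invoke Propositions~\ref{prop:remove-girth} and~\ref{prop:concat}, with uniqueness read off from the multiplicity of $\infty$. The paper's proof also records the converse, that $S^* \uplus (\infty^{(m)}) \in \mathcal{S}$ for every $S^* \in \mathcal{S}^*$ by closure under $\uplus$. The statement as written does not require this converse, but Theorem~\ref{thm:main} relies on it later.
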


\begin{proof}
\textit{Existence.} Let $S \in \mathcal{S}$, and let $m$ be the multiplicity of
$\infty$ in $S$. Set $S^* := S \setminus (\infty^{(m)})$, so that
$S = S^* \uplus (\infty^{(m)})$. If $m = 0$ then $S^* = S \in \mathcal{S}^*$
and we are done. Otherwise, let $G \in \sigma^{-1}(S)$ and let $G^*$ be the
subgraph of $G$ obtained by removing all edges $e$ with $g_e = \infty$. By
Proposition~\ref{prop:remove-girth}, this does not affect $g_e$ for any
remaining edge. If $G^*$ is disconnected, reconnect its components by successive
vertex identifications. By Proposition~\ref{prop:concat}, this leaves
$\sigma(G^*)$ unchanged. We obtain $\sigma(G^*) = S^*$, hence
$S^* \in \mathcal{S}^*$. Conversely, given $S^* \in \mathcal{S}^*$ and
$m \in \mathbb{N}$, since $(\infty^{(m)}) \in \mathcal{S}$ and $\mathcal{S}$
is closed under $\uplus$ (Proposition~\ref{prop:concat}), we have
$S^* \uplus (\infty^{(m)}) \in \mathcal{S}$.

\textit{Uniqueness.} The value of $m$ is uniquely determined as the number of
occurrences of $\infty$ in $S$, and $S^*$ is then the subsequence of finite
values of $S$.
\end{proof}

\subsection{Realizability of Constant Sequences}
\label{sec:constant-real}

\begin{proposition}[Realizability of constant sequences]
\label{prop:constant}
Let $g \in \mathcal{C}^*$ and $m \in \mathbb{N}$. Then $(g^{(m)}) \in \mathcal{S}^*$
if and only if
\[
  m = 0 \quad \text{or} \quad m = g \quad \text{or} \quad m \geq \lceil 3g/2 \rceil,
\]
where $(g^{(0)})$ denotes the empty sequence.
\end{proposition}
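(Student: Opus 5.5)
We prove the two directions separately. Necessity is a short structural argument built on Lemma~\ref{prop:overlap_path} and Proposition~\ref{prop:overlap}. Sufficiency is constructive, starting from theta graphs and then growing them with Proposition~\ref{prop:add_path}.

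\emph{Necessity.} Let $G \in \sigma^{-1}((g^{(m)}))$ with $m \geq 1$, and let $C$ be a realizing cycle of some edge; then $|C| = g$, so $m \geq g$. If $m = g$ we are done. Otherwise pick $e \in E(G) \setminus E(C)$ and a realizing cycle $C_2$ for $e$.
\begin{itemize}
\item Since every edge has edge-girth $g$, the sets $\{e' \in E(C) \mid g_{e'}=g\} = E(C)$ and $E(C_2)$ differ, because $e \in E(C_2)\setminus E(C)$. Lemma~\ref{prop:overlap_path} therefore yields a realizing $g$-cycle $C'_2$ with $E(C'_2) = E(C_2)$, so $C'_2$ still contains $e$, and $E(C) \cap E(C'_2)$ is a path.
\item Consider the subgraph $H = C \cup C'_2$. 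It contains no cycle shorter than $g$, since it is a subgraph of $G$. Every edge of $H$ lies on a $g$-cycle of $H$, so $H$ consists of two distinct realizing $g$-cycles. Proposition~\ref{prop:overlap} gives $|E(C)\cap E(C'_2)| \leq \lfloor g/2\rfloor$.
\item Hence $m \geq |E(C) \cup E(C'_2)| \geq 2g - \lfloor g/2 \rfloor = \lceil 3g/2\rceil$.
\end{itemize}

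\emph{Sufficiency.} The cases $m=0$ (empty sequence) and $m = g$ (the cycle $C_g$) are immediate.
\begin{itemize}
\item For $\lceil 3g/2\rceil \leq m \leq 2g-1$, take the theta graph formed by two vertices joined by internally disjoint paths of lengths $a, b, b$ with $a+b = g$ and $\lceil g/2\rceil \leq b \leq g-1$. Simplicity holds since $b \geq 2$. Edges on the $a$-path have edge-girth $a+b = g$, and edges on a $b$-path have edge-girth $\min(a+b, 2b) = g$. This graph has $m = g + b$ edges.
\item The value $m = 2g$ is realized by two copies of $C_g$ joined by vertex identification, using Proposition~\ref{prop:concat}.
\item Let $G$ be any realization of $(g^{(m)})$. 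By Proposition~\ref{prop:realizing_shortest}(3), $G$ has vertex pairs at every distance $d \in \{1,\ldots,\lfloor g/2\rfloor\}$. Attach a path of length $k = g-d \geq g - d$ between such a pair; this $k$ satisfies the hypothesis of Proposition~\ref{prop:add_path}. Since $k \geq \lceil g/2 \rceil \geq 2$, the result is simple. It realizes $(g^{(m+g-d)})$, so from $m$ we reach every value in $[m+\lceil g/2\rceil,\, m+g-1]$.
\item Induction on $m$: the realizable values contain the interval $I_0 = [\lceil 3g/2\rceil, 2g]$, whose length is at least $\lfloor g/2 \rfloor$. Applying these increments to $I_0$ covers $[2g,\, 3g-1]$, which overlaps $I_0$. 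Iterating shows every $m \geq \lceil 3g/2\rceil$ is realizable.
\end{itemize}

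The main obstacle is this last covering step. The base theta family stops at $2g-1$, and one must check that the admissible increments $[\lceil g/2\rceil, g-1]$ leave no gaps, in particular just above $2g$. To handle it, I would verify that the union of $I_0$ and $I_0 + [\lceil g/2\rceil, g-1]$ is a single interval $[\lceil 3g/2\rceil, 3g-1]$ of length exceeding $g-1$. Once this holds, further increments of exactly $g-1$ (taking $d=1$) extend the coverage indefinitely.
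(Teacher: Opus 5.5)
Your proof is correct and follows essentially the paper's route. Necessity comes from the overlap bound of Proposition~\ref{prop:overlap}, which you apply slightly more carefully than the paper, to the subgraph $C \cup C_2$ rather than to all of $G$. Sufficiency comes from vertex identification of $g$-cycles together with path attachments of length in $\{\lceil g/2\rceil,\ldots,g-1\}$ via Propositions~\ref{prop:realizing_shortest}(3) and~\ref{prop:add_path}. Your theta graphs are exactly a $g$-cycle with one such path attached, and your interval-covering argument replaces the paper's explicit case split on $m = ng + s$ without changing the substance.
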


\begin{proof}
Let $\mathcal{M}_g = \{ m \in \mathbb{N} \mid (g^{(m)}) \in \mathcal{S}^* \}$.

\paragraph{Necessity.} We show $\bigl(\{1,\ldots,g-1\} \cup
\{g+1,\ldots,\lceil 3g/2\rceil - 1\}\bigr) \cap \mathcal{M}_g = \varnothing$.

\textit{Case $1 \leq m \leq g-1$.} Suppose $G \in \sigma^{-1}((g^{(m)}))$.
Every edge of $G$ has edge-girth $g$, so every edge belongs to a cycle of
length $g$, which requires at least $g$ edges. Hence $|E(G)| \geq g > m$,
a contradiction.

\textit{Case $g+1 \leq m \leq \lceil 3g/2 \rceil - 1$.} Suppose
$G \in \sigma^{-1}((g^{(m)}))$, and let $C_1$ be a realizing $g$-cycle of $G$.
Since $m > g$, there exists an edge $e \notin E(C_1)$. Let $C_2$ be a realizing
$g$-cycle for $e$. Set $m' := |E(C_1) \cap E(C_2)|$. By
Proposition~\ref{prop:overlap}, $m' \leq \lfloor g/2 \rfloor$. Counting the
edges of $C_1 \cup C_2$,
\[
  m \;\geq\; |E(C_1) \cup E(C_2)| \;=\; 2g - m'
  \;\geq\; 2g - \lfloor g/2 \rfloor \;=\; \lceil 3g/2 \rceil,
\]
contradicting $m \leq \lceil 3g/2 \rceil - 1$.

\paragraph{Sufficiency.} We show $\{0\} \cup \{g\} \cup \{\lceil 3g/2 \rceil,
\lceil 3g/2 \rceil + 1, \ldots\} \subseteq \mathcal{M}_g$.

\textit{Base cases.} $0 \in \mathcal{M}_g$ by convention. For any $n \geq 1$,
connecting $n$ disjoint $g$-cycles by successive vertex identifications yields
a graph with edge-girth sequence $(g^{(ng)})$ by Proposition~\ref{prop:concat},
so $ng \in \mathcal{M}_g$ for all $n \geq 1$.

\textit{Incrementing by $m'$.} Let $m \in \mathcal{M}_g \setminus \{0\}$ and
$m' \in \{\lceil g/2 \rceil, \ldots, g\}$. Let $G \in \sigma^{-1}((g^{(m)}))$
and choose $v, v' \in V(G)$ with $d_G(v,v') = g - m'$, which is possible since
$g - m' \leq \lfloor g/2 \rfloor$ and Proposition~\ref{prop:realizing_shortest}(3)
applies. Then Proposition~\ref{prop:add_path} gives
$\sigma(\Gamma^{m'}_{v,v'}(G)) = (g^{(m + m')})$, so $m + m' \in \mathcal{M}_g$.

\textit{Reaching all $m \geq \lceil 3g/2 \rceil$.} Write $m = ng + s$ with
$n \geq 1$ and $0 \leq s \leq g-1$, and distinguish three cases.
If $s = 0$, then $m \in \mathcal{M}_g$ by the base case.
If $\lceil g/2 \rceil \leq s \leq g-1$, start from $ng \in \mathcal{M}_g$ and
apply the incrementing step once with $m' = s$.
If $1 \leq s < \lceil g/2 \rceil$, then $m \geq \lceil 3g/2 \rceil$ forces
$n \geq 2$. Start from $(n-1)g \in \mathcal{M}_g$ and apply the incrementing
step twice, with $m'_1 = \lceil g/2 \rceil$ and $m'_2 = \lfloor g/2 \rfloor
+ s$. Both increments are valid: $m'_1 \in \{\lceil g/2 \rceil, \ldots, g\}$
trivially, and since $1 \leq s < \lceil g/2 \rceil$, we have $\lceil g/2
\rceil \leq \lfloor g/2 \rfloor + 1 \leq m'_2 \leq g - 1$. This yields
$(n-1)g + m'_1 + m'_2 = ng + s = m \in \mathcal{M}_g$.
\end{proof}


\section{Characterization of Realizable Sequences}
\label{sec:induction}

In this section, we establish the complete characterization of $\mathcal{S}^*$.
Section~\ref{sec:induction}.1 introduces the sets $\mathcal{D}_S^{(g)}$ and
reduces realizability of $S$ to that of a shorter prefix sequence $S_0$.
Section~\ref{sec:induction}.2 develops the max-linearization procedures, which
are the key technical tools used to characterize $\mathcal{D}_S^{(g)}$.
Section~\ref{sec:induction}.3 then gives the explicit characterization of
$\mathcal{D}_S^{(g)}$ in terms of the maximum diameter parameter $d^*_S$.

\subsection{Reduction to \texorpdfstring{$\mathcal{D}_S^{(g)}$}{D}}

\begin{definition}[Appendable multiplicities]
\label{def:D}
Let $S \in \mathcal{S}^*$ and $g > \max(S)$. We define the set of \emph{valid multiplicities} for appending edge-girth $g$ to $S$ as
\[
  \mathcal{D}_S^{(g)} := \bigl\{ m \in \mathbb{N}^* \mid S \uplus (g^{(m)}) \in \mathcal{S}^* \bigr\}.
\]
In other words, $m \in \mathcal{D}_S^{(g)}$ if and only if one can extend the
realizable sequence $S$ by appending $m$ edges of edge-girth $g$ and obtain a
new realizable sequence.
\end{definition}

\begin{proposition}
\label{prop:induction}
Let $S \in \mathrm{Seq}(\mathcal{C}^*)$ be non-empty, let $g > \max(S)$, and $m > 0$.
Then
\[
  S \uplus (g^{(m)}) \in \mathcal{S}^* \iff S \in \mathcal{S}^* \text{ and } m \in \mathcal{D}_S^{(g)}.
\]
\end{proposition}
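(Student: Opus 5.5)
The direction ``$\Leftarrow$'' is immediate from the definitions. If $S \in \mathcal{S}^*$ and $m \in \mathcal{D}_S^{(g)}$, then Definition~\ref{def:D} says exactly that $S \uplus (g^{(m)}) \in \mathcal{S}^*$. The hypotheses $g > \max(S)$ and $m > 0$ are what make $\mathcal{D}_S^{(g)}$ well defined here.

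For ``$\Rightarrow$'', I will show that the prefix $S$ is itself realizable; then $m \in \mathcal{D}_S^{(g)}$ follows directly from Definition~\ref{def:D}. Let $G \in \sigma^{-1}(S \uplus (g^{(m)}))$. Since $g > \max(S)$ and $m > 0$, the maximum value of $\sigma(G)$ is $g$, so $E_{\max} = \{e \in E(G) \mid g_e = g\}$ has exactly $m$ edges. I remove $F = E_{\max}$ from $G$. If the resulting graph is disconnected, I rejoin its components by successive vertex identifications. Call the result $G'$.

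By Proposition~\ref{prop:remove-girth}, every edge $e \in E(G) \setminus E_{\max}$ keeps its edge-girth, and by Proposition~\ref{prop:concat} the rejoining does not change it either. The edge set of $G'$ is exactly $E(G) \setminus E_{\max}$, whose girth values form $S$ with multiplicities. Hence $\sigma(G') = S$, and all these values are finite since $S \in \mathrm{Seq}(\mathcal{C}^*)$. So $S \in \mathcal{S}^*$.

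The only point needing care is simplicity of $G'$ after rejoining. Vertex identification of graphs with disjoint vertex sets at a single vertex never creates parallel edges or loops, so $G' \in \mathcal{G}$. Isolated vertices left by the removal can simply be discarded, or identified harmlessly, since they carry no edges. Beyond this bookkeeping I expect no real obstacle; the proposition is essentially a restatement of Proposition~\ref{prop:remove-girth}.
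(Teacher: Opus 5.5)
Your proposal is correct and matches the paper's proof: remove all edges of edge-girth $g$, use Propositions~\ref{prop:remove-girth} and~\ref{prop:concat} to obtain a realization of $S$, and conclude $m \in \mathcal{D}_S^{(g)}$ directly from Definition~\ref{def:D}. The converse is immediate from that definition. Your extra remarks on simplicity and isolated vertices are harmless bookkeeping that the paper leaves implicit.
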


\begin{proof}
$(\Rightarrow)$ Let $S' = S \uplus (g^{(m)}) \in \mathcal{S}^*$ and
$G \in \sigma^{-1}(S')$. Remove from $G$ all edges $e$ with $g_e = g$. By
Proposition~\ref{prop:remove-girth}, the edge-girth of every remaining edge is
unchanged. Reconnecting any resulting components by successive vertex identifications
does not alter the edge-girth sequence (Proposition~\ref{prop:concat}), yielding a
graph $G^* $ with $\sigma(G^*) = S$, so $S \in \mathcal{S}^*$. Since
$S' \in \mathcal{S}^*$ and $S \in \mathcal{S}^*$, we have $m \in \mathcal{D}_S^{(g)}$
by definition.

$(\Leftarrow)$ Immediate from the definition of $\mathcal{D}_S^{(g)}$.
\end{proof}

\begin{remark}
\label{rem:D-known}
Let $S \in \mathcal{S}^*$ and $g > \max(S)$. By Proposition~\ref{prop:constant},
$(g^{(m)})$ is realizable for $m \in \{0, g\}$ or $m \geq \lceil 3g/2 \rceil$.
Since $\mathcal{S}^*$ is closed under $\uplus$ (Proposition~\ref{prop:concat}),
it follows that $S \uplus (g^{(m)}) \in \mathcal{S}^*$ for the same values of $m$,
hence
\[
  \{g\} \cup \{\, m \geq \lceil 3g/2 \rceil \,\} \;\subseteq\; \mathcal{D}_S^{(g)}.
\]
Characterizing $\mathcal{D}_S^{(g)}$ therefore reduces to determining
\[
  \mathcal{D}_{S,<}^{(g)} := \{1, \ldots, g-1\} \cap \mathcal{D}_S^{(g)},
  \qquad
  \mathcal{D}_{S,>}^{(g)} := \{g+1, \ldots, \lceil 3g/2 \rceil - 1\} \cap \mathcal{D}_S^{(g)}.
\]
\end{remark}

\subsection{Max-linearization}

We now introduce two graph transformation procedures, called the
$<$-max-linearization and the $>$-max-linearization, which will be central to
the characterization of $\mathcal{D}_{S,<}^{(g)}$ and $\mathcal{D}_{S,>}^{(g)}$
for $S \in \mathcal{S}^*$ and $g > \max(S)$. The $<$-max-linearization applies
when the multiplicity $m_{\max}$ of the maximum edge-girth $g_{\max}$ satisfies
$m_{\max} \in \{1, \ldots, g_{\max}-1\}$, while the $>$-max-linearization
applies when $m_{\max} \in \{g_{\max}+1, \ldots, \lceil 3g_{\max}/2 \rceil - 1\}$.
Both procedures are illustrated in Figures~\ref{fig_max-lin-steps}
and~\ref{fig_max-lin-steps-2}.

\begin{definition}[$<$-max-linearization]
\label{def:max-linearization}
Let $G \in \mathcal{G}$ with $g_{\max} = \max(\sigma(G))$ and multiplicity
$m_{\max} \in \{1,\ldots,g_{\max}-1\}$. Set $g = g_{\max}$ and
$E_{\max} = \{e \in E(G) \mid g_e = g\}$.
The \emph{$<$-max-linearization} of $G$ (Figure~\ref{fig_max-lin-steps}) is the graph constructed as follows.

\begin{enumerate}
    \item Let $C = (v_1,\ldots,v_g,v_1)$ be a realizing $g$-cycle of $G$ written so
        that $(v_1,\ldots,v_k)$ is a subpath of $C$ whose edges all belong
        to $E_{\max}$.

  \item Remove all edges of
    $E_{\max} \setminus E(C)$ from $G$ (Definition~\ref{def:edge-removal}), and set $c := |E_{\max} \setminus E(C)|$.
    If necessary, reconnect all components
    by successive vertex identifications (Definition~\ref{def:vertex-id}).

  \item For each $i \in \{k,\ldots,g-1\}$, if
    $\{v_i,v_{i+1}\} \in E_{\max}$, contract $\{v_i,v_{i+1}\}$ (Definition~\ref{def:edge-contraction}) and increment $c$. 
    After all contractions,
    relabel the surviving vertices of $C$ outside $(v_1,\ldots,v_k)$ as
    $v'_{k+1},\ldots,v'_{g'}$. The cycle $C$ has now
    become
    \[
      C = (v_1,\ldots,v_k,\, v'_{k+1},\ldots,v'_{g'},\, v_1).
    \]

  \item Perform a $c$-subdivision of $\{v_1,v_2\}$ (Definition~\ref{def:edge-subdivision}),
    introducing new vertices $u_1,\ldots,u_c$ so that $\{v_1,v_2\}$ is replaced
    by the path $(v_1,u_1,\ldots,u_c,v_2)$. The cycle $C$ has
    now become
    \[
      C = (v_1,\, u_1,\ldots,u_c,\, v_2,\ldots,v_k,\,
           v'_{k+1},\ldots,v'_{g'},\, v_1).
    \]

  \item Set $l := c + k + g' - g$.
    If $l>0$, remove the edge $\{v_{k-1}, v_k\}$ and add the edge $\{v_{k-1}, v'_l\}$ (if $k = 2$, relabel $u_c$ as $v_{k-1}$).
    The cycle $C$ has
    now become
    \[
      C = (v_1,\, u_1,\ldots,u_c,\, v_2,\ldots,v_{k-1},\,
           v'_{l},\ldots,v'_{g'},\, v_1).
    \]
\end{enumerate}
\end{definition}

\begin{figure}[H]
\centering
\begin{subfigure}[h]{\textwidth}
\centering
\begin{tikzpicture}[
  every node/.style={circle, draw, fill=white, inner sep=2pt, minimum size=14pt},
  lbl/.style={draw=none, fill=none},
  scale=.75,
]

\begin{scope}[xshift=0cm]
  \node (q1)  at (-1, 0)  {};
  \node (q2)  at (0, -1)  {};
  \node (q3)  at (0,  1)  {};
  \node (q4)  at (2, -1)  {};
  \node (q5)  at (2,  1)  {};
  \node (q6)  at (3,  0)  {};
  \node (q7)  at (4, -1)  {};
  \node (q8)  at (4,  1)  {};
  \node (q9)  at (5,  2)  {};
  \node (q10) at (6, -1)  {};
  \node (q11) at (6,  1)  {};
  \node (q12) at (7,  0)  {};

  \node at (q3) [label=center:$v_1$] {};
  \node at (q5) [label=center:$v_2$] {};
  \node at (q4) [label=center:$v_3$] {};
  \node at (q2) [label=center:$v_4$] {};

  \draw[thick, blue] (q1)  -- (q2);
  \draw[thick, blue] (q1)  -- (q3);
  \draw[thick, blue] (q2)  -- (q3);
  \draw[thick, dashed, red]  (q2)  -- node[lbl, below, red]{$4$} (q4);
  \draw[thick, dashed, red]  (q3)  -- node[lbl, above, red]{$4$} (q5);
  \draw[thick, blue] (q4)  -- (q5);
  \draw[thick, blue] (q4)  -- (q6);
  \draw[thick, blue] (q5)  -- (q6);
  \draw[thick, blue] (q6)  -- (q7);
  \draw[thick, blue] (q6)  -- (q8);
  \draw[thick, blue] (q8)  -- (q7);
  \draw[thick, dashed, red]  (q7)  -- node[lbl, below, red]{$4$} (q10);
  \draw[thick, blue] (q8)  -- (q11);
  \draw[thick, blue] (q10) -- (q11);
  \draw[thick, blue] (q9)  -- (q8);
  \draw[thick, blue] (q9)  -- (q11);
  \draw[thick, blue] (q10) -- (q12);
  \draw[thick, blue] (q11) -- (q12);
  \node[lbl, font=\small] at (3, -2.2) {Original Graph};
\end{scope}

\draw[->, thick] (7.8, 0) -- (9.2, 00);

\begin{scope}[xshift=11cm]
  \node (q1)  at (-1, 0)  {};
  \node (q2)  at (0, -1)  {};
  \node (q3)  at (0,  1)  {};
  \node (q4)  at (2, -1)  {};
  \node (q5)  at (2,  1)  {};
  \node (q6)  at (3,  0)  {};
  \node (q7)  at (4, -1)  {};
  \node (q8)  at (4,  1)  {};
  \node (q9)  at (5,  2)  {};
  \node (q10) at (6, -1)  {};
  \node (q11) at (6,  1)  {};
  \node (q12) at (7,  0)  {};

  \node at (q3) [label=center:$v_1$] {};
  \node at (q5) [label=center:$v_2$] {};
  \node at (q4) [label=center:$v_3$] {};
  \node at (q2) [label=center:$v_4$] {};

\node[draw=none, fill=none, font=\small] (c) at (1, 0) {$C$};

  \draw[thick, blue] (q1)  -- (q2);
  \draw[thick, blue] (q1)  -- (q3);
  \draw[thick, blue] (q2)  -- (q3);
  \draw[thick, dashed, red]  (q2)  -- node[lbl, below, red]{$4$} (q4);
  \draw[thick, dashed, red]  (q3)  -- node[lbl, above, red]{$4$} (q5);
  \draw[thick, blue] (q4)  -- (q5);
  \draw[thick, blue] (q4)  -- (q6);
  \draw[thick, blue] (q5)  -- (q6);
  \draw[thick, blue] (q6)  -- (q7);
  \draw[thick, blue] (q6)  -- (q8);
  \draw[thick, blue] (q8)  -- (q7);
  \draw[thick, blue] (q8)  -- (q11);
  \draw[thick, blue] (q10) -- (q11);
  \draw[thick, blue] (q9)  -- (q8);
  \draw[thick, blue] (q9)  -- (q11);
  \draw[thick, blue] (q10) -- (q12);
  \draw[thick, blue] (q11) -- (q12);
  \node[lbl, font=\small] at (3, -2.2) {Remove $E_{\max}$ outside $C = (v_1, v_2, v_3, v_4, v_1)$};
\end{scope}

\end{tikzpicture}
\end{subfigure}
\hfill
\begin{subfigure}[h]{\textwidth}
\centering
\begin{tikzpicture}[
  every node/.style={circle, draw, fill=white, inner sep=2pt, minimum size=14pt},
  lbl/.style={draw=none, fill=none},
  scale=.75,
]

\begin{scope}[xshift=0cm]
  \node (q1)  at (-1, 0)  {};
  \node (q3)  at (0,  1)  {};
  
  \node (q24)  at (1, -1)  {};

  \node (q5)  at (2,  1)  {};
  \node (q6)  at (3,  0)  {};
  \node (q7)  at (4, -1)  {};
  \node (q8)  at (4,  1)  {};
  \node (q9)  at (5,  2)  {};
  \node (q10) at (6, -1)  {};
  \node (q11) at (6,  1)  {};
  \node (q12) at (7,  0)  {};

  \node at (q3) [label=center:$v_1$] {};
  \node at (q5) [label=center:$v_2$] {};
  \node at (q24) [label=center:$v_3$] {};

  \draw[thick, blue] (q1)  -- (q24);
  \draw[thick, blue] (q1)  -- (q3);
  \draw[thick, blue] (q24)  -- (q3);
  \draw[thick, dashed, red]  (q3)  -- node[lbl, above, red]{$3$} (q5);
  \draw[thick, blue] (q24)  -- (q5);
  \draw[thick, blue] (q24)  -- (q6);
  \draw[thick, blue] (q5)  -- (q6);
  \draw[thick, blue] (q6)  -- (q7);
  \draw[thick, blue] (q6)  -- (q8);
  \draw[thick, blue] (q8)  -- (q7);
  \draw[thick, blue] (q8)  -- (q11);
  \draw[thick, blue] (q10) -- (q11);
  \draw[thick, blue] (q9)  -- (q8);
  \draw[thick, blue] (q9)  -- (q11);
  \draw[thick, blue] (q10) -- (q12);
  \draw[thick, blue] (q11) -- (q12);
  \node[lbl, font=\small] at (3, -2.2) {Contract $\{v_3, v_4 \}$};
\end{scope}

\draw[->, thick] (7.8, 0) -- (9.2, 00);

\begin{scope}[xshift=11cm]
  \node (q1)  at (-1, 0)  {};
  \node (q3)  at (0,  1)  {};
  \node (q24)  at (1, -1)  {};
  \node (q3a)  at (.4, 2.5)  {};
  \node (q3b)  at (1.6, 2.5)  {};

  \node (q5)  at (2,  1)  {};
  \node (q6)  at (3,  0)  {};
  \node (q7)  at (4, -1)  {};
  \node (q8)  at (4,  1)  {};
  \node (q9)  at (5,  2)  {};
  \node (q10) at (6, -1)  {};
  \node (q11) at (6,  1)  {};
  \node (q12) at (7,  0)  {};

  \node at (q3) [label=center:$v_1$] {};
  \node at (q5) [label=center:$v_2$] {};
  \node at (q24) [label=center:$v_3$] {};
  \node at (q3a) [label=center:$u_1$] {};
  \node at (q3b) [label=center:$u_2$] {};
  
  \draw[thick, blue] (q1)  -- (q24);
  \draw[thick, blue] (q1)  -- (q3);
  \draw[thick, blue] (q24)  -- (q3);

  \draw[thick, dashed, red]  (q3)  -- node[lbl, left, red]{$5$} (q3a);
  \draw[thick, dashed, red]  (q3a)  -- node[lbl, above, red]{$5$} (q3b);
  \draw[thick, dashed, red]  (q3b)  -- node[lbl, right, red]{$5$} (q5);

  \draw[thick, blue] (q24)  -- (q5);
  \draw[thick, blue] (q24)  -- (q6);
  \draw[thick, blue] (q5)  -- (q6);
  \draw[thick, blue] (q6)  -- (q7);
  \draw[thick, blue] (q6)  -- (q8);
  \draw[thick, blue] (q8)  -- (q7);
  \draw[thick, blue] (q8)  -- (q11);
  \draw[thick, blue] (q10) -- (q11);
  \draw[thick, blue] (q9)  -- (q8);
  \draw[thick, blue] (q9)  -- (q11);
  \draw[thick, blue] (q10) -- (q12);
  \draw[thick, blue] (q11) -- (q12);
  \node[lbl, font=\small] at (3, -2.2) {Insert $u_1, u_2$ between $\{v_1, v_2\}$};
\end{scope}

\end{tikzpicture}
\end{subfigure}


\begin{subfigure}[h]{\textwidth}
\centering
\begin{tikzpicture}[
  every node/.style={circle, draw, fill=white, inner sep=2pt, minimum size=14pt},
  lbl/.style={draw=none, fill=none},
  scale=.75,
]

\begin{scope}[xshift=11cm]
  \node (q1)  at (-1, 0)  {};
  \node (q3)  at (0,  1)  {};
  \node (q24)  at (1, -1)  {};
  \node (q3a)  at (.4, 2.5)  {};
  \node (q3b)  at (1.6, 2.5)  {};

  \node (q5)  at (2,  1)  {};
  \node (q6)  at (3,  0)  {};
  \node (q7)  at (4, -1)  {};
  \node (q8)  at (4,  1)  {};
  \node (q9)  at (5,  2)  {};
  \node (q10) at (6, -1)  {};
  \node (q11) at (6,  1)  {};
  \node (q12) at (7,  0)  {};

  \node at (q3) [label=center:$v_1$] {};
  \node at (q24) [label=center:$v_3$] {};
  \node at (q3a) [label=center:$u_1$] {};
  \node at (q3b) [label=center:$u_2$] {};
  
  \draw[thick, blue] (q1)  -- (q24);
  \draw[thick, blue] (q1)  -- (q3);
  \draw[thick, blue] (q24)  -- (q3);

  \draw[thick, dashed, red]  (q3)  -- node[lbl, left, red]{$4$} (q3a);
  \draw[thick, dashed, red]  (q3a)  -- node[lbl, above, red]{$4$} (q3b);
  \draw[thick, dashed, red]  (q3b)  -- node[lbl, left, red]{$4$} (q24);

  \draw[thick, blue] (q24)  -- (q5);
  \draw[thick, blue] (q24)  -- (q6);
  \draw[thick, blue] (q5)  -- (q6);
  \draw[thick, blue] (q6)  -- (q7);
  \draw[thick, blue] (q6)  -- (q8);
  \draw[thick, blue] (q8)  -- (q7);
  \draw[thick, blue] (q8)  -- (q11);
  \draw[thick, blue] (q10) -- (q11);
  \draw[thick, blue] (q9)  -- (q8);
  \draw[thick, blue] (q9)  -- (q11);
  \draw[thick, blue] (q10) -- (q12);
  \draw[thick, blue] (q11) -- (q12);
  \node[lbl, font=\small] at (3, -2.2) {Adjust cycle $C$ length};
\end{scope}
\end{tikzpicture}
\end{subfigure}
\caption{Steps of the $<$-max-linearization procedure.
\label{fig_max-lin-steps}}
\end{figure}

\begin{proposition}
\label{prop:max-linear}
Let $G \in \mathcal{G}$ with $\sigma(G) = S = (g_1^{(m_1)}, \ldots,
g_{\max}^{(m_{\max})})$, and suppose $m_{\max} \in \{1,\ldots,g_{\max}-1\}$.
Then there exists $G' \in \sigma^{-1}(S)$ such that $E_{\max}$ is a path in
$G'$, where $E_{\max} = \{e \in E(G') \mid g_e = g_{\max}\}$.
\end{proposition}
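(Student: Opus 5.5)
We take $G'$ to be the $<$-max-linearization of $G$ (Definition~\ref{def:max-linearization}) and verify two things: that $\sigma(G') = S$, and that the edges of edge-girth $g := g_{\max}$ in $G'$ form the path $(v_1,u_1,\ldots,u_c,v_2,\ldots,v_{k-1},v'_l)$ inside the modified cycle $C$. Set $S_0 := S \setminus (g^{(m_{\max})})$.

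\textbf{Edges below the maximum.} Every edge of edge-girth $< g$ keeps its edge-girth through Steps~2 and~3, by Propositions~\ref{prop:remove-girth} and~\ref{prop:contract-girth} applied edge by edge. Since we only ever remove or contract edges of the current maximal class, the hypotheses of these propositions remain valid at each step. For Step~4, the subdivided edge $\{v_1,v_2\}$ lies in $E_{\max}$, so no realizing cycle of a smaller-girth edge uses it, and Proposition~\ref{prop:subdivision-girth}(1) applies. For Step~5, we view the new graph as obtained from the graph before Step~5 by deleting one max edge and then attaching a path. The deletion is harmless by Proposition~\ref{prop:remove-girth}. For the attachment, any new cycle passes through the long path $(v_1,u_1,\ldots,u_c,v_2,\ldots,v_{k-1})$ and so has length at least that of $C$. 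We must check this is $\geq g$, which follows from the choice $l = c+k+g'-g$.

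\textbf{Length of $C$ and the max edges.} After Step~2 the cycle $C$ still has length $g$. Step~3 shortens it by the number of contracted edges, Step~4 lengthens it by $c$, and Step~5 removes the vertices $v_k,v'_{k+1},\ldots,v'_{l-1}$. The bookkeeping should give a final length exactly $g$, with exactly $m_{\max}$ edges on the arc $v_1 \to u_1 \to \cdots \to v'_l$ that is formed from max edges or new edges. Here $c$ counts precisely the max edges deleted in Step~2 and contracted in Step~3, so the total number of max-type edges is $k-1+c$ adjusted by Step~5, which should equal $m_{\max}$. The remaining arc $(v'_l,\ldots,v'_{g'},v_1)$ consists only of non-max edges of the original $C$, since all max edges of $C$ outside the initial subpath were contracted. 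The hypothesis $m_{\max}\le g-1$ guarantees that this arc is nonempty, so the max-type edges form a proper path of $C$ and not the whole cycle.

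\textbf{Main obstacle: distances in the new graph.} We must show that each edge on the new path has edge-girth exactly $g$, i.e.\ no shorter cycle through it, and that no non-max edge acquires edge-girth $g$ (we already know non-max edges keep their old values). Equivalently, in $G'$ minus the new path, the distance between its endpoints $v_1$ and $v'_l$ must be at least the length of the complementary arc. To prove this, we first establish an invariant by induction over the steps: after Step~2, $E_{\max}\cap E(G)$ lies in $C$ only, so any other $v_1$–$v'_l$ route uses only edges of $S_0$-type. Such a route, together with a portion of $C$, would form a cycle through a max edge of the original $G$ of length $< g$, contradicting Proposition~\ref{prop:realizing_shortest}(2) applied to $C$ in $G$. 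The delicate point is that contractions in Step~3 may shorten distances between vertices of $C$. We would handle this by comparing each route in the final graph with a route in the original $G$ that is at most as many edges longer as the number of contracted max edges it passes through, and then using the fact that $c$ accounts for those edges. If this comparison turns out to fail in general, the fallback is to choose $C$ and $k$ extremally, taking $k$ maximal via Lemma~\ref{prop:overlap_path}, so that the max edges of $C$ already form a single arc and Step~3 contracts nothing.
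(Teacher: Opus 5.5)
\paragraph{Step 3 does not preserve the smaller edge-girths.} Your first paragraph has a step that fails: Proposition~\ref{prop:contract-girth} can only be applied once. After the first contraction in Step~3, the cycle $C$ has length $g-1$. Its surviving max edges therefore have edge-girth $g-1$, and the current maximum drops to $g-1$. The hypothesis of the proposition still holds, but its conclusion now protects only edges of edge-girth $<g-1$. An edge of edge-girth $g-1$ lying on a $g$-cycle through two contracted edges ends up on a $(g-2)$-cycle.

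Here is a concrete instance. Let $g=8$ and $C=(v_1,\dots,v_8,v_1)$. Add a path $v_7a_1a_2a_3a_4v_4$ and a path of length $6$ from $a_2$ to $a_3$. Add a private triangle (a new common neighbour of both endpoints) on each of $v_2v_3$, $v_3v_4$, $v_5v_6$, $v_7v_8$, $v_8v_1$, $v_7a_1$, $a_1a_2$, $a_3a_4$, $a_4v_4$. Then:
\begin{itemize}
\item $\sigma(G)=(3^{(27)},7^{(7)},8^{(3)})$, and the max edges are $v_1v_2,v_4v_5,v_6v_7$;
\item the only realizing $8$-cycles are $C$ and $D=(v_4,v_5,v_6,v_7,a_1,a_2,a_3,a_4,v_4)$.
\end{itemize}
With initial arc $(v_1,v_2)$, Step~3 contracts $v_4v_5$ and $v_6v_7$. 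This turns $D$ into a $6$-cycle, so $g_{a_2a_3}$ falls from $7$ to $6$. Steps~4--5 never touch $D$, so the output does not realize $S$. The paper's proof justifies Step~3 by the same appeal to Proposition~\ref{prop:contract-girth}, so it does not supply the missing argument either.

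\paragraph{The decisive step is left open, and the fallback fails.} In the example, neither $C$ nor $D$ has its max edges in a single arc. Lemma~\ref{prop:overlap_path} only controls $E(C_1)\cap E(C_2)$, not contiguity of max edges on one cycle, so it cannot produce such a $C$. Even without contractions, your claim that every new cycle in Step~5 runs through the whole path $(v_1,u_1,\dots,v_{k-1})$ is unjustified. The new edge $\{v_{k-1},v'_l\}$ can close cycles through non-max edges at $v_{k-1}$, or through the abandoned arc $v_k,v'_{k+1},\dots,v'_{l-1}$.

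\paragraph{A route that avoids all of this.} Write $m=m_{\max}$. Orient a realizing $g$-cycle $C$ and let $A_1,\dots,A_r$ be its maximal arcs of non-max edges; their total length is $g-|E(C)\cap E_{\max}|\ge g-m$. Suppose $A_i$ and $A_j$ lie in one component $K$ of $G\setminus E_{\max}$. Let
\begin{itemize}
\item $x_i$ be the first vertex of $A_i$ and $y_j$ the last vertex of $A_j$;
\item $W$ be the arc of $C$ from $y_j$ to $x_i$, whose last edge is a max edge;
\item $P$ be a shortest $x_i$--$y_j$ path in $K$, and $z$ the first vertex of $P$ after $x_i$ that lies on $W$.
\end{itemize}
The subpath $P[x_i,z]$ together with the subarc $W[z,x_i]$ is a cycle through a max edge. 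Hence $|P|\ge g-|W[z,x_i]|$, which is at least the length of the arc of $C$ from $x_i$ to $y_j$ complementary to $W$.

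Choose $A_i,A_j$ so that this arc contains every arc lying in $K$. Then $\mathrm{diam}(K)$ is at least the total length of the arcs in $K$. Chain the components at diametral vertices, so their diameters add up (Propositions~\ref{prop:remove-girth} and~\ref{prop:concat}). This gives $H\in\sigma^{-1}(S_0)$ with $\mathrm{diam}(H)\ge g-m$. Finally, attach a single path of length $m$ between two vertices at distance $g-m$ (Proposition~\ref{prop:add_path}, valid since $g>\max(S_0)$). This yields the required $G'$.
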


\begin{proof}
Let $G \in \sigma^{-1}(S)$ and let $G'$ be the $<$-max-linearization of $G$
(Definition~\ref{def:max-linearization}). Denote $g = g_{\max}$ and $m = m_{\max}$.
We show that $G'$ realizes $S$ and that $E_{\max}$ forms a path in $G'$.
The proof proceeds in two steps: we first verify that the construction
preserves the edge-girth sequence, then describe the resulting structure of
$E_{\max}$ in $G'$.

\paragraph{Preservation of the edge-girth sequence.}

\textit{Step~2.} By Proposition~\ref{prop:remove-girth}, removing
$E_{\max} \setminus E(C)$ does not affect $g_e$ for any
$e \in E(G) \setminus E_{\max}$, nor for any $e \in E(C) \cap E_{\max}$ since
$C$ is a realizing $g$-cycle preserved by the removal. Reconnecting by vertex
identifications does not affect any edge-girth (Proposition~\ref{prop:concat}).
After this step, the number of edges with edge-girth $g$ is $|E(C) \cap E_{\max}| = k - 1$,
and $c = m - (k-1)$, so $m = c + k - 1$.

\textit{Step~3.} By Proposition~\ref{prop:contract-girth}, each contraction of an
edge of $E_{\max}$ within $C$ does not affect $g_e$ for any
$e \in E(G) \setminus E_{\max}$. However, each contraction reduces by $1$ the
length of $C$, hence decreases by $1$ the edge-girth of the remaining edges of
$E(C) \cap E_{\max}$. 

\textit{Step~4.} We apply a $c$-subdivision of $\{v_1,v_2\}$
(Definition~\ref{def:edge-subdivision}). Since the edges of the path
$(v_k, v'_{k+1},\ldots,v'_{g'}, v_1)$ all have edge-girth below $g$ in $G$ by
construction, they each have a realizing cycle avoiding $\{v_1,v_2\}$. By
Proposition~\ref{prop:subdivision-girth}(1) their edge-girth is unchanged.
The new edges $(u_1,\ldots,u_c)$ temporarily have edge-girth
$g_{\{v_1,v_2\}, G} + c < g$, which will be corrected in Step~5.

\textit{Step~5.} We verify that $k \leq l \leq g'$ (with $l := c + k + g' - g$), so that $v'_l$ is a
well-defined vertex of $C$. Since $g - g'$ counts exactly the contractions
performed in Step~3, while $c$ also includes the edges removed in Step~2, we
have $c \geq g - g'$, i.e.\ $k \leq l$. The inequality $l \leq g'$ is
equivalent to $c + k - 1 \leq g - 1$, i.e.\ $m \leq g - 1$,
which holds by hypothesis.
Removing $\{v_{k-1},v_k\}$ and adding $\{v_{k-1},v'_l\}$ closes the cycle
\[
  C' = (v_1,\, u_1,\ldots,u_c,\, v_2,\ldots,v_{k-1},\,
        v'_l,\ldots,v'_{g'},\, v_1),
\]
which has length exactly $g$ by definition of $l$. Each edge of the path
\[
(v_1,u_1,\ldots,u_c,v_2,\ldots,v_{k-1},v'_l)
\]
has $C'$ as its unique realizing cycle, hence edge-girth $g$. Together,
Steps~4--5 restore exactly $c + k - 1 = m$ edges of edge-girth $g$,
so $\sigma(G') = S$.

\paragraph{Structure of $G'$.}
By construction, all edges of $E_{\max}$ in $G'$ lie within the single cycle
$C'$ produced in Steps~4--5, and their union forms the path
$(v_1,u_1,\ldots,u_c,v_2,\ldots,v_{k-1},v'_l)$. 
\end{proof}

\begin{definition}[$>$-max-linearization]
\label{def:max-linearization-2}
Let $G \in \mathcal{G}$ with $g_{\max} = \max(\sigma(G))$ and multiplicity
$m_{\max} \in \{g_{\max}+1, \ldots, \lceil 3g_{\max}/2 \rceil - 1\}$. Set
$g = g_{\max}$ and $E_{\max} = \{e \in E(G) \mid g_e = g\}$.
The \emph{$>$-max-linearization} of $G$ (Figure~\ref{fig_max-lin-steps-2}) is the graph $G'$ constructed as follows.

\begin{enumerate}
  \item Choose two realizing $g$-cycles $C_1$ and $C_2$ such that
    $E_{\max} \cap E(C_1) \neq E_{\max} \cap E(C_2)$.

  \item Remove all edges of $E_{\max} \setminus (E(C_1) \cup E(C_2))$ from $G$
    (Definition~\ref{def:edge-removal}), and set
    $c := |E_{\max} \setminus (E(C_1) \cup E(C_2))|$.
    If necessary, reconnect all components by successive vertex identifications
    (Definition~\ref{def:vertex-id}).

  \item For each $i \in \{1,2\}$, let $j \neq i$. Choose an edge
    $e_i \in E_{\max} \cap E(C_i) \setminus E(C_j)$. Contract all edges of
    $E_{\max} \cap E(C_i) \setminus E(C_j) \setminus \{e_i\}$
    (Definition~\ref{def:edge-contraction}), and perform a $c_i$-subdivision
    of $e_i$ (Definition~\ref{def:edge-subdivision}), where $c_i$ is the
    number of contracted edges, so as to restore the number of edges of
    edge-girth $g$ in $C_i$.

  \item Distribute the $c$ remaining edges of edge-girth $g$ by performing
    subdivisions of edges of $E_{\max} \cap E(C_i) \setminus E(C_j)$ for
    $i \in \{1,2\}$, ensuring that $|E_{\max} \cap E(C_1)| \leq g-1$,
    and restore the edge-girths as in Step~5 of
    Definition~\ref{def:max-linearization}.
\end{enumerate}
\end{definition}

\begin{figure}[H]
\centering
\begin{subfigure}[h]{\textwidth}
\centering
\begin{tikzpicture}[
  every node/.style={circle, draw, fill=white, inner sep=2pt, minimum size=4pt},
  lbl/.style={draw=none, fill=none},
  scale=.75,
]

\begin{scope}[xshift=0cm]
  \node (q1)  at (0, 0)  {};
  \node (q2)  at (0, 1.5)  {};
  \node (q7)  at (0.3, 3)  {};
  \node (q3)  at (1, -1)  {};
  \node (q8)  at (1, 1.5)  {};
  \node (q6)  at (1, 2.5)  {};
  \node (q4)  at (2, 0)  {};
  \node (q5)  at (2, 1.5)  {};
  \node (q9)  at (4, 0)  {};
  \node (q10)  at (4, 2.5)  {};
  \node (q15)  at (4.2, 3.5)  {};
  \node (q11)  at (5, 1.2)  {};
  \node (q14)  at (5, 3)  {};
  \node (q12)  at (6, 0)  {};
  \node (q13)  at (6, 2.5)  {};

  \draw[thick, blue] (q1)  -- (q4);
  \draw[thick, blue] (q1)  -- (q3);
  \draw[thick, dashed, red] (q1)  -- (q2);
  \draw[thick, blue] (q2)  -- (q6);
  \draw[thick, blue] (q2)  -- (q7);
  \draw[thick, blue] (q3)  -- (q4);
  \draw[thick, dashed, red] (q4)  -- (q5);
  \draw[thick, dashed, red] (q4)  -- (q9);
  \draw[thick, blue] (q5)  -- (q6);
  \draw[thick, blue] (q5)  -- (q8);
  \draw[thick, dashed, red] (q6)  -- (q10);
  \draw[thick, blue] (q6)  -- (q7);
  \draw[thick, blue] (q6)  -- (q8);
  \draw[thick, blue] (q9)  -- (q10);
  \draw[thick, blue] (q9)  -- (q11);
  \draw[thick, dashed, red] (q9)  -- (q12);
  \draw[thick, blue] (q10)  -- (q11);
  \draw[thick, blue] (q10)  -- (q15);
  \draw[thick, blue] (q10)  -- (q14);
  \draw[thick, dashed, red] (q12)  -- (q13);
  \draw[thick, dashed, red] (q13)  -- (q14);
  \draw[thick, blue] (q14)  -- (q15);

  \node[lbl, font=\small] at (3, -2.2) {Original Graph};
\end{scope}

\draw[->, thick] (7.8, 0) -- (9.2, 00);

\begin{scope}[xshift=11cm]
  \node (q1)  at (0, 0)  {};
  \node (q2)  at (0, 1.5)  {};
  \node (q7)  at (0.3, 3)  {};
  \node (q3)  at (1, -1)  {};
  \node (q8)  at (1, 1.5)  {};
  \node (q6)  at (1, 2.5)  {};
  \node (q4)  at (2, 0)  {};
  \node (q5)  at (2, 1.5)  {};
  \node (q9)  at (4, 0)  {};
  \node (q10)  at (4, 2.5)  {};
  \node (q15)  at (4.2, 3.5)  {};
  \node (q11)  at (5, 1.2)  {};
  \node (q14)  at (5, 3)  {};
\node[draw=none, fill=none, font=\small] (c1) at (1, 1) {$C_1$};
\node[draw=none, fill=none, font=\small] (c2) at (3, 1) {$C_2$};

  \draw[thick, blue] (q1)  -- (q4);
  \draw[thick, blue] (q1)  -- (q3);
  \draw[thick, dashed, red] (q1)  -- (q2);
  \draw[thick, blue] (q2)  -- (q6);
  \draw[thick, blue] (q2)  -- (q7);
  \draw[thick, blue] (q3)  -- (q4);
  \draw[thick, dashed, red] (q4)  -- (q5);
  \draw[thick, dashed, red] (q4)  -- (q9);
  \draw[thick, blue] (q5)  -- (q6);
  \draw[thick, blue] (q5)  -- (q8);
  \draw[thick, dashed, red] (q6)  -- (q10);
  \draw[thick, blue] (q6)  -- (q7);
  \draw[thick, blue] (q6)  -- (q8);
  \draw[thick, blue] (q9)  -- (q10);
  \draw[thick, blue] (q9)  -- (q11);
  \draw[thick, blue] (q10)  -- (q11);
  \draw[thick, blue] (q10)  -- (q15);
  \draw[thick, blue] (q10)  -- (q14);
  \draw[thick, blue] (q14)  -- (q15);
  \node[lbl, font=\small] at (3, -2.2) {Remove $E_{\max}$ outside $C_1$ and $C_2$ ($c=3$)};
\end{scope}

\end{tikzpicture}
\end{subfigure}
\hfill
\begin{subfigure}[h]{\textwidth}
\centering
\begin{tikzpicture}[
  every node/.style={circle, draw, fill=white, inner sep=2pt, minimum size=4pt},
  lbl/.style={draw=none, fill=none},
  scale=.75,
]

\begin{scope}[xshift=0cm]
  \node (q1)  at (0, 0)  {};
  \node (q2)  at (0, 1.5)  {};
  \node (q7)  at (0.3, 3)  {};
  \node (q3)  at (1, -1)  {};
  \node (q8)  at (1, 1.5)  {};
  \node (q6)  at (1, 2.5)  {};
  \node (q4)  at (2, 0)  {};
  \node (q5)  at (2, 1.5)  {};
  \node (q9)  at (4, 0)  {};
  \node (q10)  at (4, 2.5)  {};
  \node (q16)  at (3, -1)  {};
  \node (q17)  at (5, -1)  {};
  \node (q18)  at (6, 0)  {};
\node[draw=none, fill=none, font=\small] (c1) at (1, 1) {$C_1$};
\node[draw=none, fill=none, font=\small] (c2) at (3, 1) {$C_2$};

  \draw[thick, blue] (q1)  -- (q4);
  \draw[thick, blue] (q1)  -- (q3);
  \draw[thick, dashed, red] (q1)  -- (q2);
  \draw[thick, blue] (q2)  -- (q6);
  \draw[thick, blue] (q2)  -- (q7);
  \draw[thick, blue] (q3)  -- (q4);
  \draw[thick, dashed, red] (q4)  -- (q5);
  \draw[thick, blue] (q4)  -- (q9);
  \draw[thick, blue] (q5)  -- (q6);
  \draw[thick, blue] (q5)  -- (q8);
  \draw[thick, dashed, red] (q6)  -- (q10);
  \draw[thick, blue] (q6)  -- (q7);
  \draw[thick, blue] (q6)  -- (q8);
  \draw[thick, dashed, red] (q9)  -- (q10);
  \draw[thick, blue] (q4)  -- (q16);
  \draw[thick, blue] (q9)  -- (q16);
  \draw[thick, blue] (q9)  -- (q17);
  \draw[thick, blue] (q9)  -- (q18);
  \draw[thick, blue] (q17)  -- (q18);

  \node[lbl, font=\small] at (3, -2.2) {Form $E_{\max} \cap C_i \setminus C_j$ paths};
\end{scope}

\draw[->, thick] (7.8, 0) -- (9.2, 00);

\begin{scope}[xshift=11cm]
  \node (q1)  at (0, 0)  {};
  \node (q2)  at (0, 1.5)  {};
  \node (q7)  at (0.3, 3)  {};
  \node (q3)  at (2, -1)  {};
  \node (q8)  at (1, 1.5)  {};
  \node (q6)  at (1, 2.5)  {};
  \node (q4)  at (2, 0)  {};
  \node (q5)  at (2, 1.5)  {};
  \node (q9)  at (4, 0)  {};
  \node (q16)  at (3, -1)  {};
  \node (q17)  at (5, -1)  {};
  \node (q18)  at (6, 0)  {};
  \node (q19)  at (1, -1)  {};
  \node (q20)  at (4, 3)  {};
  \node (q21)  at (5, 2)  {};
  \node (q22)  at (4, 1)  {};
\node[draw=none, fill=none, font=\small] (c1) at (1, 1) {$C_1$};
\node[draw=none, fill=none, font=\small] (c2) at (3, 2) {$C_2$};

  \draw[thick, dashed, red] (q1)  -- (q4);
  \draw[thick, blue] (q19)  -- (q3);
  \draw[thick, blue] (q19)  -- (q4);
  \draw[thick, dashed, red] (q1)  -- (q2);
  \draw[thick, blue] (q2)  -- (q6);
  \draw[thick, blue] (q2)  -- (q7);
  \draw[thick, blue] (q3)  -- (q4);
  \draw[thick, dashed, red] (q4)  -- (q5);
  \draw[thick, blue] (q4)  -- (q9);
  \draw[thick, blue] (q5)  -- (q6);
  \draw[thick, blue] (q5)  -- (q8);
  \draw[thick, blue] (q6)  -- (q7);
  \draw[thick, blue] (q6)  -- (q8);
  \draw[thick, blue] (q4)  -- (q16);
  \draw[thick, blue] (q9)  -- (q16);
  \draw[thick, blue] (q9)  -- (q17);
  \draw[thick, blue] (q9)  -- (q18);
  \draw[thick, blue] (q17)  -- (q18);

   \draw[thick, dashed, red] (q6)  -- (q20);
  \draw[thick, dashed, red] (q20)  -- (q21);
  \draw[thick, dashed, red] (q21)  -- (q22);
  \draw[thick, dashed, red] (q22)  -- (q5);
\node[lbl, font=\small, align=center] at (3, -2.2) 
  {Insert $c_1 = 1$ edge in $C_1$ and $c_2 = 2$ \\ edge in $C_2$ ($c_1 + c_2 = c$)};
\end{scope}

\end{tikzpicture}
\end{subfigure}
\caption{Steps of the $>$-max-linearization procedure.
\label{fig_max-lin-steps-2}}
\end{figure}

\begin{proposition}
\label{prop:two-cycles}
Let $G \in \mathcal{G}$ with $\sigma(G) = S = (g_1^{(m_1)}, \ldots,
g_{\max}^{(m_{\max})})$, and suppose $m_{\max} \in \{g_{\max}+1, \ldots,
\lceil 3g_{\max}/2 \rceil - 1\}$. Set $g = g_{\max}$. Then there exists
$G' \in \sigma^{-1}(S)$ and two realizing $g$-cycles $C_1$, $C_2$ of $G'$
such that
\[
  E_{\max} \subseteq E(C_1) \cup E(C_2), \quad |E_{\max} \cap E(C_1)| \leq g - 1
\]
and $E_{\max} \cap E(C_2) \setminus E(C_1)$ is a path or a cycle in $G'$,
where $E_{\max} = \{e \in E(G') \mid g_e = g\}$.
\end{proposition}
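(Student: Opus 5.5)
The plan is to take $G' $ to be the $>$-max-linearization of an arbitrary $G \in \sigma^{-1}(S)$ (Definition~\ref{def:max-linearization-2}) and then verify two things: the edge-girth sequence is preserved, and the resulting $E_{\max}$ has the stated two-cycle structure. This follows the template of Proposition~\ref{prop:max-linear}.

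\textbf{Step 1: choice of $C_1, C_2$.} Since $m_{\max} > g$, a single realizing $g$-cycle cannot contain all of $E_{\max}$. So pick any realizing cycle $C_1$, an edge $e \in E_{\max} \setminus E(C_1)$, and a realizing cycle $C_2$ for $e$. Then $E_{\max} \cap E(C_1) \neq E_{\max} \cap E(C_2)$. By Lemma~\ref{prop:overlap_path} we may replace $C_2$ by a cycle with the same set of girth-$g$ edges such that $E(C_1) \cap E(C_2)$ is a path $P$. Consequently $C_1 \setminus P$ and $C_2 \setminus P$ are two paths with common endpoints.

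\textbf{Step 2: removal.} By Proposition~\ref{prop:remove-girth}, deleting $E_{\max} \setminus (E(C_1) \cup E(C_2))$ and reconnecting by vertex identifications leaves unchanged all edge-girths below $g$. It also leaves unchanged the edge-girths on $C_1 \cup C_2$, since both cycles survive and no cycle got shorter. This yields $c$ edges to restore, with $m_{\max} = c + |E_{\max} \cap (E(C_1) \cup E(C_2))|$.

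\textbf{Step 3: contractions and subdivisions on the private parts.} In each $C_i$, contract all but one edge $e_i$ of $E_{\max} \cap E(C_i) \setminus E(C_j)$, then subdivide $e_i$ by the same number $c_i$. Proposition~\ref{prop:contract-girth} shows the contractions do not affect smaller girths. Proposition~\ref{prop:subdivision-girth}(1) shows the subdivisions do not either, since every smaller-girth edge has a realizing cycle avoiding $E_{\max}$. Because the lengths of $C_1$ and $C_2$ return to $g$, the private girth-$g$ edges of each $C_i$ now form a single subpath.

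\textbf{Step 4: distributing the remaining $c$ edges.} We subdivide $e_1$ and $e_2$ further, splitting $c = c'_1 + c'_2$, so that $|E_{\max} \cap E(C_1)| \le g-1$. We then shorten the offending cycle as in Step~5 of Definition~\ref{def:max-linearization} by rerouting an endpoint edge to a vertex further along the cycle, so that its length returns to exactly $g$. Feasibility of the split uses $m_{\max} \le \lceil 3g/2\rceil - 1$ together with $|E(C_1)\cap E(C_2)| \le \lfloor g/2\rfloor$ (Proposition~\ref{prop:overlap}). Concretely, we need integers $c'_1 \le g-1-|E_{\max}\cap E(C_1)|$ and $c'_2$ such that $C_2$ can still be closed at length $g$, which is a counting check analogous to $k \le l \le g'$ in Proposition~\ref{prop:max-linear}.

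\textbf{Main obstacle.} The delicate point is this final re-closing step. When several cycles sharing the path $P$ are shortened or rerouted, one must ensure that no new short cycle is created through $P$ that would lower some girth-$g$ edge below $g$, and that no girth-$g$ edge acquires a realizing cycle other than $C_1$ or $C_2$. I would handle this by appealing to Proposition~\ref{prop:realizing_shortest}(2): rerouted chords are placed between vertices whose distance along $C_i$ matches their graph distance, so any new cycle through a new edge has length at least $g$. The cycle $C_1 \triangle C_2$ formed by the two private paths has length $2g - 2|P| \ge g$, which guarantees no shortcut. Finally, $E_{\max}\cap E(C_2)\setminus E(C_1)$ is a path. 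It is a cycle only in the degenerate case where $P$ contains no girth-$g$ edge and all of $C_2$'s private edges are girth $g$ with endpoints merged by the contractions, which is the reason the statement allows either a path or a cycle.
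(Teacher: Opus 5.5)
Your overall route is the paper's. You take the $>$-max-linearization of some $G\in\sigma^{-1}(S)$ and check removal, contraction and subdivision with Propositions~\ref{prop:remove-girth}, \ref{prop:contract-girth} and~\ref{prop:subdivision-girth}(1). Normalizing $C_2$ first with Lemma~\ref{prop:overlap_path} is a sensible addition: it is what actually justifies $k_{12}:=|E(C_1)\cap E(C_2)|\le\lfloor g/2\rfloor$ outside the two-cycle graph of Proposition~\ref{prop:overlap}.

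\textbf{The gap is Step~4.} You leave it as ``a counting check analogous to $k\le l\le g'$''. That count is the only place where $m_{\max}\le\lceil 3g/2\rceil-1$ is used, and with your set-up it does not close. Write $m_1,m_2,m_{12}$ for the numbers of girth-$g$ edges private to $C_1$, private to $C_2$, and shared. You must place $c=m_{\max}-m_1-m_2-m_{12}$ further edges. Suppose $C_2$ stays glued to $C_1$ along $P$, and each cycle is re-closed only by skipping its own private edges of smaller girth. Then there are at most $(g-m_1-k_{12})+(g-m_2-k_{12})$ free positions, so you would need $m_{\max}\le 2g-2k_{12}+m_{12}$. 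The hypotheses do not give this once $k_{12}=\lfloor g/2\rfloor$, $m_{12}=0$ and $g\ge 4$.

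This happens concretely for $g=4$. Take the $4$-cycles $(a,b,c,d)$ and $(a,b,c,e)$, put pendant triangles on $ab$, $bc$, $da$, $ea$, and attach a path of length $3$ between $a$ and $b$. The edge-girth sequence is $(3^{(12)},4^{(5)})$, and your Step~1 may pick exactly these two cycles. Three edges must then be placed, but only $da$ and $ea$ can be skipped.

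The paper closes the count by allowing $C_2$ to be detached from the shared portion, so that $C_2$ can absorb up to $g-m_2$ new edges. With $p=(g-m_1-k_{12})+(g-m_2)$ one gets
\[
p-c \;=\; 2g-k_{12}+m_{12}-m_{\max} \;\ge\; \lfloor g/2\rfloor+1-k_{12}+m_{12} \;\ge\; 1,
\]
and the spare position is what keeps $|E_{\max}\cap E(C_1)|\le g-1$. This detachment is also where the ``or a cycle'' alternative comes from. $E_{\max}\cap E(C_2)\setminus E(C_1)$ is a cycle when $C_2$ ends up edge-disjoint from $C_1$ and consists entirely of girth-$g$ edges. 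In the example above, all three extra edges go into a detached $C_2$, which becomes a full $4$-cycle of girth-$4$ edges.

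Your explanation via endpoints ``merged by the contractions'' is not how this case arises. Every contraction on $C_i$ is compensated by subdividing $e_i$ on the same cycle, so the private part of $C_2$ keeps its two endpoints whenever $P\neq\varnothing$.
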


\begin{proof}
Let $G \in \sigma^{-1}(S)$ and let $G'$ be the $>$-max-linearization of $G$
(Definition~\ref{def:max-linearization-2}). Denote $g = g_{\max}$ and $m = m_{\max}$.
We show that $G'$ realizes $S$ and satisfies the structural properties stated
in the proposition. The proof proceeds in two steps: we first verify that the
construction preserves the edge-girth sequence, then describe the resulting structure of $E_{\max}$ relative to $C_1$ and $C_2$ in $G'$.

\paragraph{Preservation of the edge-girth sequence.}

Since $m \geq g+1$, the graph $G$ contains at least two edges of edge-girth
$g$ that do not lie on a common realizing $g$-cycle, so the choice in Step~1 is valid.

\textit{Step~2.} By Proposition~\ref{prop:remove-girth}, removing
$E_{\max} \setminus (E(C_1) \cup E(C_2))$ does not affect $g_e$ for any
$e \in E(G) \setminus E_{\max}$, nor for any $e \in E(C_1) \cap E(C_2) \cap E_{\max}$ since
$C_1$ and $C_2$ are realizing $g$-cycles preserved by the removal. Reconnecting by vertex identifications does
not affect any edge-girth (Proposition~\ref{prop:concat}). After this step,
the number of edges with edge-girth $g$ is $m' := |E_{\max} \cap (E(C_1) \cup
E(C_2))|$, and $c = m - m'$.

\textit{Step~3.} For each $i \in \{1,2\}$, contracting the edges of
$E_{\max} \cap E(C_i) \setminus E(C_j)$ does not affect $g_e$ for any
$e \in E(G) \setminus E_{\max}$ (Proposition~\ref{prop:contract-girth}).
Each contraction reduces the length of $C_i$ by $1$, hence decreases by $1$
the edge-girth of the remaining edges of $E_{\max} \cap E(C_i)$. The
subsequent $c_i$-subdivision of $e_i$ restores the number of edges of
edge-girth $g$ in $C_i$. By Proposition~\ref{prop:subdivision-girth}(1),
edges outside $C_i$ are unaffected. After Step~3, the edges of
$E_{\max} \cap E(C_i) \setminus E(C_j)$ form a single path within $C_i$,
with edge-girth $g$.

\textit{Step~4.} The $c$ remaining edges are distributed by subdivisions
within the paths $E_{\max} \cap E(C_i) \setminus E(C_j)$, and the edge-girths
are restored as in Steps~4--5 of Definition~\ref{def:max-linearization}. By
the same argument as in the proof of Proposition~\ref{prop:max-linear}, each
restored path has edge-girth $g$ and $\sigma(G') = S$.

\textit{Feasibility of Step~4.}
Denote $m_1 := |E_{\max} \cap E(C_1) \setminus E(C_2)|$,
$m_2 := |E_{\max} \cap E(C_2) \setminus E(C_1)|$,
$m_{12} := |E_{\max} \cap E(C_1) \cap E(C_2)|$, and
$k_{12} := |E(C_1) \cap E(C_2)|$.
The number of edges of edge-girth $g$ that can still be inserted is
\[
  p \;:=\; (g - m_2 - k_{12}) + (g - m_2) \;=\; 2g - m_1 - m_2 - k_{12},
\]
where the first term counts the number of additional edges of edge-girth $g$
that can be inserted into $C_1$, and the second counts those that can be
inserted into $C_2$ (possibly after disconnecting $C_2$ from the shared
portion $E(C_1) \cap E(C_2)$). We compute
\[
  p - c \;=\; 2g - m_1 - m_2 - k_{12} - (m - m_1 - m_2 - m_{12})
  \;=\; 2g - k_{12} + m_{12} - m.
\]
Since $m \leq \lceil 3g/2 \rceil - 1$,
\[
  p - c \;\geq\; 2g - k_{12} + m_{12} - \lceil 3g/2 \rceil + 1
  \;=\; \lfloor g/2 \rfloor - k_{12} + m_{12} + 1.
\]
Since $m_{12} \leq k_{12} \leq \lfloor g/2 \rfloor$
(Proposition~\ref{prop:overlap}), we have $p - c \geq 1$, confirming that
the $c$ additional edges can be inserted. Moreover, since $p \geq c + 1$,
there remains at least one unused slot in $C_1$, ensuring that
$|E_{\max} \cap E(C_1)| \leq g - 1$ after Step~4.

\paragraph{Structure of $G'$.}
By Step~3, the edges of $E_{\max} \cap E(C_i) \setminus E(C_j)$ form a
path within $C_i$ for each $i \in \{1,2\}$. By Step~4, the additional edges
are distributed within these paths. Hence $E_{\max} \cap E(C_2) \setminus
E(C_1)$ forms a path or a cycle in $G'$, and $E_{\max} \subseteq E(C_1) \cup E(C_2)$
by construction. Step~4 guarantees $|E_{\max} \cap E(C_1)| \leq g-1$, thus
$G'$ satisfies all the required properties.
\end{proof}

\subsection{Characterizing \texorpdfstring{$\mathcal{D}_{S}^{(g)}$}{D}}

\begin{proposition}
\label{prop:d_low_diam}
Let $S \in \mathcal{S}^*$ be non-empty and $g > \max(S)$. Then
\[
  \mathcal{D}_{S,<}^{(g)} = \{\max(1,\, g - d^*_S),\, \ldots,\, g-1\}.
\]
\end{proposition}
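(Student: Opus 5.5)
We prove the two inclusions separately. Throughout, fix $S \in \mathcal{S}^*$ non-empty and $g > \max(S)$, and write $d = d^*_S$.

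\emph{Sufficiency} ($\supseteq$). Let $m \in \{\max(1, g-d), \ldots, g-1\}$. By Remark~\ref{rem:diam-welldefined}, choose $G \in \sigma^{-1}(S)$ with $\mathrm{diam}(G) = d$. Pick two vertices at distance exactly $g - m$. This is possible because $1 \leq g-m \leq d$, and along a shortest path realizing the diameter every intermediate distance occurs. We then apply Proposition~\ref{prop:add_path} with $k = m$. Its hypothesis $k \geq \max(S) - d_G(v,v')$ holds, since $m + (g-m) = g > \max(S)$. The resulting graph $\Gamma^{m}_{v,v'}(G)$ has edge-girth sequence $S \uplus (g^{(m)})$, so $m \in \mathcal{D}_{S,<}^{(g)}$.

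\emph{Necessity} ($\subseteq$). Let $m \in \{1,\ldots,g-1\}$ with $S' := S \uplus (g^{(m)}) \in \mathcal{S}^*$. Since $g = \max(S')$ and $m \leq g-1$, Proposition~\ref{prop:max-linear} gives $G' \in \sigma^{-1}(S')$ in which $E_{\max}$ is a path $P$ with endpoints $a,b$ and exactly $m$ edges. Remove the edges of $P$. By Proposition~\ref{prop:remove-girth}, the remaining graph $H$ keeps all edge-girths of the edges not in $E_{\max}$. We must check that $H$ is connected: the internal vertices of $P$ become isolated and are discarded, and $a,b$ remain joined through the rest of a realizing $g$-cycle for an edge of $P$. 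That cycle consists of $P$ together with a path of length $g-m \geq 1$ avoiding $P$. So $\sigma(H) = S$, where we must also confirm that no internal vertex of $P$ carries other edges.

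Next, take any $e \in P$ with a realizing cycle $C$. By Proposition~\ref{prop:realizing_shortest}(1)--(2), the complementary arc from $a$ to $b$ is a shortest $a$--$b$ path in $G' \setminus E(P)$, and its length is $g-m$. Moreover, any shorter $a$--$b$ path in $H$ would close with $P$ into a cycle through $e$ of length less than $g$. Hence $d_H(a,b) = g-m$, which gives $g - m \leq \mathrm{diam}(H) \leq d$, i.e.\ $m \geq g-d$. Combined with $m \geq 1$, this yields the claim.

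\emph{Main obstacle.} The delicate point is the necessity step, specifically ensuring that deleting the path leaves exactly a realization of $S$. Internal vertices of $P$ could have degree greater than $2$ only through edges of edge-girth less than $g$. Such an edge cannot lie on a shorter cycle passing through edges of $P$, but it could hang off an internal vertex, for example through a triangle attached there. In that case deleting $P$ may disconnect $H$. We fix this by reconnecting the components via vertex identification (Proposition~\ref{prop:concat}). The distance between $a$ and $b$ is unaffected as long as $a$ and $b$ stay in the same component, which holds thanks to the complementary arc. If needed, we instead identify the pieces at vertices away from that arc, so the bound $d_H(a,b) = g-m$ is preserved.
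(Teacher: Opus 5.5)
Your proposal is correct and follows the paper's own argument. Sufficiency attaches a path of length $m$ between two vertices at distance $g-m$ in a diameter-maximizing realization of $S$ (Proposition~\ref{prop:add_path}), and necessity passes to the $<$-max-linearization, deletes the $E_{\max}$-path, and uses that its endpoints lie at distance exactly $g-m$ in the remaining realization of $S$, giving $g-m\le d^*_S$. The one step to anchor more carefully is the existence of a realizing $g$-cycle made of all of $P$ plus a complementary arc of length $g-m$: this does not follow from ``$E_{\max}$ is a path'' alone, since a realizing cycle of an edge of $P$ can leave $P$ at an internal vertex and $a,b$ can even end up in different components of $G'\setminus E(P)$. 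You should therefore take it to be the cycle $C'$ built in Step~5 of the construction, which is exactly what the paper's ``by construction'' relies on.
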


\begin{proof}
We proceed by double inclusion: the inclusion $(\supset)$ follows by explicit
path attachment, while $(\subset)$ uses Proposition~\ref{prop:max-linear}
and a diameter argument by contradiction.

$(\supset)$ Let $G \in \sigma^{-1}(S)$ attain diameter $d^*_S$, and let
$m \in \{\max(1, g - d^*_S), \ldots, g-1\}$. Choose $v, v' \in V(G)$ with
$d_G(v,v') = g - m$, which is possible since $g - m \leq d^*_S$. In particular,
$m = g - d_G(v,v') \geq \max(S) - d_G(v,v')$, so Proposition~\ref{prop:add_path}
yields
\[
  \sigma\!\left(\Gamma^m_{v,v'}(G)\right) = S \uplus (g^{(m)}),
\]
hence $m \in \mathcal{D}_{S,<}^{(g)}$.

$(\subset)$ If $g - d^*_S \leq 1$, the inclusion is immediate. Otherwise, suppose
for contradiction that $m \in \mathcal{D}_{S,<}^{(g)}$ for some
$m \in \{1, \ldots, g - d^*_S - 1\}$. Let $G \in \sigma^{-1}(S \uplus (g^{(m)}))$
satisfy Proposition~\ref{prop:max-linear}, and let
$E_{\max} = \{e \in E(G) \mid g_e = g\}$. By construction, $G$ is obtained from
$G^* := G \setminus E_{\max}$ by a path attachment of length $m$ between two
vertices $v, v'$ with $d_{G^*}(v,v') = g - m$ (Proposition~\ref{prop:add_path}).
By Proposition~\ref{prop:remove-girth}, $G^* \in \sigma^{-1}(S)$. Since
$g - m > d^*_S$, this contradicts the maximality of $d^*_S$.
\end{proof}

\begin{lemma}
\label{lem:diam-monotone}
Let $S \in \mathcal{S}^*$, $g > \max(S)$, and $\max(1, g - d^*_S) \leq k \leq g-1$.
Then
\[
  d^*_{S \uplus (g^{(k)})} \leq d^*_{S^+} + k - \max(1, g - d^*_S).
\]
\end{lemma}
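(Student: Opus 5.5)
Throughout, I read $S^+$ as $S \uplus (g^{(k_0)})$ with $k_0 := \max(1, g - d^*_S)$, which is realizable by Proposition~\ref{prop:d_low_diam}. The plan is a \emph{transfer} argument. Take $G \in \sigma^{-1}(S \uplus (g^{(k)}))$ with $\mathrm{diam}(G) = d^*_{S \uplus (g^{(k)})}$. From $G$ I will build some $H \in \sigma^{-1}(S^+)$ using only two kinds of moves:
\begin{itemize}
\item[(i)] contractions of edges of $E_{\max} = \{e \mid g_e = g\}$, where the number of contractions is exactly $k - k_0$;
\item[(ii)] operations that never decrease any distance, namely subdivisions and edge removals that keep the graph connected.
\end{itemize}
Contracting one edge decreases each distance by at most $1$. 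Hence $\mathrm{diam}(G) \le \mathrm{diam}(H) + (k - k_0) \le d^*_{S^+} + k - k_0$, which is the claim.

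\textbf{Step 1: reduce to a controlled $E_{\max}$.} Since $k \le g-1$, I want $E_{\max}$ to be a path in $G$, as in Proposition~\ref{prop:max-linear}. The difficulty is that the $<$-max-linearization may change the diameter, so I cannot simply replace $G$ by it. Instead I will argue directly in $G$. Fix a realizing $g$-cycle $C$ for some edge of $E_{\max}$.
\begin{itemize}
\item The edges of $E_{\max}$ lying on $C$ are handled by contraction.
\item An edge of $E_{\max}$ off $C$ lies on its own $g$-cycle, and that cycle shares, by Lemma~\ref{prop:overlap_path}, a path with $C$.
\end{itemize}
Because $k < g$, I expect a counting argument in the spirit of Proposition~\ref{prop:overlap} to show the following. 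The $E_{\max}$-edges form arcs attached to the subgraph $G^* = G \setminus E_{\max}$, and each arc $A$ joins two vertices $v,v'$ with $|A| + d_{G^*}(v,v') = g$.

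\textbf{Step 2: contract and repair.} Inside one such arc, contract $k-k_0$ edges. Choosing the arc to be the one determined by $C$, this arc must have length at least $k - k_0 + 1$; otherwise I distribute the contractions over several arcs.
\begin{itemize}
\item By Proposition~\ref{prop:contract-girth}, the edge-girths of $S$-edges are unchanged.
\item The arc has become shorter, so its closing cycle now has length $< g$. Remaining $E_{\max}$-edges on other arcs, however, keep girth $g$, since their realizing cycles avoid the contracted edges or are only lengthened.
\item To restore girth exactly $g$ on the shortened arc, I reroute one endpoint of the arc. This is the analogue of Step~5 of Definition~\ref{def:max-linearization}: move it to a vertex of $G^*$ farther from the other endpoint, with $d_{G^*}$ larger by exactly $k-k_0$.
\end{itemize}
The existence of such a vertex comes from Proposition~\ref{prop:realizing_shortest}(3) applied along a diametral structure of $G^*$. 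It also uses $g - k_0 \le d^*_S$, which is the defining property of $k_0$.

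\textbf{Main obstacle.} The rerouting in Step~2 must not decrease any distance in the graph, which is what move (ii) requires. Replacing an edge $\{x,y\}$ by $\{x,y'\}$ with $d(x,y') > d(x,y)$ can shorten paths elsewhere. I would first try to choose $y'$ along a shortest path continuing away from $x$ on $C$. Distances through the rerouted edge then change by a bounded amount that is absorbed in the $k-k_0$ slack: each contraction and reroute pair should cost at most $1$ in total. If that fails, I would fall back on Proposition~\ref{prop:add_path} as follows. Detach the arc entirely, obtaining $G^*$. Re-attach a path of length $k_0$ between the endpoints of a shortest path of length $g-k_0$ inside a geodesic of $G^*$. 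Then compare $\mathrm{diam}(G)$ with $\mathrm{diam}(G^*) + (\text{arc length})$ directly, using that a path of length $k \le g-1$ can add at most $k - k_0$ beyond what a path of length $k_0$ already contributes.
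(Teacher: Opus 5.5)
\paragraph{The central repair step fails.} Your bookkeeping is sound. Contractions lower each distance by at most $1$, and subdivisions and connectivity-preserving deletions lower none. So an $H\in\sigma^{-1}(S^+)$ reached from an extremal $G$ by exactly $k-k_0$ contractions plus such moves would prove the lemma.

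The gap is that these moves never reach $\sigma^{-1}(S^+)$. Your repair re-attaches the shortened arc at a farther vertex, i.e.\ it \emph{adds} an edge, and the resulting loss is not absorbed by the slack. Take the simplest configuration:
\begin{itemize}
\item $E_{\max}$ is a path of length $k$ from $v$ to $v'$, with internal vertices of degree $2$ and $d_{G^*}(v,v')=g-k$;
\item after the contractions it becomes a path of length $k_0$ from $v$ to a vertex $v''$ with $d_{G^*}(v,v'')=g-k_0$.
\end{itemize}
Then $d_H(v,v'')\le k_0$. In $G$, a $v$--$v''$ route either stays in $G^*$, or traverses the whole old path and then covers $d_{G^*}(v',v'')\ge k-k_0$. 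Hence $d_G(v,v'')\ge\min(g-k_0,\,2k-k_0)$, so this distance drops by at least $\min(g-2k_0,\,2(k-k_0))$. That is strictly larger than $k-k_0$ whenever $k_0<k<g-k_0$.

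So the pairwise estimate behind $\mathrm{diam}(G)\le\mathrm{diam}(H)+(k-k_0)$ fails for the $H$ you build. You would have to show separately that no diametral pair of $G$ is shortened by more than $k-k_0$, which is the real content of the lemma. Your hope that ``each contraction and reroute pair should cost at most $1$'' is unproven. Your fallback (a path of length $k$ adds at most $k-k_0$ beyond a path of length $k_0$) is the statement itself.

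\paragraph{Supporting steps are also missing.}
\begin{itemize}
\item \emph{The vertex $v''$ need not exist.} The bound $g-k_0\le d^*_S$ concerns a diameter-maximizing realization of $S$. It says nothing about $G^*=G\setminus E_{\max}$, which is whatever realization sits inside your extremal $G$. Proposition~\ref{prop:realizing_shortest}(3) only gives distances up to $\lfloor g_e/2\rfloor$ with $g_e<g$, never a vertex at distance $g-k_0$ from a prescribed endpoint.
\item \emph{Step~1 is only conjectured, and fails for general realizations.} A realizing $g$-cycle can carry two $E_{\max}$-arcs separated by lower-girth segments. For example, take a $6$-cycle whose edges $v_1v_2$ and $v_4v_5$ have girth $6$, while the other four lie in pendant triangles. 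Each arc's closing route must then avoid the other arc, so $|A|+d_{G^*}(v,v')$ can exceed $g$; here $d_{G^*}(v_1,v_2)=\infty$. Nothing in the proposal shows that an extremal $G$ avoids such configurations.
\item \emph{Your Step~2 claim breaks in the same situation.} Contraction never lengthens a cycle. Contracting edges of one arc shortens the shared $g$-cycle and lowers the girth of the other arc's edges too.
\end{itemize}

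\paragraph{How the paper proceeds.} The paper does not attempt a global transfer. It first normalizes an extremal $G$: a piece of $E_{\max}$ of length $k_1<\lceil g/2\rceil$ is a shortcut across vertices at distance $g-k_1$, and moving its edges onto the other piece does not lower the diameter. So $E_{\max}$ becomes one path of length $k$ attached at $G^*$-distance $g-k$. It then inducts on $k-k_0$ via $d^*_{S\uplus(g^{(k)})}\le d^*_{S\uplus(g^{(k-1)})}+1$, moving the attachment by one unit per step. That unit step is justified in the paper in a single sentence, and making it rigorous needs the same kind of explicit comparison. On either route, the missing ingredient is a choice of new attachment point, in some realization of $S$, that provably does not cut a diametral pair of $G$ by more than the allowed amount.
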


\begin{proof}
We proceed by induction on $k - \max(1, g - d^*_S) \geq 0$.

\textit{Base case.} If $k = \max(1, g - d^*_S)$, then $S \uplus (g^{(k)}) =
S^+$ by definition and the inequality holds with equality.

\textit{Inductive step.} Assume the result holds for $k-1$. Let
$G \in \sigma^{-1}(S \uplus (g^{(k)}))$ attain diameter $d^*_{S \uplus
(g^{(k)})}$. 
We first claim that all $k$ edges of $E_{\max} := \{e \in E(G) \mid g_e =
g\}$ lie on a single realizing $g$-cycle. Suppose instead
that they are split across two realizing $g$-cycles, as paths of lengths
$k_1 + k_2 = k$ with $k_1, k_2 \geq 1$. Since $k \leq g - 1 <
2\lceil g/2 \rceil$, at least one of the two paths, say of length $k_1$,
satisfies $k_1 < \lceil g/2 \rceil$. By Proposition~\ref{prop:add_path},
this path connects two vertices at distance $g - k_1 > \lfloor g/2 \rfloor$
in the rest of the graph, thereby creating a shortcut of length $k_1$
between them, which strictly decreases the diameter compared to the graph
without this attachment. Removing this path and re-attaching its $k_1$ edges
by extending the other path (which is possible since $k \leq g-1$) does not
decrease the diameter. Hence we may assume $E_{\max}$ forms a single path of
length $k$, attached between two vertices at distance $g - k$ in a graph $G^* \in \sigma^{-1}(S)$.

Adding one further edge of edge-girth $g$
extends this path by $1$ at most and reduces the required path in $\sigma^{-1}(S)$
by $1$, so the diameter increases by at most $1$. Therefore
\[
  d^*_{S \uplus (g^{(k)})} \leq d^*_{S \uplus (g^{(k-1)})} + 1
  \leq d^*_{S^+} + (k-1) - \max(1,g-d^*_S) + 1
  = d^*_{S^+} + k - \max(1,g-d^*_S). \qedhere
\]
\end{proof}

\begin{proposition}
\label{prop:d_high_diam}
Let $S \in \mathcal{S}^*$ be non-empty and $g > \max(S)$. 

Set
$S^+ := S \uplus (g^{(\max(1,\, g - d^*_S))})$. Then
\[
  \mathcal{D}_{S,>}^{(g)}
  = \bigl\{\max \bigl(g+1,\,2g - (d^*_S + d^*_{S^+})\bigr),\,\ldots,\,
    \lceil 3g/2 \rceil - 1\bigr\}.
\]
\end{proposition}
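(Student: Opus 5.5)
We argue by double inclusion, modelled on the proof of Proposition~\ref{prop:d_low_diam}. We write $k_0 := \max(1, g - d^*_S)$, so that $S^+ = S \uplus (g^{(k_0)})$. Since $k_0 \in \mathcal{D}_{S,<}^{(g)}$ by Proposition~\ref{prop:d_low_diam}, $S^+$ is realizable and $d^*_{S^+}$ is well defined.

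For $(\supset)$, let $m$ lie in the stated range and write $m = m_1 + m_2$. We take $G^+ \in \sigma^{-1}(S^+)$ attaining diameter $d^*_{S^+}$. We may assume by Proposition~\ref{prop:max-linear} and Lemma~\ref{lem:diam-monotone} that it is built from some $G \in \sigma^{-1}(S)$ by a path of length $k_0$. We then attach a second path of length $m_2 = m - k_0$ between two vertices $v, v'$ of $G^+$ with $d_{G^+}(v,v') = g - m_2$. This is possible as soon as $g - m_2 \leq d^*_{S^+}$ and $m_2 \leq g$. By Proposition~\ref{prop:add_path}, the new edges get edge-girth $g$ and all other values are unchanged, since $g = \max(S^+)$. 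The case split we expect is this:
\begin{itemize}
\item If $g - k_0 = d^*_S$ (the generic case), the condition reads $m \geq 2g - d^*_S - d^*_{S^+}$.
\item If $k_0 = 1$, we instead start from a constant-girth-cycle style construction and use the lower bound $g+1$, together with the increment argument of Proposition~\ref{prop:constant} that attaches $m' \geq \lceil g/2 \rceil$ edges on the $g$-cycle itself.
\end{itemize}
In both cases we must also check that $m_2 \geq 1$ and $g - m_2 \geq 1$ hold throughout the range up to $\lceil 3g/2 \rceil - 1$. Where $g - m_2 \leq \lfloor g/2 \rfloor$, Proposition~\ref{prop:realizing_shortest}(3) supplies the needed pair of vertices on a realizing $g$-cycle.

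For $(\subset)$, suppose $m \in \mathcal{D}_{S,>}^{(g)}$ with $g+1 \leq m < 2g - d^*_S - d^*_{S^+}$. We take a realization and apply Proposition~\ref{prop:two-cycles}. This gives two realizing cycles $C_1, C_2$ with $E_{\max} \subseteq E(C_1) \cup E(C_2)$, with $m_1 := |E_{\max} \cap E(C_1)| \leq g-1$, and with $P_2 := E_{\max} \cap E(C_2) \setminus E(C_1)$ a path, of length $m_2 = m - m_1$. Removing $P_2$ leaves a graph $H$ realizing $S \uplus (g^{(m_1)})$. Removing all of $E_{\max}$ leaves $G^* \in \sigma^{-1}(S)$ (Proposition~\ref{prop:remove-girth}). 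By Proposition~\ref{prop:realizing_shortest}(1) applied to $C_2$, the endpoints of $P_2$ lie at distance $g - m_2$ in $H$. By Proposition~\ref{prop:realizing_shortest}(1) applied to $C_1$, the $E_{\max}$-path on $C_1$ joins two vertices at distance $g - m_1$ in $G^*$. Hence $g - m_1 \leq d^*_S$, and
\[
  g - m_2 \leq d^*_{S \uplus (g^{(m_1)})} \leq d^*_{S^+} + m_1 - k_0,
\]
by Lemma~\ref{lem:diam-monotone}. Summing gives $2g - m - (m_1 - k_0) \leq d^*_S + d^*_{S^+} + \ldots$. At this point we must show that the slack $m_1 - k_0$ cancels: the diameter gained by lengthening the $C_1$-path is exactly compensated by the loss in $g - m_1$. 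Concretely, we will bound
\[
  (g - m_1) + (g - m_2) \leq d^*_S + d^*_{S^+}
\]
directly, using $g - m_1 = d_{G^*}(v,v')$ and the monotonicity estimate in the form $d^*_{S \uplus (g^{(m_1)})} - d^*_{S^+} \leq m_1 - k_0$, while $g - m_1 \leq g - k_0 \leq d^*_S$ with the loss $m_1 - k_0$.

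The main obstacle is exactly this bookkeeping. A naive sum only yields $m \geq 2g - d^*_S - d^*_{S^+} - (m_1 - k_0)$. We would try first to choose the decomposition (via the freedom in Step~4 of the $>$-max-linearization) so that $m_1 = k_0$ whenever possible, pushing all surplus edges onto $C_2$. We must also treat separately the case where $P_2$ closes into a cycle, and the shared portion $E(C_1) \cap E(C_2)$, using Proposition~\ref{prop:overlap}.
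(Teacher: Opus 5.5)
Your plan is the paper's: $(\supset)$ by attaching $m-k_0$ edges of girth $g$ to a diameter-maximal realization of $S^+$, and $(\subset)$ via Proposition~\ref{prop:two-cycles}, deleting the $C_2$-part of $E_{\max}$, and Lemma~\ref{lem:diam-monotone}.

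\textbf{The $(\subset)$ direction is not closed.} You treat the term $m_1-k_0$ as an obstacle and propose to remove it by forcing $m_1=k_0$ through the freedom in Step~4 of the $>$-max-linearization. This is unsupported. Proposition~\ref{prop:two-cycles} gives no control on $m_1$ beyond $m_1\le g-1$. The edges of $E_{\max}$ already on $C_1$ after Step~3, shared ones included, stay there, and $k_0$ is a global quantity unrelated to the chosen cycles.

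It is also unnecessary. The slack only appears because you added the inequality $g-m_1\le d^*_S$. Your own chain
$g-m_2\le d^*_{S\uplus(g^{(m_1)})}\le d^*_{S^+}+m_1-k_0$,
with $m_2=m-m_1$, reads $g-m+m_1\le d^*_{S^+}+m_1-k_0$. So $m_1$ cancels, and
$m\ge g+k_0-d^*_{S^+}\ge 2g-d^*_S-d^*_{S^+}$,
because $k_0\ge g-d^*_S$ by definition. This is exactly the paper's computation.

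A lower bound on $m_1$ is needed only to check the hypothesis $k_0\le m_1$ of Lemma~\ref{lem:diam-monotone}. Get it from Proposition~\ref{prop:d_low_diam} applied to $H$, as the paper does. Your argument via ``the $E_{\max}$-path on $C_1$'' does not work as stated, since Proposition~\ref{prop:two-cycles} does not say that $E_{\max}\cap E(C_1)$ is a path. The cycle case is one line: $m=m_1+g\ge k_0+g\ge 2g-d^*_S$.

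\textbf{A smaller issue in $(\supset)$.} The check you announce, that $g-m_2\ge 1$ for $m_2:=m-k_0$ throughout the range, cannot succeed. The case $m_2=g$ is harmless: glue a $g$-cycle at a vertex. But $m_2>g$ does occur, and then no single attachment exists:
\begin{itemize}
\item when $k_0=1$ and $m\ge g+2$;
\item when $k_0=g-d^*_S$ and $m>2g-d^*_S$.
\end{itemize}
The paper's own proof uses the same single attachment and is silent on this.

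Your $k_0=1$ sketch does not repair it, since a remainder $m-g<\lceil g/2\rceil$ cannot be attached on a $g$-cycle alone. A clean fix when $m-k_0>g$ is a two-step construction:
\begin{enumerate}
\item Attach $a:=m-\lceil g/2\rceil\in\{\lfloor g/2\rfloor+1,\ldots,g-1\}$ edges between two vertices at distance $g-a$ in a diameter-maximal realization of $S$. This is possible because in this regime $g-a\le d^*_S-\lfloor g/2\rfloor-1$.
\item Attach $\lceil g/2\rceil$ edges between two vertices at distance $\lfloor g/2\rfloor$ on the newly created realizing $g$-cycle, using Propositions~\ref{prop:realizing_shortest}(3) and~\ref{prop:add_path}.
\end{enumerate}
Finally, invoking Proposition~\ref{prop:max-linear} and Lemma~\ref{lem:diam-monotone} to describe $G^+$ is superfluous. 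Any $G^+\in\sigma^{-1}(S^+)$ of diameter $d^*_{S^+}$ will do.
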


\begin{proof}
We first verify that the stated interval is non-empty, i.e.\ that
$\max(g+1,\, 2g - (d^*_S + d^*_{S^+})) \leq \lceil 3g/2 \rceil - 1$.
On the one hand, $g + 1 \leq \lceil 3g/2 \rceil - 1$ for all $g \geq 3$.
On the other hand, since $S$ is non-empty, $d^*_S \geq 1$, and since $S^+$
contains a $g$-cycle, $d^*_{S^+} \geq \lfloor g/2 \rfloor$. Therefore
\[
  2g - (d^*_S + d^*_{S^+}) \;\leq\; 2g - 1 - \lfloor g/2 \rfloor
  \;=\; \lceil 3g/2 \rceil - 1.
\]

We proceed by double inclusion. The inclusion $(\supset)$ follows by explicit
path attachment to a graph in $\sigma^{-1}(S^+)$. For $(\subset)$, we use
Proposition~\ref{prop:two-cycles} to reduce to the removal of a path or cycle
in $E_{\max}$, and derive a contradiction via Lemma~\ref{lem:diam-monotone}.

$(\supset)$ Let $m \in \{\max(g+1,\,2g - (d^*_S + d^*_{S^+})),\ldots,
\lceil 3g/2 \rceil - 1\}$ and let $G \in \sigma^{-1}(S^+)$ attain diameter
$d^*_{S^+}$. Set $m' := m - \max(1, g - d^*_S)$ and choose $v, v' \in V(G)$
with $d_G(v,v') = g - m'$, which is possible since $g - m' \leq d^*_{S^+}$. Since $m' \geq
\max(\sigma(G)) - d_G(v,v')$, Proposition~\ref{prop:add_path} yields
\[
  \sigma\!\left(\Gamma^{m'}_{v,v'}(G)\right)
  = S^+ \uplus (g^{(m')})
  = S \uplus (g^{(\max(1,\,g-d^*_S))})
  = S \uplus (g^{(m)}),
\]
hence $m \in \mathcal{D}_{S,>}^{(g)}$.

$(\subset)$ If $2g - (d^*_S + d^*_{S^+}) \leq g+1$, then the
inclusion is immediate. Otherwise, suppose for contradiction that
$m \in \mathcal{D}_{S,>}^{(g)}$ for some $m \in \{g+1, \ldots,
2g - (d^*_S + d^*_{S^+})-1\}$.

By Proposition~\ref{prop:two-cycles}, there exists $G \in \sigma^{-1}(S
\uplus (g^{(m)}))$ and two realizing $g$-cycles $C_1$, $C_2$ such that
$E_{\max} \subseteq E(C_1) \cup E(C_2)$, $|E_{\max} \cap E(C_1)| \leq g-1$, and
$F := E_{\max} \cap E(C_2) \setminus E(C_1)$ is a path or a cycle, where
$E_{\max} = \{e \in E(G) \mid g_e = g\}$.

Remove $F$ from $G$ and reconnect any resulting components by successive vertex
identifications (Definition~\ref{def:vertex-id}). By
Proposition~\ref{prop:remove-girth}, the resulting graph $G'$ satisfies
$G' \in \sigma^{-1}(S \uplus (g^{(k)}))$ with $k := m - |F|$. By
Proposition~\ref{prop:d_low_diam}, $\max(1, g - d^*_S) \leq k \leq g-1$.

If $F$ is a cycle, then $|F| = g$, so
\[
  m \;=\; k + g \;\geq\; (g - d^*_S) + g \;\geq\; (g - d^*_S) + (g - d^*_{S^+})
  \;=\; 2g - (d^*_S + d^*_{S^+}),
\]
contradicting $m < 2g - (d^*_S + d^*_{S^+})$.

Else, $F$ is a path, and by Proposition~\ref{prop:add_path}, the path attachment producing $F$ satisfies
\[
  m - k = |F| \geq g - d^*_{S \uplus (g^{(k)})}.
\]
By Lemma~\ref{lem:diam-monotone},
\[
  d^*_{S \uplus (g^{(k)})} \leq d^*_{S^+} + k - \max(1, g - d^*_S).
\]
Combining these two inequalities,
\[
  m - k \geq g - d^*_{S^+} - k + \max(1, g - d^*_S),
\]
hence
\[
  m \geq g - d^*_{S^+} + \max(1, g - d^*_S) \geq 2g - (d^*_S + d^*_{S^+}),
\]
contradicting $m < 2g - (d^*_S + d^*_{S^+})$.
\end{proof}

\section{Main Result}
\label{sec:main}

\begin{theorem}[Characterization of realizable edge-girth sequences]
\label{thm:main}
Let $S \in \mathrm{Seq}(\mathcal{C})$ be a non-empty and finite sequence of admissible edge-girth values. 

\begin{enumerate}
  \item[\textup{(i)}] If $S = (g^{(m)})$ is a constant sequence, then $S$ is
    realizable if and only if
    \[
      g = \infty \quad \text{or} \quad m = g \quad \text{or} \quad m \geq \lceil 3g/2 \rceil.
    \]
  \item[\textup{(ii)}] Otherwise, write $S = S_0 \uplus (g^{(m)})$ where
    $g = \max(S)$ and $m \geq 1$ is the multiplicity of $g$ in $S$. Set
    \[
      \underline{m} \;:=\; \max(1,\, g - d^*_{S_0}), \quad
      S_0^+ \;:=\; S_0 \uplus (g^{(\underline{m})}), \quad
      \overline{m} \;:=\; \max\bigl(g+1,\,2g-(d^*_{S_0}+d^*_{S_0^+})\bigr),
    \]
where $d^*_T$ denotes the maximum diameter over all graphs in $\sigma^{-1}(T)$,
as in Definition~\ref{def:diam}.

    Then $S$ is realizable if and only if $S_0$ is realizable and one of
    the following holds:
        \[
      g = \infty \quad \text{or} \quad \underline{m} \leq m \leq g  \quad \text{or} \quad m \geq \overline{m}.
    \]
\end{enumerate}
\end{theorem}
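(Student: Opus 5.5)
The theorem is essentially an assembly of results already established, so the plan is to split according to whether the maximum value $g$ is infinite and whether $S$ is constant, and to invoke the earlier propositions in each case.

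\emph{Part (i).} If $g=\infty$, the path $P_{m+1}$ realizes $(\infty^{(m)})$, as noted in the remark preceding Proposition~\ref{prop:decomposition}. If $g<\infty$, the statement is exactly Proposition~\ref{prop:constant} restricted to $m\ge 1$, since $S$ is non-empty.

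\emph{Part (ii), case $g=\infty$.} Here $S_0$ consists of all finite values of $S$, so $S_0=S^*$ in the notation of Proposition~\ref{prop:decomposition}. That proposition gives that $S$ is realizable iff $S_0\in\mathcal S^*$, for every $m$. This matches the disjunct ``$g=\infty$''.

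\emph{Part (ii), case $g<\infty$.} Now $S_0$ is non-empty with $\max(S_0)<g$, and every value is finite. Proposition~\ref{prop:induction} gives $S\in\mathcal S^*$ iff $S_0\in\mathcal S^*$ and $m\in\mathcal D^{(g)}_{S_0}$. So it remains to show that, for realizable $S_0$,
\[
\mathcal D^{(g)}_{S_0}=\{\underline m,\dots,g\}\cup\{m\ge \overline m\}.
\]
I will partition $\mathbb N^*$ into $\{1,\dots,g-1\}$, $\{g\}$, $\{g+1,\dots,\lceil 3g/2\rceil-1\}$ and $\{m\ge\lceil 3g/2\rceil\}$, and treat each block:
\begin{itemize}
\item By Remark~\ref{rem:D-known}, $g$ and every $m\ge\lceil 3g/2\rceil$ belong to $\mathcal D^{(g)}_{S_0}$.
\item Proposition~\ref{prop:d_low_diam} identifies the first block's contribution as $\{\underline m,\dots,g-1\}$.
\item Proposition~\ref{prop:d_high_diam}, with $S^+=S_0^+$, identifies the third block's contribution as $\{\overline m,\dots,\lceil 3g/2\rceil-1\}$.
\end{itemize}
Taking the union gives the displayed set. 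This uses $\overline m\le\lceil 3g/2\rceil-1$, verified at the start of the proof of Proposition~\ref{prop:d_high_diam}, so the tail $\{m\ge\overline m\}$ is exactly the third block's piece together with the fourth block.

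The main step is ensuring the definitions line up. The quantities $d^*_{S_0}$ and $d^*_{S_0^+}$ must be well defined, so $S_0^+$ has to be realizable. This follows from Proposition~\ref{prop:d_low_diam}, since $\underline m\in\{\max(1,g-d^*_{S_0}),\dots,g-1\}$, or $\underline m=g$ in degenerate cases, which is also covered by Remark~\ref{rem:D-known}.

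Otherwise the proof is a direct bookkeeping combination of Propositions~\ref{prop:constant}, \ref{prop:decomposition}, \ref{prop:induction}, \ref{prop:d_low_diam} and~\ref{prop:d_high_diam}.
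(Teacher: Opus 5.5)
Your proposal is correct and follows the paper's proof essentially step for step. Part (i) comes from Proposition~\ref{prop:constant}, plus the path-graph remark for $g=\infty$, and part (ii) goes through Proposition~\ref{prop:decomposition} when $g=\infty$ and otherwise through Proposition~\ref{prop:induction}, with $\mathcal{D}^{(g)}_{S_0}$ assembled from Remark~\ref{rem:D-known} and Propositions~\ref{prop:d_low_diam} and~\ref{prop:d_high_diam}; the extra bookkeeping you add (the constant $g=\infty$ case, $\overline{m}\le\lceil 3g/2\rceil-1$ so the two tails merge, and realizability of $S_0^+$ so that $d^*_{S_0^+}$ is defined) is left implicit in the paper and you handle it correctly.
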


\begin{proof}
Part~\textup{(i)} is Proposition~\ref{prop:constant}.

For Part~\textup{(ii)}, write $S = S_0 \uplus (g^{(m)})$ with $g = \max(S)$.
If $g = \infty$, Proposition~\ref{prop:decomposition} reduces realizability
of $S$ to that of $S_0 \in \mathcal{S}^*$ via the finite--infinite
decomposition. Otherwise, by Proposition~\ref{prop:induction}, $S \in
\mathcal{S}^*$ if and only if $S_0 \in \mathcal{S}^*$ and $m \in
\mathcal{D}_{S_0}^{(g)}$. Propositions~\ref{prop:d_low_diam}
and~\ref{prop:d_high_diam}, together with Remark~\ref{rem:D-known}, give
the complete characterization
\[
  \mathcal{D}_{S_0}^{(g)} \;=\; \{\underline{m}, \ldots, g\} \cup
  \{\overline{m}, \overline{m}+1, \ldots\},
\]
which yields the stated condition on $m$.
\end{proof}


\section{Maximum Diameter Under Edge-Girth Sequence Constraints}
\label{sec:diam}

We now determine $d^*_S$ for any realizable sequence $S \in \mathcal{S}^*$.
This quantity plays a central role in Theorem~\ref{thm:main}, as the admissible range for the multiplicity $m$ of
$\max(S)$ is entirely determined by $d^*_{S_0}$ and $d^*_{S_0^+}$, where
$S_0$ is the prefix sequence. 
We proceed in three steps of increasing
generality. Section~\ref{sec:diam_constant} treats constant sequences
$(g^{(m)})$, for which we obtain a closed-form formula when $g$ is even and
a recursive formula when $g$ is odd. Section~\ref{sec:diam_independent}
extends the analysis to sequences where each constant
subsequence $(g_i^{(m_i)})$ is itself realizable. Section~\ref{sec:diam_general} treats the general case.

\subsection{Maximum Diameter of Graphs with Constant Edge-Girth Sequence}
\label{sec:diam_constant}

In Proposition~\ref{prop:constant}, we characterized realizable constant
sequences: $(g^{(m)}) \in \mathcal{S}^*$ if and only if
\[
  m \in \mathcal{M}_g := \{0, g\} \cup \{m \geq \lceil 3g/2 \rceil\}.
\]
The proof for $m \geq \lceil 3g/2 \rceil$ proceeds by decomposing $m$ into a
sum of path lengths, each corresponding to an attachment preserving constant
edge-girth $g$. This decomposition structure underlies the following
section. Figure~\ref{fig_elementary-decomp} illustrates
the main notions introduced below: $g$-elementary structures, their path
decomposition, and the associated decomposition vector.

\begin{definition}[$g$-elementary structure]
\label{def:g-elementary}
Let $g \in \mathcal{C}^*$. A graph $G \in \mathcal{G}$ is a
\emph{$g$-elementary structure} if there exist an integer $k \geq 1$ and an edge partition $E(G) = \bigsqcup_{i=1}^{k} P_i$ such that:
\begin{enumerate}
  \item[\textup{(1)}] $P_1$ is a $g$-cycle in $G$;
  \item[\textup{(2)}] for every $2 \leq i \leq k$, $P_i$ is a path of $|P_i| \leq g - 1$ edges in $G$ ;
  \item[\textup{(3)}] for every $1 \leq i \leq k$, setting
    $P_{\leq i} := \bigsqcup_{j=1}^{i} P_j$,
    \[
      \sigma\!\left(G[P_{\leq i}]\right) = (g^{(|P_{\leq i}|)}),
    \]
    where $G[P_{\leq i}]$ denotes the subgraph of $G$ induced by $P_{\leq i}$.
\end{enumerate}
Such a partition is called a \emph{path decomposition} of $G$.
\end{definition}

\begin{proposition}
\label{prop:decomposition_elementary}
Let $g \in \mathcal{C}^*$, $m \in \mathcal{M}_g$ with $m > 0$, and
$G \in \sigma^{-1}((g^{(m)}))$. Then there exists a partition
$E(G) = \bigsqcup_{i=1}^{k} E_i$ such that:
\begin{enumerate}
  \item[\textup{(1)}] for each $1 \leq i \leq k$, the subgraph $G[E_i]$ is a
    $g$-elementary structure;
  \item[\textup{(2)}] for each $1 \leq i \leq k-1$, the subgraphs $G[E_i]$
    and $G[E_{i+1}]$ share at least one vertex.
\end{enumerate}
\end{proposition}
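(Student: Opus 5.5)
The plan is a greedy ear-type decomposition of $E(G)$. We peel off $g$-cycles and attach short paths to them, checking at each stage that every edge added so far still has edge-girth $g$ in the subgraph built so far.

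Throughout, write $m=|E(G)|$. Since every edge of $G$ has edge-girth $g$, every edge lies on a realizing $g$-cycle of $G$. Every cycle has length at least $g$, so any subgraph $H\subseteq G$ has edge-girth at least $g$ on each of its edges. An edge $e$ of $H$ therefore has edge-girth exactly $g$ in $H$ as soon as $H$ contains a $g$-cycle through $e$.

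We build the $E_i$ and, inside each, the path decomposition $P_1,\dots,P_k$ by induction. Maintain a set $U\subseteq E(G)$ of already-used edges. Along the way, keep the invariant that every edge of $U$ lies on a $g$-cycle contained in $G[U]$.

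\textbf{Start.} Pick any realizing $g$-cycle $C$ and set $P_1=E(C)$, $U=E(C)$.

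\textbf{Extension step.} Choose an unused edge $e$ adjacent to $G[U]$; one exists since $G$ is connected. Take a realizing $g$-cycle $C_e$ for $e$. Using Lemma~\ref{prop:overlap_path}, applied to $C_e$ and a cycle of $G[U]$, I would try to arrange that $E(C_e)\setminus U$ is a single subpath $Q$ containing $e$ whose two endpoints lie in $V(G[U])$.

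\textbf{Case (a): $C_e\cap U\neq\varnothing$.} Then $|Q|\le g-1$, and we append $Q$ as a new $P_i$. Every edge of $Q$ lies on $C_e\subseteq G[U\cup Q]$, so condition~(3) of Definition~\ref{def:g-elementary} holds for the extended structure.

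\textbf{Case (b): $C_e$ is edge-disjoint from $U$.} Then $C_e$ meets $G[U]$ only at vertices. We close the current elementary structure $E_i:=U_i$ and start a new one, $E_{i+1}$, with $P_1=E(C_e)$. Because $e$ was chosen adjacent to $G[U]$, the structures $G[E_i]$ and $G[E_{i+1}]$ share a vertex, which gives property~(2).

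Iterating until $U=E(G)$ produces the partition. Property~(1) holds because each block is built exactly as a $g$-cycle followed by paths of length at most $g-1$, each keeping all edges on $g$-cycles inside the block.

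\textbf{Main obstacle.} The hard step is guaranteeing that in case~(a) the new part $E(C_e)\setminus U$ is a single path, rather than several arcs separated by stretches of $U$. My first attempt would follow the rerouting idea of Lemma~\ref{prop:overlap_path}. Suppose two consecutive arcs of $C_e$ outside $U$ are separated by a stretch of $C_e$ whose vertices lie in $G[U]$. A stretch of $C_e$ between two vertices of $C_e$ is a shortest path in $G$ by Proposition~\ref{prop:realizing_shortest}(2). We can therefore replace that stretch, or an outside arc not containing $e$, by a path of equal length. Among realizing $g$-cycles for $e$, I would choose one maximizing $|E(C_e)\cap U|$, and argue that any outside arc not containing $e$ could be swapped for a path in $U$ of the same length, contradicting maximality.

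If this fails, there is a fallback. Add only the arc $Q$ of $C_e\setminus U$ containing $e$, and note that the remaining outside arcs will be added in later iterations. This still works, provided every edge of $Q$ lies on a $g$-cycle of $G[U\cup Q]$. Establishing that would require the same shortest-path replacement argument, now applied to the segments of $C_e$ lying outside $U\cup Q$. I expect this replacement argument to be the delicate part of the proof.

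A secondary check is that blocks in case~(b) are only declared adjacent when they share a vertex. If the edge $e$ adjacent to $U$ touches an earlier block but not the most recent one, the blocks must be reordered, for instance by a DFS order of the block tree. Otherwise property~(2), which requires consecutive blocks to share a vertex, could fail.
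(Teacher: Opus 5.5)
\textbf{There is a genuine gap, and it cannot be closed: the proposition itself is false.} The step you single out as the main obstacle, getting $E(C_e)\setminus U$ to be a single path, is not merely delicate. No choice of $C_e$, of ordering, or of block partition can make it work.

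\textbf{Counterexample.} Take $g=4$ and the pentagonal prism $G=C_5\,\square\,K_2$, with vertices $t_i,b_i$ ($i\in\mathbb{Z}_5$) and edges $t_it_{i+1}$, $b_ib_{i+1}$, $t_ib_i$.
\begin{enumerate}
\item[(i)] $G$ is triangle-free, and its only $4$-cycles are the squares $Q_i=(t_i,t_{i+1},b_{i+1},b_i)$. Every edge lies on one of them, so $\sigma(G)=(4^{(15)})$ with $15\in\mathcal{M}_4$.
\item[(ii)] Condition~(3) forces every block $E_j$, and every prefix $P_{\le i}$, to be a union of squares.
\item[(iii)] The edge $t_it_{i+1}$ lies only on $Q_i$, and $Q_i,Q_{i+1}$ share the rung $t_{i+1}b_{i+1}$. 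Hence all five squares fall in one block, and $G$ itself would have to be $4$-elementary.
\item[(iv)] Just before a prefix first equals $E(G)$, it is a union of at most four squares. Its complement is then either $\{t_jt_{j+1},b_jb_{j+1}\}$, which is two disjoint edges and not a path, or has at least $4>g-1$ edges.
\end{enumerate}
In terms of your procedure: whatever the order, once four squares are absorbed, the only remaining $4$-cycle meets the complement of $U$ in two separate arcs.

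\textbf{Why your two repairs fail here.} Take $U=Q_0\cup\dots\cup Q_3$ and $e=t_4t_0$.
\begin{enumerate}
\item[(a)] The swap argument needs a path in $G[U]$ of length $|A|$ between the endpoints $x,y$ of an outside arc $A$. Knowing that $A$ is a shortest path in $G$ gives no such path in $G[U]$. Worse, if the interior of $A$ avoids $V(G[U])$, the girth gives $d_{G[U]}(x,y)\ge g-|A|$, which exceeds $|A|$ whenever $|A|<g/2$. Here $d_{G[U]}(b_4,b_0)=4$ while $|A|=1$.
\item[(b)] The fallback of adding only the arc through $e$ fails because $t_4t_0$ lies on no $4$-cycle of $G[U\cup\{t_4t_0\}]$; its edge-girth there is $5$.
\end{enumerate}

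\textbf{Comparison with the paper.} The paper's proof runs the same closure procedure as yours. It simply asserts that each extension adds a single path of length at most $g-1$, which is exactly this unjustified step. The same graph also breaks Proposition~\ref{prop:constr-partition}. The prism is $2$-connected with no degree-$2$ vertices, so a final construction step would have to attach a single edge to $G$ minus that edge. But $G$ minus any edge no longer has constant edge-girth $4$.

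A corrected statement needs a weaker notion of elementary structure. For example, a step could be allowed to add $E(C)\setminus U$ for any $g$-cycle $C$ sharing an edge with $U$, even when this consists of several arcs.

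\textbf{A secondary issue.} Your case~(b) closes a block as soon as one chosen $C_e$ is edge-disjoint from $U$. You must first exhaust all $g$-cycles sharing an edge with the current block, as the paper's closure does. Otherwise a later case-(a) path may be appended to a block that does not contain the rest of its $g$-cycle.
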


\begin{proof}
Choose any edge $e \in E(G)$. Since $g_e = g$, the edge $e$ belongs to a realizing $g$-cycle $C$. 
Initialize $E_1 := E(C)$.

We then extend $E_1$ iteratively: as long as there exists a $g$-cycle sharing
at least one edge with $G[E_1]$ but not entirely contained in $E_1$, add its
remaining edges to $E_1$. 
Each such extension adds a path of length at most $g - 1$ to $G[E_1]$, and by Proposition~\ref{prop:add_path}, preserves
$\sigma(G[E_1]) = (g^{(|E_1|)})$. The process terminates since $|E(G)|$ is
finite. By construction, $G[E_1]$ is a $g$-elementary structure.

If $E_1 = E(G)$, the construction is complete with $k = 1$. Otherwise, since
$G$ is connected, there exists an edge $e' \notin E_1$ incident to a vertex of
$G[E_1]$. Apply the same procedure starting from $e'$ to construct $E_2$, and
so on. By connectivity of $G$, consecutive subgraphs $G[E_i]$ and $G[E_{i+1}]$
can be ordered to share at least one vertex, yielding the desired partition.
\end{proof}

\begin{definition}[$g$-elementary construction operations]
\label{def:elementary_operations}
Let $g \in \mathcal{C}^*$. We define two \emph{$g$-elementary construction
operations}:
\begin{enumerate}
  \item[\textup{(1)}] \emph{Path attachment}: attach a path of length
    $p \in \{1, \ldots, g-1\}$ to an existing graph by
    identifying its endpoints with two vertices at distance $g - p$ in the
    graph (Proposition~\ref{prop:add_path}).
  \item[\textup{(2)}] \emph{Vertex identification}: connect two existing
    graphs by identifying a single vertex from each
    (Definition~\ref{def:vertex-id}).
\end{enumerate}
\end{definition}

\begin{definition}[Decomposition vector]
\label{def:decomposition_vector}
Let $g \in \mathcal{C}^*$ and $m \in \mathcal{M}_g$ with $m > 0$. A vector
of non-negative integers $c = (c_p)_{1 \leq p \leq g}$ is called an
\emph{$(g^{(m)})$-decomposition vector} if
\[
  c_g \geq 1 \qquad \text{and} \qquad \sum_{p=1}^{g} p\, c_p = m,
\]
where $c_g$ is the number of $g$-cycles and, for $p < g$, $c_p$ is the
number of paths of length $p$ attached to the structure. Given such a vector
$c$, we denote by $\mathcal{R}(c)$ the set of graphs obtainable from $c_g$
disjoint $g$-cycles and $c_p$ paths of length $p$ for each $p \in \{1,
\ldots, g-1\}$, assembled using the $g$-elementary construction operations
of Definition~\ref{def:elementary_operations}.
\end{definition}

\begin{remark}
\label{rem:constr-empty}
The set $\mathcal{R}(c)$ may be empty for certain decomposition vectors.
For instance, if $c_g = 1$, $c_p = 0$ for all $p \geq \lceil g/2 \rceil$, and
$c_p > 0$ for some $p < \lceil g/2 \rceil$, then no path of length $p$ can be
attached to a $g$-cycle while preserving constant edge-girth $g$. Conversely, if $c_p = 0$ for all
$p < \lceil g/2 \rceil$, then the construction is always feasible (Proposition~\ref{prop:add_path}).
\end{remark}

\begin{figure}[htbp]
\centering
\begin{tikzpicture}[
  every node/.style={circle, draw, fill=white, inner sep=2pt, minimum size=12pt},
  lbl/.style={draw=none, fill=none},
]
\begin{scope}[xshift=0cm]
  \node (v1)  at (0, 0) {};
  \node (v2)  at (-1,1) {};
  \node (v3)  at (0, 2) {};
  \node (v4)  at (2, 2) {};
  \node (v5)  at (2,0) {};
  \node (v6)  at (3,3) {};
  \node (v7)  at (4,2) {};
  \node (v8)  at (4,0) {};
  \node (v9)  at (6,3) {};
  \node[fill=blue!20] (v10)  at (6,0) {$v$};
  \node (v11)  at (7,1) {};
  \node (v12)  at (9,1) {};
  \node (v13)  at (9,-1) {};
  \node (v14)  at (7,-1) {};


  \draw[thick, dashed, blue] (v1)  --node[lbl, below]{$e_2$} (v2);
  \draw[thick, dashed, blue] (v1)  --node[lbl, below]{$e_1$} (v5);
  \draw[thick, dashed, blue] (v2)  --node[lbl, above]{$e_3$} (v3);
  \draw[thick, dashed, blue] (v3)  --node[lbl, above]{$e_4$} (v4);
  \draw[thick, red] (v4)  --node[lbl, left]{$e_5$} (v5);
  \draw[thick, red] (v4)  --node[lbl, above]{$e_9$} (v6);
  \draw[thick, red] (v5)  --node[lbl, below]{$e_6$} (v8);
  \draw[thick, red] (v6)  --node[lbl, below]{$e_8$} (v7);
  \draw[thick, dashed, blue] (v6)  --node[lbl, above]{$e_{10}$} (v9);
  \draw[thick, red] (v7)  --node[lbl, right]{$e_7$} (v8);
  \draw[thick, dashed, blue] (v8)  --node[lbl, below]{$e_{12}$} (v10);
  \draw[thick, dashed, blue] (v9)  --node[lbl, left]{$e_{11}$} (v10);
  \draw[thick, dotted] (v10)  --node[lbl, above]{$e_{13}$} (v11);
  \draw[thick, dotted] (v10)  --node[lbl, below]{$e_{17}$} (v14);
  \draw[thick, dotted] (v11)  --node[lbl, above]{$e_{14}$} (v12);
  \draw[thick, dotted] (v12)  --node[lbl, right]{$e_{15}$} (v13);
  \draw[thick, dotted] (v13)  --node[lbl, below]{$e_{16}$} (v14);

  \draw[decorate, decoration={brace, amplitude=8pt, mirror}, thick, blue, dashed]
    (-1.3, -1) -- (2, -1)
    node[lbl, midway, below=10pt, blue] {$P_2$};

  \draw[decorate, decoration={brace, amplitude=8pt, mirror}, thick, red]
    (2, -1) -- (4, -1)
    node[lbl, midway, below=10pt, red] {$P_1$};

  \draw[decorate, decoration={brace, amplitude=8pt, mirror}, thick, blue, dashed]
    (4, -1) -- (6, -1)
    node[lbl, midway, below=10pt, blue] {$P_3$};

  \draw[decorate, decoration={brace, amplitude=8pt, mirror}, thick, black]
    (-1.3, -2) -- (6, -2)
    node[lbl, midway, below=10pt, black] {$E_1$};

  \draw[decorate, decoration={brace, amplitude=8pt, mirror}, thick, black]
    (6, -2) -- (10, -2)
    node[lbl, midway, below=10pt, black] {$E_2$};

\end{scope}

\end{tikzpicture}
\caption{A graph $G$ with $\sigma(G) = (5^{(17)})$, decomposed into two
$5$-elementary structures $E_1$ and $E_2$ joined by vertex identification at
$v$. The structure $E_1$ decomposes as $E_1 = P_1 \sqcup P_2 \sqcup P_3$,
where $P_1$ (solid) is the initial $5$-cycle and $P_2$, $P_3$ (dashed) are paths attached
successively to $P_1$. Since $G$ decomposes into two full $5$-cycles, one
path of length $3$, and one path of length $4$, the decomposition vector
$c_3 = 1$, $c_4 = 1$, $c_5 = 2$ (and $c_p = 0$ otherwise) allows one to
reconstruct $G$.%
\label{fig_elementary-decomp}}
\end{figure}

\begin{proposition}
\label{prop:constr-partition}
Let $g \in \mathcal{C}^*$, $m \in \mathcal{M}_g$ with $m > 0$, and let
$\Psi((g^{(m)}))$ denote the set of all $(g^{(m)})$-decomposition vectors.
Then
\[
  \sigma^{-1}\!\left((g^{(m)})\right) =
  \bigcup_{c \,\in\, \Psi((g^{(m)}))} \mathcal{R}(c).
\]
\end{proposition}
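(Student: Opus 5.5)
We prove the two inclusions separately. For $(\supseteq)$, fix a decomposition vector $c \in \Psi((g^{(m)}))$ and a graph $H \in \mathcal{R}(c)$. By definition $H$ is assembled from $c_g$ disjoint $g$-cycles and the attached paths via the two $g$-elementary construction operations. We argue by induction on the number of operations performed that every intermediate graph $H_t$ satisfies $\sigma(H_t) = (g^{(|E(H_t)|)})$.

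The two operations are handled by earlier results:
\begin{itemize}
\item A $g$-cycle has sequence $(g^{(g)})$.
\item A vertex identification gives $\sigma(H_1 \oplus H_2) = \sigma(H_1) \uplus \sigma(H_2)$ by Proposition~\ref{prop:concat}.
\item A path attachment of length $p$ between vertices at distance $g-p$ satisfies the hypothesis $p \geq g - d(v,v')$ of Proposition~\ref{prop:add_path}, since $\max(\sigma(H_t)) = g$. So it appends $(g^{(p)})$.
\end{itemize}
Since $\sum_p p\,c_p = m$, the final graph realizes $(g^{(m)})$. If $\mathcal{R}(c)$ is empty (Remark~\ref{rem:constr-empty}), there is nothing to check.

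For $(\subseteq)$, take $G \in \sigma^{-1}((g^{(m)}))$ and apply Proposition~\ref{prop:decomposition_elementary}. This gives a partition $E(G) = \bigsqcup_{i=1}^k E_i$ into $g$-elementary structures, ordered so that consecutive pieces share a vertex. Each $G[E_i]$ has a path decomposition $P^{(i)}_1 \sqcup \dots \sqcup P^{(i)}_{k_i}$ with the following properties:
\begin{itemize}
\item $P^{(i)}_1$ is a $g$-cycle.
\item Each later $P^{(i)}_j$ is a path of length at most $g-1$.
\item Each prefix $G[P^{(i)}_{\le j}]$ has constant edge-girth $g$.
\end{itemize}

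Now we read off the vector $c$. Set $c_g := k$ and $c_p := \#\{(i,j) : j\ge 2,\ |P^{(i)}_j| = p\}$. Then $c_g \ge 1$ and $\sum_p p\,c_p = |E(G)| = m$, so $c \in \Psi((g^{(m)}))$.

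To show $G \in \mathcal{R}(c)$, we rebuild $G$ step by step. Within each structure, adding $P^{(i)}_j$ to $G[P^{(i)}_{\le j-1}]$ must be a legal path attachment: its endpoints $v,v'$ must be at distance exactly $g - |P^{(i)}_j|$ in the prefix graph. This follows from the constant edge-girth property of $G[P^{(i)}_{\le j}]$. The new edges have edge-girth $g$, so some shortest cycle through them consists of $P^{(i)}_j$ plus a shortest $v$–$v'$ path in the prefix. Hence $|P^{(i)}_j| + d(v,v') = g$.

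This step needs the interior vertices of $P^{(i)}_j$ to be new. If the extension in Proposition~\ref{prop:decomposition_elementary} added a cycle's remaining edges that touch the existing structure at interior points, those edges do not form a single attached path. They then split into several subpaths, each attached between existing vertices. Each subpath still lies on a realizing $g$-cycle whose other edges are already present, so the same distance identity applies to it. We refine the decomposition accordingly, which changes $c$ but keeps it a valid decomposition vector.

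Finally, the pieces $G[E_i]$ are glued. Consecutive pieces share a vertex, and we need the gluing to be a vertex identification, i.e.\ pieces meet in exactly one vertex with no extra cycles. This is the main obstacle: two elementary structures might share more than one vertex. In that case a cycle would pass through both, and its edges of edge-girth $g$ would link the pieces via some realizing $g$-cycle. That contradicts the maximality of the greedy extension in Proposition~\ref{prop:decomposition_elementary}.

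I would first try to make this rigorous by showing that any further shared vertex yields a shortest cycle through edges of both pieces. If this fails, the fallback is to treat extra attachments between pieces as additional path attachments, again refining $c$.
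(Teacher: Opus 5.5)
\textbf{There is a genuine gap, and it cannot be closed: the $(\subseteq)$ inclusion is false as stated.}

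Your $(\supseteq)$ direction is correct and matches the paper's argument. The failure is at your refinement step. Suppose the not-yet-absorbed edges of a realizing $g$-cycle $C$ split into several arcs. Adding one arc $A$ does \emph{not} leave ``all other edges of $C$ already present'', because the remaining arcs of $C$ are still missing. So nothing forces the endpoints of $A$ to be at distance $g-|A|$ in the current graph, and the intermediate graph can acquire edges of edge-girth larger than $g$.

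Concretely, take the pentagonal prism $G=C_5\square K_2$, with rims $o_1\cdots o_5$ and $i_1\cdots i_5$ and spokes $o_ji_j$. It is triangle-free. Its only $4$-cycles are the five squares $(o_j,o_{j+1},i_{j+1},i_j)$, and each rim edge lies in exactly one square. So $\sigma(G)=(4^{(15)})$ with $15\in\mathcal{M}_4$.

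Greedy growth from one square absorbs three more squares as legal length-$3$ attachments. Whatever the order, the fifth square then contributes two disjoint rim edges, e.g.\ $o_5o_1$ and $i_5i_1$. Adding $o_5o_1$ alone joins vertices at distance $4$ and gives it edge-girth $5$.

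No other ordering can help:
\begin{enumerate}
\item Every graph produced by the $g$-elementary operations has constant edge-girth $g$ (Propositions~\ref{prop:add_path} and~\ref{prop:concat}).
\item $G$ has no cut vertex, so its last operation must be a path attachment.
\item $G$ is cubic, so that path has no interior vertices; it is a single edge $e$.
\item $G-e$ never has constant edge-girth $4$. Deleting a rim edge leaves the opposite rim edge of its square on no $4$-cycle. Deleting a spoke $o_ji_j$ does the same to $o_jo_{j+1}$.
\end{enumerate}
Hence $G\notin\mathcal{R}(c)$ for every $c\in\Psi((4^{(15)}))$, although $G\in\sigma^{-1}((4^{(15)}))$.

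The paper's own two-line proof has the same defect. It relies on Proposition~\ref{prop:decomposition_elementary}, whose claim that each greedy extension ``adds a path of length at most $g-1$'' is exactly what fails on this graph.

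Your gluing step is also unsupported. A cycle through two greedy pieces need not have length $g$, so maximality gives no contradiction. For example, identify the vertices $u,w$ of a $4$-cycle $(u,x,w,y)$ with the two ends $a_1,a_4$ of one rail of the ladder $P_4\square K_2$. The two greedy pieces then share two vertices, yet every cycle meeting both has length at least $2+3=5$. One checks that this graph, too, has constant edge-girth $4$ and lies in no $\mathcal{R}(c)$.

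So you located the right weak points, but neither repair works. The statement itself has to change, not only its proof. One option is to enlarge the operations so that a $g$-cycle may be closed by several arcs attached simultaneously; another is to restrict the class of graphs considered.
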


\begin{proof}
$(\supseteq)$ Let $c \in \Psi((g^{(m)}))$ and $G \in \mathcal{R}(c)$.
Path attachments of length $p \in \{1, \ldots, g-1\}$ preserve
the constant edge-girth sequence by Proposition~\ref{prop:add_path}, and vertex identifications do so by Proposition~\ref{prop:concat}. Since
$\sum_{p=1}^{g} p\, c_p = m$, we obtain $\sigma(G) = (g^{(m)})$, so
$G \in \sigma^{-1}((g^{(m)}))$.

$(\subseteq)$ Let $G \in \sigma^{-1}((g^{(m)}))$. By
Proposition~\ref{prop:decomposition_elementary}, $G$ admits a partition into
$g$-elementary structures $G[E_1], \ldots, G[E_k]$. Decomposing each
$G[E_i]$ according to its path decomposition (Definition~\ref{def:g-elementary})
yields a vector $c \in \Psi((g^{(m)}))$ such that $G \in \mathcal{R}(c)$.
\end{proof}

\begin{remark}
\label{rem:optimal-decomp}
To maximize the diameter over $\sigma^{-1}((g^{(m)}))$, it suffices to
consider decomposition vectors $c$ with $c_p = 0$ for all
$p < \lceil g/2 \rceil$. Indeed, attaching a path of length
$p < \lceil g/2 \rceil$ connects two vertices at distance $g - p >
\lfloor g/2 \rfloor$ by a shortcut of length $p < \lceil g/2 \rceil$,
thereby reducing the diameter. Conversely, attaching a path of length
$p \geq \lceil g/2 \rceil$ increases the diameter. Since every
$m \in \mathcal{M}_g$ with $m > 0$ admits a decomposition into elements of
$\{\lceil g/2 \rceil, \ldots, g\}$ (as shown in the proof of
Proposition~\ref{prop:constant}), restricting to such vectors is always
possible.
\end{remark}

\begin{proposition}
\label{prop:longest_cycle_diam}
Let $g \in \mathcal{C}^*$ and let $p_1,\ldots,p_k \geq \lceil g/2 \rceil$.
Among all graphs obtained from a $g$-cycle by successively attaching paths
of lengths $p_1,\ldots,p_k$, the diameter is maximized by attaching each
path along the current longest cycle of the structure, and the resulting
diameter equals $\lfloor x/2 \rfloor$, where $x$ is the length of the
longest cycle.
\end{proposition}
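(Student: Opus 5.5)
We prove the statement by induction on $k$, tracking a cycle $C_{\max}$ of maximal length $x$ in the current structure $G_j$ (the $g$-cycle after the first $j$ attachments). The invariant we aim for is the formula itself: $\mathrm{diam}(G_j)=\lfloor x_j/2\rfloor$, where $x_j$ is the length of the longest cycle of $G_j$, and no other sequence of attachment choices yields a larger diameter. The base case $k=0$ is the bare $g$-cycle. Here Proposition~\ref{prop:realizing_shortest}(2) gives diameter exactly $\lfloor g/2\rfloor$.

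\textbf{Upper bound.} For a graph built this way we show $\mathrm{diam}(G_j)\le\lfloor x_j/2\rfloor$. Attaching a path of length $p\ge\lceil g/2\rceil$ between $v,v'$ with $d(v,v')=g-p\le\lfloor g/2\rfloor$ (Proposition~\ref{prop:add_path}) is an ear addition, so $G_j$ is $2$-connected. In a $2$-connected graph, any two vertices lie on a common cycle (Menger), hence their distance is at most half that cycle's length, and therefore at most $\lfloor x_j/2\rfloor$. This bound holds for every attachment order. It is also insensitive to how vertices are chosen, since only the longest cycle length matters.

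\textbf{Longest cycle and realizability of the bound.} Attaching $P$ of length $p$ between $v,v'$ creates new cycles of the form $P$ together with a $v$--$v'$ path in $G_{j-1}$. If $v,v'$ both lie on $C_{\max}$ at distance $g-p$ along it, we can choose them so that the new cycle $P\cup(\text{long arc of }C_{\max})$ has length $x_{j-1}-(g-p)+p=x_{j-1}+2p-g\ge x_{j-1}$. We then argue that this choice maximizes $x_j$ among all admissible choices of $v,v'$. Any new cycle uses $P$ plus a path between $v$ and $v'$, and the longest such path has length at most $x_{j-1}-d(v,v')$. The reason is that $v,v'$ at distance $g-p$ on some cycle give a cycle of length at most $x_{j-1}$, split into arcs of which the shorter has length at least $g-p$; this step needs care. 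For the lower bound on the diameter, we show that the new cycle $C'$ of length $x_j$ is isometric, i.e.\ distances along $C'$ equal graph distances. Then its two antipodal vertices realize $\lfloor x_j/2\rfloor$. Isometry follows because every chord-like shortcut in $G_j$ would come from earlier attachments of length at least $\lceil g/2\rceil$ whose endpoint distances are at most $\lfloor g/2\rfloor$, and those paths do not shorten distances along the long arc below its arc length. We would verify this using Proposition~\ref{prop:realizing_shortest}(2) applied inside each $g$-cycle through the attached path.

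The main obstacle is the isometry claim in the lower bound, together with the claim that attaching along the longest cycle maximizes $x_j$. A bad placement of $v,v'$ on $C_{\max}$ could create a shortcut that is shorter than half of $C'$ between far vertices. First I would try choosing $v,v'$ on $C_{\max}$ as the endpoints of a subpath of the original $g$-cycle arc lying on $C_{\max}$, so that $d(v,v')=g-p$ is realized along $C_{\max}$. I would then check by the triangle-type argument of Lemma~\ref{prop:overlap_path} that no path through $P$ beats the arc of $C'$. If this fails in general, I would fall back to a concrete chain construction in which each path is attached across the previous ear only, and verify the diameter directly.
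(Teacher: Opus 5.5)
Your upper bound is correct. Every graph in the family arises from a cycle by ear additions, so it is $2$-connected. Any two vertices therefore lie on a common cycle, which gives $\mathrm{diam}(G_j)\le\lfloor x_j/2\rfloor$, with $x_j$ the true circumference, for every attachment order. The gap is the other half, exactly where you flagged it, and it cannot be closed.

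\begin{itemize}
\item The isometry claim is false whenever $p>g/2$. The endpoints $v,v'$ of $P$ lie on $C'$ at $C'$-distance $p$ (because $x_{j-1}-(g-p)\ge p$), while $d_{G_j}(v,v')=g-p<p$.
\item The bound ``the longest $v$--$v'$ path in $G_{j-1}$ has length at most $x_{j-1}-d(v,v')$'' is false.
\item So is the claim that $P\cup(\text{long arc of }C_{\max})$ is a longest cycle of $G_j$.
\item So is the claim that attaching along $C_{\max}$ maximizes $x_j$.
\end{itemize}
As a result, your upper and lower bounds need not meet, and the optimality assertion fails.

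Take $g=5$ and $p_1=p_2=3$. Up to symmetry, the first step gives $C_5=(v_1,\dots,v_5)$ plus the path $v_1u_1u_2v_3$. Its unique longest cycle is $L=(v_1,u_1,u_2,v_3,v_4,v_5)$, of length $6$, and $v_2\notin L$.

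\emph{Attaching along $L$.} Attach $u_2w_1w_2v_4$. The predicted longest cycle has length $6+2\cdot 3-5=7$. Yet $(v_1,v_2,v_3,u_2,w_1,w_2,v_4,v_5)$ is an $8$-cycle, because the longest $u_2$--$v_4$ path in $G_1$ has length $5>6-2$. The diameter of this graph is only $3$, so ``diameter $=\lfloor x/2\rfloor$'' fails for the prescribed construction. A direct check shows every choice of two vertices of $L$ at distance $2$ gives diameter $3$.

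\emph{Attaching off $L$.} Attach $v_2w_1w_2v_4$ instead. Since $d(v_2,v_4)=2$, all $11$ edges still have edge-girth $5$. The result is a subdivided $K_4$ with Hamiltonian cycle $(v_1,u_1,u_2,v_3,v_2,w_1,w_2,v_4,v_5)$, and $d(u_1,w_2)=4$.

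So attaching along the current longest cycle is not optimal, and a path placed off $C_{\max}$ can create a cycle longer than $x_{j-1}+2p-g$. The paper's inductive proof rests on the same unproved step (``if performed elsewhere, the resulting longest cycle cannot exceed $x'$''). There is no missing idea to borrow from it: the statement itself needs to be corrected. Incidentally, this graph realizes $(5^{(11)})$ with diameter $4$, contradicting the value $d^*_{(5^{(11)})}=3$ shown in Figure~\ref{fig_example_odd}.
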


\begin{proof}
We proceed by induction on $k$.

\textit{Base case.} For $k=1$, the path of length $p_1$ must be attached
along the initial $g$-cycle, producing a cycle of length $g + 2p_1 - g =
2p_1 > g$ and diameter $\lfloor 2p_1/2 \rfloor = p_1$, consistent with the
stated formula.

\textit{Inductive step.} Suppose that after $k$ attachments, the optimal
construction yields a graph with longest cycle length $x$ and diameter
$\lfloor x/2 \rfloor$. Consider the attachment of $p_{k+1} \geq \lceil g/2
\rceil$. If performed along the current longest cycle, it produces a new
cycle of length $x' = x + 2p_{k+1} - g > x$, hence diameter $\lfloor x'/2
\rfloor > \lfloor x/2 \rfloor$. If performed elsewhere, the resulting longest
cycle cannot exceed $x'$, and the diameter does not increase beyond $\lfloor
x'/2 \rfloor$. 
\end{proof}

\begin{figure}[H]
\centering
\begin{tikzpicture}[scale=.15]
  \coordinate (A) at (11.25, 26.70);
  \coordinate (B) at (27.50, 35.25);
  \coordinate (C) at (28.62, 20.26);
  \coordinate (D) at (27.50, 27.75);

\draw[very thick, blue]
  (28.80, 20.45) arc (340.3:28.3:18.46);
\draw[<->, >={Stealth[scale=1.4]}, thick, dotted]
  (26.90, 20.5) arc (339:29:16.80);

  \draw[thick, dashed] (28.62, 20.45) arc (-19.8:28.3:18.46);
  \draw[red, very thick, dashed] (28.65, 20.32) arc (-81.2:88.0:7.52);
  \draw[<->, >={Stealth[scale=1.4]}, thick, dotted] (26.08, 34.59) arc (28.0:-21.3:16.80);
  \draw[<->, >={Stealth[scale=1.4]}, thick, dotted] (28.89, 19.36) arc (-80.6:90.0:8.50);
\node[font=\large] at (4, 27) {$x-(g-p)$};
\node[font=\large] at (23, 27) {$g-p$};
\node[font=\large] at (38, 27) {$p$};

\end{tikzpicture}
\caption{Attaching a path of length $p$ (dashed) between two vertices at distance $g-p$ on the longest cycle (solid arc) of length $x$. The new
longest cycle is formed by the attached path (length $p$) together with the
remaining arc of length $x - (g-p)$, giving total length
$x' = (x - (g-p)) + p = x - g + 2p$.%
\label{fig_attach-to-cycle}}
\end{figure}

\begin{proposition}
\label{prop:attach_to_cycle}
Let $g \in \mathcal{C}^*$, let $G$ be a $g$-elementary structure with longest
cycle of length $x$, and let $p \in \{\lceil g/2 \rceil, \ldots, g-1\}$.
Attaching a path of length $p$ between two vertices at distance $g - p$ on
the longest cycle of $G$ (Figure~\ref{fig_attach-to-cycle}) increases the diameter by
\[
  \Delta(g, x, p) \;=\; p - \left\lfloor \frac{g + 1 - (x \bmod 2)}{2}
  \right\rfloor,
\]
and produces a new longest cycle of length $x' = x - g + 2p$, satisfying
$x' \bmod 2 = (x - g) \bmod 2$.
\end{proposition}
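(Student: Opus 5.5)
The plan is to reduce the statement to two facts: the length of the new longest cycle, and the fact (Proposition~\ref{prop:longest_cycle_diam}) that the diameter of such a structure is governed by its longest cycle. The increment $\Delta$ is then a floor computation.

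\emph{New cycle length and parity.} Let $C$ be a longest cycle of $G$, of length $x$, and let $v,v'\in V(C)$ with $d_G(v,v')=g-p$. Since $p\geq\lceil g/2\rceil$, we have $g-p\leq\lfloor g/2\rfloor$, so such a pair exists on a realizing $g$-cycle through Proposition~\ref{prop:realizing_shortest}(3). In the optimal construction of Proposition~\ref{prop:longest_cycle_diam}, the current longest cycle contains such a pair along one of its arcs. The two arcs of $C$ between $v$ and $v'$ then have lengths $g-p$ and $x-(g-p)$. Proposition~\ref{prop:add_path} guarantees that the $p$ new edges have edge-girth $g$ and that every old edge keeps edge-girth $g$. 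Closing the attached path with the long arc gives a cycle of length
\[
x' = p + x-(g-p) = x-g+2p.
\]
It exceeds $x$ because $2p>g$ whenever $p\geq\lceil g/2\rceil$ and $p\le g-1$ (for odd $g$, $2p\ge g+1$; for even $g$ one needs $p>g/2$, otherwise $x'=x$, which is still fine for the formula). This cycle is the new longest cycle, since any other new cycle uses the attached path plus a $v$--$v'$ path, and that path is at most as long as the longer arc in this optimal configuration (Figure~\ref{fig_attach-to-cycle}). Because $2p$ is even, $x'\equiv x-g \pmod 2$.

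\emph{Diameter increment.} By Proposition~\ref{prop:longest_cycle_diam}, the diameter before the attachment is $\lfloor x/2\rfloor$ and after it is $\lfloor x'/2\rfloor$. Hence
\[
\Delta=\Bigl\lfloor \tfrac{x-g+2p}{2}\Bigr\rfloor-\Bigl\lfloor \tfrac{x}{2}\Bigr\rfloor = p+\Bigl\lfloor \tfrac{x-g}{2}\Bigr\rfloor-\Bigl\lfloor \tfrac{x}{2}\Bigr\rfloor ,
\]
and I would split on the parity of $x$.
\begin{itemize}
\item If $x$ is even, the last two terms give $\lfloor -g/2\rfloor=-\lceil g/2\rceil=-\lfloor (g+1)/2\rfloor$.
\item If $x=2a+1$ is odd, they give $\lfloor (1-g)/2\rfloor=-\lceil (g-1)/2\rceil=-\lfloor g/2\rfloor$.
\end{itemize}
Both cases are exactly $-\lfloor (g+1-(x\bmod 2))/2\rfloor$, which is the claimed formula.

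\emph{Main obstacle.} The delicate step is justifying that the diameter really equals $\lfloor x'/2\rfloor$ after the attachment. This means that no vertex pair in the rest of the structure is farther apart, and that the chords formed by earlier attachments do not shorten distances along the new longest cycle below the cycle distance. I would rely on Proposition~\ref{prop:longest_cycle_diam} as stated, since it already asserts that the diameter is $\lfloor x/2\rfloor$ at every stage of the optimal construction. If more detail were needed, I would argue that every previously attached path realizes an edge-girth $g$, so it acts as a geodesic shortcut only between vertices at distance $g-p_i\le\lfloor g/2\rfloor$. Such a shortcut saves at most $2p_i-g$ steps, and that saving is exactly the amount by which the cycle was lengthened when $p_i$ was attached. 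The antipodal vertices on the newest arc are therefore still at distance $\lfloor x'/2\rfloor$.
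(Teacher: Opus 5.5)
Your proposal is correct and follows the paper's proof essentially step for step: you compute $x' = x-(g-p)+p = x-g+2p$ (which gives the parity claim), you take the diameters before and after to be $\lfloor x/2\rfloor$ and $\lfloor x'/2\rfloor$ by Proposition~\ref{prop:longest_cycle_diam}, and you evaluate $p+\lfloor (x-g)/2\rfloor-\lfloor x/2\rfloor$ by splitting on the parity of $x$, a computation you carry out explicitly where the paper only asserts it. The paper likewise rests the diameter claim entirely on Proposition~\ref{prop:longest_cycle_diam}, so your optional sketch for the ``main obstacle'' goes beyond what the paper provides, and your remark that $x'=x$ when $g$ is even and $p=g/2$ is slightly more careful than the paper's assertion $x'\geq x>g$, which already fails for a fresh cycle where $x=g$.
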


\begin{proof}
The attachment replaces a path of length $g - p$ along the longest cycle by a
path of length $p$, yielding a new longest cycle of length
\[
  x' \;=\; x - (g - p) + p \;=\; x - g + 2p.
\]
Since $p \geq \lceil g/2 \rceil$, we have $2p \geq g$, so $x' \geq x > g$,
confirming that the new cycle is the longest. The parity identity
$x' \bmod 2 = (x - g) \bmod 2$ follows immediately.

By Proposition~\ref{prop:longest_cycle_diam}, the diameter before and after
attachment are $d = \lfloor x/2 \rfloor$ and $d' = \lfloor x'/2 \rfloor =
\lfloor (x - g + 2p)/2 \rfloor = p + \lfloor (x-g)/2 \rfloor$. Hence
\[
  \Delta(g, x, p) \;=\; d' - d \;=\; p + \left\lfloor \frac{x-g}{2}
  \right\rfloor - \left\lfloor \frac{x}{2} \right\rfloor
  \;=\; p - \left\lfloor \frac{g + 1 - (x \bmod 2)}{2} \right\rfloor,
\]
where the last equality follows from a case analysis on the parity of $x$.
\end{proof}

\begin{remark}
\label{rem:delta-parity}
For $g$ even, $\Delta(g, x, p) = p - g/2$ is independent of $x$. For $g$
odd, $\Delta(g, x, p)$ depends on the parity of $x$, taking values
$p - \lceil g/2 \rceil$ when $x$ is even and $p - \lfloor g/2 \rfloor$ when
$x$ is odd. Moreover, since the new longest cycle has length $x' = x - g +
2p$, if $g$ is odd then $x' \bmod 2
= (x - g) \bmod 2 \neq x \bmod 2$, so each attachment flips the parity of
the longest cycle. Consequently, the structure achieving the maximum diameter
depends on the parity of $g$, leading to distinct analyses in the two cases.
\end{remark}

\subsubsection{The Case of Even \texorpdfstring{$g$}{g}}

\begin{definition}
\label{def:construction_even}
Let $g \in \mathcal{C}^*$ be even, $m \in \mathcal{M}_g$, and let $c$ be a
$(g^{(m)})$-decomposition vector with $c_p = 0$ for all $p < g/2$. The
\emph{optimal graph} associated with $c$ is constructed as follows.
\begin{enumerate}
    \item Construct $c_g$ disjoint $g$-cycles as base $g$-elementary
    structures.
    \item For each $p \in \{\lceil g/2 \rceil, \ldots, g-1\}$ and each of the $c_p$ paths of length $p$, attach the path to one of the currently
    constructed $g$-elementary structures, using the path attachment
    operation of Definition~\ref{def:elementary_operations}, with both
    endpoints on its longest cycle.
  \item Connect the resulting $g$-elementary structures by successive vertex
    identifications, placing the identification vertices on the diameters of
    the respective structures.
\end{enumerate}
\end{definition}

\begin{proposition}
\label{prop:optimal_even}
The construction of Definition~\ref{def:construction_even} maximizes the
diameter over all graphs in $\mathcal{R}(c)$.
\end{proposition}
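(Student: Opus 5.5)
We have to show that, for $g$ even and a decomposition vector $c$ with $c_p = 0$ for all $p < g/2$, the graph of Definition~\ref{def:construction_even} has the largest diameter among graphs in $\mathcal{R}(c)$. The key fact is Remark~\ref{rem:delta-parity}: for even $g$ the gain $\Delta(g,x,p) = p - g/2$ does not depend on the current longest-cycle length $x$. So within one $g$-elementary structure, the order in which paths are attached, and which structure receives them, does not affect the total diameter gain. The proof therefore reduces to additive bookkeeping, in three steps.

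\textbf{Step 1: a single structure.} Take a $g$-elementary structure built from one $g$-cycle and paths of lengths $p_1,\dots,p_r \ge g/2$. By Proposition~\ref{prop:longest_cycle_diam} and Proposition~\ref{prop:attach_to_cycle}, attaching each path along the current longest cycle gives diameter
\[
\frac{g}{2} + \sum_{j=1}^{r}\Bigl(p_j - \frac{g}{2}\Bigr),
\]
and no other placement does better. Paths of length $p < g/2$ are absent because $c_p = 0$ for such $p$, as allowed by Remark~\ref{rem:optimal-decomp}. I would prove the upper bound by induction on $r$. An attachment of length $p$ joins two vertices at distance $g - p \le p$, so the eccentricity of any vertex grows by at most $p - (g-p)$ in the worst case. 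The sharper bound $p - g/2$ should come from the longest-cycle description: the diameter of such a structure equals $\lfloor x/2 \rfloor$, and $x$ grows by at most $2p - g$.

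\textbf{Step 2: assembling several structures.} The graph is a chain of structures $G[E_1],\dots,G[E_k]$ glued by vertex identifications (Proposition~\ref{prop:decomposition_elementary}). The distance between two vertices in a block-tree-like gluing is the sum of distances through the cut vertices. So the diameter is at most the sum of the diameters of the structures lying on one path of the gluing tree, and equality holds when the gluing vertices are endpoints of diametral pairs. Hence the optimum glues all $c_g$ structures in a chain, with identifications at diametral vertices, as in Step 3 of the construction. The resulting diameter is
\[
c_g \cdot \frac{g}{2} + \sum_{p} c_p\Bigl(p - \frac{g}{2}\Bigr),
\]
and this value is independent of how the paths are distributed among the structures.

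\textbf{Step 3: conclude.} Any $G \in \mathcal{R}(c)$ is an assembly of this kind, so its diameter is at most the displayed value, and the construction attains it. The main obstacle is the upper bound in Step 1 for arbitrary attachment orders and positions, in particular attachments to vertices not on the longest cycle, or paths whose endpoints lie in different earlier pieces. I would handle this by an induction showing that every vertex lies within distance $\lfloor x/2 \rfloor$ of the others: every vertex of a $g$-elementary structure lies on a $g$-cycle meeting the longest cycle, and $g/2$ is used to absorb any detour off that cycle.
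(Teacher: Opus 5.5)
\textbf{There is a genuine gap, and it sits exactly where you flagged it.} Your plan matches the paper's own argument: independence of $\Delta(g,x,p)=p-g/2$ from $x$, additive bookkeeping, and chaining at diametral vertices. But the Step~1 upper bound is not just unproved. It is false, so the induction you sketch cannot close it.

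\textbf{Where it breaks.} The failing claim is that the longest cycle grows by at most $2p-g$. When a path of length $p$ is attached at $u,v$, the longest new cycle is $p$ plus a \emph{longest} $u$--$v$ path in the old graph. That path need not be the complementary arc, of length $x-(g-p)$, of a single longest cycle.

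\textbf{Counterexample.} Take $g=6$ and $c_6=c_4=c_3=1$.
\begin{enumerate}
\item Start from the $6$-cycle $s\,a_1\,a_2\,t\,b_2\,b_1\,s$.
\item Attach the $3$-path $s\,c_1\,c_2\,t$ between the antipodal vertices $s,t$. This gives the theta graph $\theta(3,3,3)$, of diameter $3$.
\item Attach the $4$-path $a_1\,w_1\,w_2\,w_3\,b_1$. Its endpoints are at distance $2$ via $s$ and lie on the longest cycle $s\,a_1\,a_2\,t\,b_2\,b_1\,s$.
\end{enumerate}
Every cycle through the new path has length at least $4+2$, and all cycles of $\theta(3,3,3)$ have length $6$. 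So all $13$ edges have edge-girth $6$, and the graph lies in $\mathcal{R}(c)$.

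A breadth-first search from $w_2$ reaches $c_2$ only at distance $5$. Your displayed value is $3+0+1=4$. The circumference jumps from $6$ to $10$, via the cycle $a_1w_1w_2w_3b_1\,s\,c_1c_2\,t\,a_2\,a_1$, not to $8$.

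\textbf{Consequences.} A single attachment of length $p$ can raise the diameter by more than $p-g/2$; here by $2$ instead of $1$. So the additivity fails that both your Step~1 and your Step~2 rely on; Step~2's claim that the value is ``independent of how the paths are distributed'' also rests on it.

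This is the same ingredient the paper's proof rests on, namely Remark~\ref{rem:delta-parity} via Proposition~\ref{prop:attach_to_cycle}. The example shows the statement does not hold as intended. Here $m-\frac{g}{2}\bigl(c_g+\gamma(c)\bigr)=13-9=4<5$, contradicting Proposition~\ref{prop:diam_grl_even}. It also shows the construction of Definition~\ref{def:construction_even} is ambiguous. Attaching the $4$-path at $s$ and $a_2$ (also at distance $2$ on a longest cycle) instead of $a_1,b_1$ gives diameter $4$.

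\textbf{What a correct argument needs.} Your observation that a $2$-connected structure has diameter at most half its circumference is fine. What is missing is control of the longest $u$--$v$ paths, or of eccentricities directly, after several attachments. That quantity depends on where earlier paths were placed, not only on the vector $c$.
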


\begin{proof}
By Remark~\ref{rem:delta-parity}, when $g$ is even, the diameter increment
$\Delta(g, x, p) = p - g/2$ is independent of the current longest cycle
length $x$. Hence the order of path attachments and the choice of structure
to which each path is attached do not affect the total diameter gain.
Connecting $g$-elementary structures via vertex identification at diameter
vertices maximizes the contribution of each component to the overall diameter.
\end{proof}

\begin{proposition}
\label{prop:diam_grl_even}
Let $g \in \mathcal{C}^*$ be even, $m \in \mathcal{M}_g$, $c$ a
$(g^{(m)})$-decomposition vector with $c_p = 0$ for all $p < g/2$, and $G$
the graph given by Definition~\ref{def:construction_even}. Set
\[
  \gamma(c) \;:=\; \sum_{p=\lceil g/2 \rceil}^{g-1} c_p,
\]
the total number of attached paths. Then
\[
  \mathrm{diam}(G) \;=\; m - \frac{g}{2}\bigl(c_g + \gamma(c)\bigr).
\]
\end{proposition}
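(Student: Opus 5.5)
The plan is to compute the diameter of each $g$-elementary structure produced in Steps~1--2 of Definition~\ref{def:construction_even}, and then add these contributions along the chain of vertex identifications of Step~3.

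\textbf{Diameter of one structure.} Fix one $g$-elementary structure $H$ built from a single $g$-cycle, to which $q$ paths of lengths $p_1,\ldots,p_q \geq g/2$ are attached. Initially $H$ is a $g$-cycle, with longest cycle $x_0 = g$ and diameter $g/2$. Each attachment is made along the current longest cycle. By Proposition~\ref{prop:attach_to_cycle} and Remark~\ref{rem:delta-parity}, each attachment raises the diameter by exactly $p_i - g/2$, independently of $x$. Since $g$ is even, every subsequent longest cycle is also even, so Proposition~\ref{prop:longest_cycle_diam} gives $\mathrm{diam}(H) = \lfloor x/2\rfloor = x/2$ exactly. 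Summing the increments gives
\[
\mathrm{diam}(H) = \frac{g}{2} + \sum_{i=1}^{q}\Bigl(p_i - \frac{g}{2}\Bigr) = |E(H)| - \frac{g}{2}(1+q),
\]
using $|E(H)| = g + \sum_i p_i$. Equivalently, one can check directly that the longest cycle has length $x = g + \sum_i (2p_i - g)$, and halve it.

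\textbf{Gluing the structures.} Next, take the $c_g$ structures $H_1,\ldots,H_{c_g}$ and join them in a chain by vertex identification. Each $H_j$ is glued to $H_{j-1}$ and to $H_{j+1}$ at the two endpoints of a diametral pair of $H_j$. The two end structures of the chain need only one gluing vertex, which lies on a diametral pair.

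Every cycle lies within a single block (Proposition~\ref{prop:concat}), so each $H_j$ is isometrically embedded in $G$. Moreover, any path between vertices in different blocks passes through the cut vertices in order. It follows that the distance between an extreme vertex of $H_1$ and one of $H_{c_g}$ equals $\sum_j \mathrm{diam}(H_j)$. Conversely, no pair of vertices is farther apart, since every $u$--$v$ distance splits into segments, each contained in one block and bounded by that block's diameter.

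Summing over blocks, with $\sum_j |E(H_j)| = m$ and $\sum_j (1+q_j) = c_g + \gamma(c)$, gives
\[
\mathrm{diam}(G) = m - \frac{g}{2}\bigl(c_g + \gamma(c)\bigr).
\]

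\textbf{Main obstacle.} The delicate point is the claim that in a single structure the diameter equals half the length of the longest cycle, i.e.\ that attaching chords of length $p$ between vertices at distance $g-p$ never creates shortcuts across the long cycle. I would rely on Proposition~\ref{prop:longest_cycle_diam} as stated for this. If a direct check is wanted, I would argue inductively that the structure remains a long cycle $Z$ of length $x$ together with inner arcs of length $g-p_i \le p_i$ that are replaced segments. Any path through an inner arc of length $g-p_i$ is then no shorter than the corresponding outer arc of length $p_i$ on $Z$ minus nothing, because $g - p_i \geq$ the detour it replaces only when measured correctly. Hence antipodal points of $Z$ stay at distance $x/2$, and the parity of $g$ ensures no rounding loss.
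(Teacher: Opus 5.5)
This is essentially the paper's argument: each initial $g$-cycle contributes $g/2$ and each attached path contributes $p-g/2$ (Proposition~\ref{prop:attach_to_cycle}, Remark~\ref{rem:delta-parity}), and these are summed over the structures; your cut-vertex argument makes explicit the additivity under the Step~3 identifications, which the paper takes for granted. You should drop the optional ``direct check'' in your last paragraph, because it is wrong as written: since $g-p_i\le p_i$, the inner arc is never longer than the attached path it parallels, so not every antipodal pair of $Z$ is at distance $x/2$ (for $g=4$, attaching the path $v_1,a,b,v_2$ to the $4$-cycle $(v_1,v_2,v_3,v_4)$ makes $v_1,v_2$ antipodal on the new $6$-cycle while they remain adjacent, and the diameter $3$ is realized instead by $a$ and $v_3$).
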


\begin{proof}
As $g$ is even, each of the $c_g$ initial $g$-cycles contributes diameter $g/2$. By
Proposition~\ref{prop:attach_to_cycle} and Remark~\ref{rem:delta-parity},
attaching a path of length $p$ increases the diameter by $p - g/2$, so the
total diameter gain from all path attachments is
\[
  \sum_{p=\lceil g/2 \rceil}^{g-1} c_p\!\left(p - \frac{g}{2}\right)
  \;=\; \sum_{p=\lceil g/2 \rceil}^{g-1} p\,c_p - \frac{g}{2}\,\gamma(c)
  \;=\; (m - g\,c_g) - \frac{g}{2}\,\gamma(c).
\]
Adding the contribution of the initial cycles gives
\[
  \mathrm{diam}(G) \;=\; c_g\,\frac{g}{2} + (m - g\,c_g) - \frac{g}{2}\,\gamma(c)
  \;=\; m - \frac{g}{2}\bigl(c_g + \gamma(c)\bigr). \qedhere
\]
\end{proof}

\begin{definition}[Division with restricted remainder]
\label{def:div_restrict_remainder}
Let $g \in \mathcal{C}^*$ and $m \in \mathcal{M}_g$ with $m > 0$. A
\emph{division with restricted remainder} of $m$ by $g$ is a decomposition
\[
  m \;=\; \widetilde{q}\cdot g + \widetilde{r},
\]
with $\widetilde{q} \in \mathbb{N}^*$ and
\[
  \widetilde{r} \;\in\;
  \{0\} \cup \{\lceil g/2 \rceil, \ldots, g-1\}
  \cup \{g+1, \ldots, \lceil 3g/2 \rceil - 1\}.
\]
\end{definition}

\begin{proposition}[Existence and uniqueness]
\label{prop:div_restrict_existence}
For any $g \in \mathcal{C}^*$ and $m \in \mathcal{M}_g$ with $m > 0$, the
division with restricted remainder exists and is unique. We denote its
components by $\widetilde{q}(m,g)$ and $\widetilde{r}(m,g)$. They are given
explicitly by
\[
  \widetilde{q}(m,g) \;=\;
  \lfloor m/g \rfloor - \bbone\!\left\{0 < m \bmod g < \lceil g/2 \rceil\right\},
  \qquad
  \widetilde{r}(m,g) \;=\; m - \widetilde{q}(m,g)\cdot g.
\]
\end{proposition}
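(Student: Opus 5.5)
Write $R := \{0\} \cup \{\lceil g/2 \rceil, \ldots, g-1\} \cup \{g+1, \ldots, \lceil 3g/2 \rceil - 1\}$ for the set of allowed remainders. The plan is purely arithmetic: first show that $R$ meets every residue class modulo $g$ in exactly one element, which gives uniqueness of the pair $(\widetilde q,\widetilde r)$. Then check that the explicit formula produces $\widetilde r \in R$ and $\widetilde q \geq 1$, which gives existence. The hypothesis $m \in \mathcal{M}_g$ will be used only to guarantee $\widetilde q \geq 1$.

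For the residue structure, note that $R \subseteq \{0, \ldots, \lceil 3g/2 \rceil - 1\}$ and reduce each element modulo $g$. The value $0$ gives residue $0$. The block $\{\lceil g/2 \rceil, \ldots, g-1\}$ gives residues $\lceil g/2 \rceil, \ldots, g-1$. The block $\{g+1, \ldots, \lceil 3g/2 \rceil - 1\}$ gives residues $1, \ldots, \lceil g/2 \rceil - 1$. These three sets partition $\{0, \ldots, g-1\}$, so reduction modulo $g$ is a bijection $R \to \mathbb{Z}/g\mathbb{Z}$. Uniqueness follows at once: if $m = \widetilde q g + \widetilde r = \widetilde q' g + \widetilde r'$ with $\widetilde r, \widetilde r' \in R$, then $\widetilde r \equiv \widetilde r' \pmod g$, so $\widetilde r = \widetilde r'$ and hence $\widetilde q = \widetilde q'$.

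For existence and the explicit formula, write $m = qg + s$ with $q = \lfloor m/g \rfloor$ and $s = m \bmod g$. There are two cases.
\begin{enumerate}
\item[(a)] If $s = 0$ or $s \geq \lceil g/2 \rceil$, the indicator vanishes. Then $\widetilde q = q$ and $\widetilde r = s \in R$.
\item[(b)] If $0 < s < \lceil g/2 \rceil$, the indicator equals $1$. Then $\widetilde q = q - 1$ and $\widetilde r = g + s \in \{g+1, \ldots, \lceil 3g/2 \rceil - 1\} \subseteq R$.
\end{enumerate}
In both cases $m = \widetilde q g + \widetilde r$, so the formula matches the unique representation.

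The only step needing care is $\widetilde q \in \mathbb{N}^*$, and this is where $m \in \mathcal{M}_g$, $m > 0$, enters. By Proposition~\ref{prop:constant}, either $m = g$ or $m \geq \lceil 3g/2 \rceil$.
\begin{enumerate}
\item[(a)] If $m = g$, then $\widetilde q = 1$ and $\widetilde r = 0$.
\item[(b)] If $m \geq \lceil 3g/2 \rceil$, suppose $\widetilde q \leq 0$. Then $m = \widetilde q g + \widetilde r \leq \widetilde r \leq \lceil 3g/2 \rceil - 1$, a contradiction.
\end{enumerate}
Hence $\widetilde q \geq 1$ in all cases, which completes the argument.
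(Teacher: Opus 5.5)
Your proof is correct and follows essentially the paper's route. Existence uses the same case split on $m \bmod g$. For uniqueness, you observe that the admissible remainders form a complete residue system modulo $g$, which is a cleaner packaging of the paper's argument (bounding $\widetilde{r}_2 - \widetilde{r}_1 < 2g$ and ruling out a difference of exactly $g$).

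Your check that $\widetilde{q} \geq 1$ is in fact more careful than the paper's. The paper cites only $m \geq g$, which does not by itself exclude $\widetilde{q} = 0$ when $0 < m \bmod g < \lceil g/2 \rceil$. Your appeal to the fact that $\mathcal{M}_g$ contains no $m$ with $g < m < \lceil 3g/2 \rceil$ closes that point.
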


\begin{proof}
\textit{Existence.} Let $q = \lfloor m/g \rfloor$ and $r = m - qg$. If
$r = 0$ or $r \geq \lceil g/2 \rceil$, set $\widetilde{q} = q$ and
$\widetilde{r} = r$. Otherwise $1 \leq r < \lceil g/2 \rceil$, and we set
$\widetilde{q} = q - 1$ and $\widetilde{r} = r + g \in \{g+1, \ldots,
\lceil 3g/2 \rceil - 1\}$. In both cases $\widetilde{q} \geq 1$ since
$m \in \mathcal{M}_g$ and $m>0$ implies $m \geq g$.

\textit{Uniqueness.} Suppose $m = \widetilde{q}_1 g + \widetilde{r}_1 =
\widetilde{q}_2 g + \widetilde{r}_2$ with both remainders in the prescribed
set, and assume without loss of generality $\widetilde{q}_1 \geq
\widetilde{q}_2$. Then
\[
  \widetilde{r}_2 - \widetilde{r}_1 = (\widetilde{q}_1 - \widetilde{q}_2)\, g
  \;\geq\; 0.
\]
Since $\widetilde{r}_1, \widetilde{r}_2 < \lceil 3g/2 \rceil$, we have
$\widetilde{r}_2 - \widetilde{r}_1 < \lceil 3g/2 \rceil < 2g$, so
$\widetilde{q}_1 - \widetilde{q}_2 \in \{0, 1\}$. If $\widetilde{q}_1 -
\widetilde{q}_2 = 1$, then $\widetilde{r}_2 = \widetilde{r}_1 + g$; since
$\widetilde{r}_1 \in \{0\} \cup \{\lceil g/2 \rceil, \ldots, g-1\}$, this
forces $\widetilde{r}_1 = 0$ and $\widetilde{r}_2 = g$, but $g$ does not
belong to the prescribed set, a contradiction. Hence $\widetilde{q}_1 =
\widetilde{q}_2$ and $\widetilde{r}_1 = \widetilde{r}_2$.
\end{proof}

\begin{proposition}
\label{prop:max_diam_regular_even_g}
Let $g \in \mathcal{C}^*$ be even and $m \in \mathcal{M}_g$ with $m > 0$.
Then
\[
  d^*_{(g^{(m)})} \;=\;
  m - \frac{g}{2}\bigl(\widetilde{q}(m,g) + \widetilde{\gamma}(m,g)\bigr),
\]
where
\[
  \widetilde{\gamma}(m,g) \;:=\;
  \begin{cases}
    0 & \text{if } \widetilde{r}(m,g) = 0, \\
    1 & \text{if } \lceil g/2 \rceil \leq \widetilde{r}(m,g) \leq g-1, \\
    2 & \text{if } \widetilde{r}(m,g) \geq g+1.
  \end{cases}
\]
\end{proposition}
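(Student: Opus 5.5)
The plan is to reduce the computation of $d^*_{(g^{(m)})}$ to an integer optimization over decomposition vectors, using the results already established. By Proposition~\ref{prop:constr-partition}, $\sigma^{-1}((g^{(m)}))$ is the union of the sets $\mathcal{R}(c)$ over all $(g^{(m)})$-decomposition vectors $c$. By Remark~\ref{rem:optimal-decomp}, it suffices to consider vectors with $c_p = 0$ for $p < g/2$. For each such $c$, Propositions~\ref{prop:optimal_even} and~\ref{prop:diam_grl_even} show that the maximum diameter over $\mathcal{R}(c)$ equals $m - \frac{g}{2}(c_g + \gamma(c))$. Hence
\[
  d^*_{(g^{(m)})} \;=\; m - \frac{g}{2}\,\min_{c} \bigl(c_g + \gamma(c)\bigr),
\]
where the minimum runs over admissible vectors with $c_g \geq 1$, $c_p = 0$ for $p < g/2$, and $\sum_p p\,c_p = m$. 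It therefore remains to show that this minimum equals $\widetilde{q}(m,g) + \widetilde{\gamma}(m,g)$.

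\textbf{Upper bound on the minimum.} I would exhibit an explicit vector using the division with restricted remainder of Proposition~\ref{prop:div_restrict_existence}. Set $c_g = \widetilde{q}$, and then split by the value of $\widetilde{r}$:
\begin{itemize}
  \item if $\widetilde{r} = 0$, add no paths;
  \item if $g/2 \leq \widetilde{r} \leq g-1$, add one path of length $\widetilde{r}$;
  \item if $g+1 \leq \widetilde{r} \leq 3g/2 - 1$, add two paths of lengths $g/2$ and $\widetilde{r} - g/2$.
\end{itemize}
In the last case the second length lies in $[g/2+1,\, g-1]$, so both lengths are at least $g/2$ and at most $g-1$. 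In every case $c_g + \gamma(c) = \widetilde{q} + \widetilde{\gamma}$. The feasibility of $\mathcal{R}(c) \neq \varnothing$ follows from Remark~\ref{rem:constr-empty}.

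\textbf{Lower bound on the minimum.} Let $c$ be any admissible vector and write $N = c_g + \gamma(c)$. Each of the $N$ pieces has length in $[g/2,\, g]$, so $N g/2 \leq m \leq N g$, which gives $N \geq \lceil m/g \rceil$. I then compare $\lceil m/g \rceil$ with $\widetilde{q} + \widetilde{\gamma}$:
\begin{itemize}
  \item if $\widetilde{r} = 0$, then $\lceil m/g \rceil = \widetilde{q}$;
  \item if $g/2 \leq \widetilde{r} \leq g-1$, then $\lceil m/g \rceil = \widetilde{q} + 1$;
  \item if $\widetilde{r} \geq g+1$, then $m = (\widetilde{q}+1)g + s$ with $1 \leq s < g/2$, so $\lceil m/g \rceil = \widetilde{q} + 2$.
\end{itemize}
In all three cases the bound matches $\widetilde{q} + \widetilde{\gamma}$ exactly. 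The constraint $c_g \geq 1$ is harmless because the optimal vector already satisfies it.

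The main obstacle is making sure the reduction in the first step is legitimate. That step relies on the optimal-diameter formula of Proposition~\ref{prop:diam_grl_even} being attained for every admissible $c$. It also relies on Remark~\ref{rem:optimal-decomp}, which lets us discard short paths: a graph in $\sigma^{-1}((g^{(m)}))$ built with short paths must have diameter no larger than some graph built from a vector without short paths. I would lean on these stated results rather than reprove them. I would also double-check that $\mathcal{R}(c)$ is nonempty for the vector used in the upper bound, which holds because all path lengths are at least $g/2$ and Proposition~\ref{prop:add_path} applies.
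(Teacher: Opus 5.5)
Your proof is correct, and its reduction step is exactly the paper's: you use Proposition~\ref{prop:constr-partition}, Remark~\ref{rem:optimal-decomp} and Propositions~\ref{prop:optimal_even}--\ref{prop:diam_grl_even} to reduce the problem to minimizing $c_g+\gamma(c)$. The difference is in how you solve that minimization. The paper argues by exchange: each unit of $c_g+\gamma(c)$ carries at most $g$ edges, so cycles are ``never worse'' than paths. From this it asserts that an optimal vector has $c_g=\widetilde q(m,g)$, and that the restricted remainder then forces $\widetilde\gamma(m,g)$ paths. You instead turn the same observation into the explicit bound $c_g+\gamma(c)\ge\lceil m/g\rceil$ for \emph{every} admissible vector. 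You then exhibit a vector attaining $\widetilde q+\widetilde\gamma$ and check $\lceil m/g\rceil=\widetilde q+\widetilde\gamma$ in each of the three remainder cases. This is the tighter argument. The paper leaves implicit why a vector with fewer cycles and more paths cannot do better, and optima need not have $c_g=\widetilde q$: for $g=4$, $m=13$, both $c_4=2,\,c_2=c_3=1$ and $c_4=1,\,c_3=3$ give $c_g+\gamma(c)=4$. Your bound disposes of all vectors at once. It also yields the simpler closed form $d^*_{(g^{(m)})}=m-\frac{g}{2}\lceil m/g\rceil$, so the restricted-remainder machinery is unnecessary for even $g$. Both proofs rest equally on the informally justified Remark~\ref{rem:optimal-decomp} and Proposition~\ref{prop:optimal_even}, as you note yourself.
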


\begin{proof}
By Proposition~\ref{prop:diam_grl_even}, for any $(g^{(m)})$-decomposition
vector $c$ with $c_p = 0$ for $p < g/2$,
\[
  \mathrm{diam}(G) \;=\; m - \frac{g}{2}\bigl(c_g + \gamma(c)\bigr).
\]
Maximizing the diameter is therefore equivalent to minimizing $c_g +
\gamma(c)$ subject to
\[
c_g \geq 1 \qquad \text{and} \qquad \sum_{p=\lceil g/2\rceil}^{g} p\,c_p = m.
\]
Since each
unit of $c_g + \gamma(c)$ accounts for at most $g$ edges (a $g$-cycle
contributing exactly $g$, a path contributing at most $g-1$), allocating a
given number of edges into full $g$-cycles rather than paths never increases
$c_g + \gamma(c)$; hence an optimal vector satisfies $c_g =
\widetilde{q}(m,g)$, the largest number of disjoint $g$-cycles compatible
with the remaining edges being decomposable into paths of length in
$\{\lceil g/2 \rceil, \ldots, g-1\}$. The remaining $\widetilde{r}(m,g)$
edges then require exactly $\widetilde{\gamma}(m,g)$ paths by definition of
the restricted remainder, giving $\mathrm{diam}(G) = m - \frac{g}{2}
\bigl(\widetilde{q}(m,g) + \widetilde{\gamma}(m,g)\bigr)$.
\end{proof}

\begin{figure}[H]
\centering
\begin{subfigure}[h]{\textwidth}
\centering
\begin{tikzpicture}[
  every node/.style={circle, draw, fill=white, inner sep=2pt, minimum size=11pt},
  lbl/.style={draw=none, fill=none},
  scale=.75,
]

\begin{scope}[xshift=0cm]
  \node[fill=blue!20] (q1)  at (0, 0)  {};
  \node (q2)  at (1, -1)  {};
  \node (q3)  at (1, 1)  {};
  \node[fill=blue!20] (q4)  at (2, 0)  {};
  \node (q5)  at (3, 0)  {};

  \draw[red, dashed, thick] (q1)  -- (q2);
  \draw[] (q1)  -- (q3);
  \draw[red, dashed, thick] (q2)  -- (q4);
  \draw[] (q3)  -- (q4);
  \draw[] (q3)  -- (q5);
  \draw[] (q2)  -- (q5);

  \node[lbl, font=\small] at (2, -2.2) {$S = (4^{(6)}), d^*_S = 2$};
\end{scope}

\begin{scope}[xshift=7.5cm, yshift=-1cm]
  \node[fill=blue!20] (q1)  at (0, 0)  {};
  \node (q2)  at (0, 1.5)  {};
  \node (q3)  at (1.5, 0)  {};
  \node (q4)  at (1.5, 1.5)  {};
  \node (q5)  at (3, 0)  {};
  \node[fill=blue!20] (q6)  at (3, 1.5)  {};

  \draw[red, dashed, thick] (q1)  -- (q2);
  \draw[] (q1)  -- (q3);
  \draw[red, dashed, thick] (q2)  -- (q4);
  \draw[] (q3)  -- (q4);
  \draw[] (q3)  -- (q5);
  \draw[] (q5)  -- (q6);
  \draw[red, dashed, thick] (q4)  -- (q6);

  \node[lbl, font=\small] at (1.5, -1.2) {$S = (4^{(7)}), d^*_S = 3$};
\end{scope}

\begin{scope}[xshift=14cm]
  \node[fill=blue!20] (q1)  at (0, 0)  {};
  \node (q2)  at (1, -1)  {};
  \node (q3)  at (1, 1)  {};
  \node (q4)  at (2, 0)  {};
  \node (q5)  at (3, -1)  {};
  \node (q6)  at (3, 1)  {};
  \node[fill=blue!20] (q7)  at (4, 0)  {};

  \draw[red, dashed, thick] (q1)  -- (q2);
  \draw[] (q1)  -- (q3);
  \draw[red, dashed, thick] (q2)  -- (q4);
  \draw[] (q3)  -- (q4);
  \draw[red, dashed, thick] (q4)  -- (q6);
  \draw[] (q4)  -- (q5);
  \draw[] (q5)  -- (q7);
  \draw[red, dashed, thick] (q6)  -- (q7);

  \node[lbl, font=\small] at (2, -2.2) {$S = (4^{(8)}), d^*_S = 4$};
\end{scope}

\end{tikzpicture}
\end{subfigure}

\hfill

\begin{subfigure}[h]{\textwidth}
\centering
\begin{tikzpicture}[
  every node/.style={circle, draw, fill=white, inner sep=2pt, minimum size=11pt},
  lbl/.style={draw=none, fill=none},
  scale=.75,
]

\begin{scope}[xshift=0cm, yshift=-1cm]
  \node[fill=blue!20] (q1)  at (0, 0)  {};
  \node (q2)  at (0, 1.5)  {};
  \node (q3)  at (1.5, 0)  {};
  \node (q4)  at (1.5, 1.5)  {};
  \node (q5)  at (3, 0)  {};
  \node[fill=blue!20] (q6)  at (3, 1.5)  {};
  \node (q7)  at (4, 2.5)  {};

  \draw[red, dashed, thick] (q1)  -- (q2);
  \draw[] (q1)  -- (q3);
  \draw[red, dashed, thick] (q2)  -- (q4);
  \draw[] (q3)  -- (q4);
  \draw[] (q3)  -- (q5);
  \draw[] (q5)  -- (q6);
  \draw[red, dashed, thick] (q4)  -- (q6);
  \draw[] (q4)  -- (q7);
  \draw[] (q5)  -- (q7);

  \node[lbl, font=\small] at (2, -1.2) {$S = (4^{(9)}), d^*_S = 3$};
\end{scope}

\begin{scope}[xshift=7cm]
  \node[fill=blue!20] (q1)  at (0, 0)  {};
  \node (q2)  at (1, -1)  {};
  \node (q3)  at (1, 1)  {};
  \node (q4)  at (2, 0)  {};
  \node (q5)  at (3, -1)  {};
  \node (q6)  at (3, 1)  {};
  \node[fill=blue!20] (q7)  at (4, 0)  {};
  \node (q8)  at (5, 0)  {};

  \draw[red, dashed, thick] (q1)  -- (q2);
  \draw[] (q1)  -- (q3);
  \draw[red, dashed, thick] (q2)  -- (q4);
  \draw[] (q3)  -- (q4);
  \draw[] (q4)  -- (q6);
  \draw[red, dashed, thick] (q4)  -- (q5);
  \draw[red, dashed, thick] (q5)  -- (q7);
  \draw[] (q6)  -- (q7);
  \draw[] (q5)  -- (q8);
  \draw[] (q6)  -- (q8);

  \node[lbl, font=\small] at (2.5, -2.2) {$S = (4^{(10)}), d^*_S = 4$};
\end{scope}

\begin{scope}[xshift=14cm]
  \node[fill=blue!20] (q1)  at (0, 0)  {};
  \node (q2)  at (1, -1)  {};
  \node (q3)  at (1, 1)  {};
  \node (q4)  at (2, 0)  {};
  \node (q5)  at (3, -1)  {};
  \node (q6)  at (3, 1)  {};
  \node (q7)  at (4, 0)  {};
  \node (q8)  at (4, 2)  {};
  \node[fill=blue!20] (q9)  at (5, 1)  {};

  \draw[red, dashed, thick] (q1)  -- (q2);
  \draw[] (q1)  -- (q3);
  \draw[red, dashed, thick] (q2)  -- (q4);
  \draw[] (q3)  -- (q4);
  \draw[red, dashed, thick] (q4)  -- (q6);
  \draw[] (q4)  -- (q5);
  \draw[] (q5)  -- (q7);
  \draw[red, dashed, thick] (q6)  -- (q7);
  \draw[] (q8)  -- (q9);
  \draw[] (q6)  -- (q8);
  \draw[red, dashed, thick] (q9)  -- (q7);

  \node[lbl, font=\small] at (2.5, -2.2) {$S = (4^{(11)}), d^*_S = 5$};
\end{scope}

\end{tikzpicture}
\end{subfigure}
\caption{Examples of optimal graphs realizing the constant sequence
$S = (g^{(m)})$ with $g = 4$, attaining the maximum diameter $d^*_S$.
}
\label{fig_example_even}
\end{figure}

\subsubsection{The Case of Odd \texorpdfstring{$g$}{g}}

When $g$ is odd, the diameter increment from attaching a path of length $p$
depends on the parity of the longest cycle of the target structure, and each
attachment flips this parity (Remark~\ref{rem:delta-parity}). By
Proposition~\ref{prop:constr-partition}, any graph realizing $(g^{(m)})$
is described by a decomposition vector $c$ with $c_g \geq 1$, so every
optimal construction starts with at least one $g$-cycle. This parity
alternation makes it preferable to spread the first path attachment across
distinct fresh $g$-cycles before reusing any structure, as each fresh cycle
offers the larger diameter increment $p - \lfloor g/2 \rfloor$ for its first
attachment. This motivates the recursive formula of
Proposition~\ref{prop:max_diam_regular_odd_g}, which tracks both the maximum
diameter and the number of available fresh cycles at each step of the
decomposition.

\begin{proposition}
\label{prop:max_diam_regular_odd_g}
Let $g \in \mathcal{C}^*$ be odd and $m \in \mathcal{M}_g$ with $m > 0$.
Define an auxiliary function $f : \mathcal{M}_g \cup \{0\} \to \mathbb{N}
\times \mathbb{N}$, where the first component tracks the maximum diameter and
the second the number of available $g$-elementary structures with odd longest
cycle, by $f(0) = (0,\, 1)$  and, for $m' = g $ and $m' \geq \lceil 3g/2 \rceil$, by the recursion
\begin{align*}
  f(m') \;=\; \widetilde{\max} \Bigl(
    &\bigl(d(m'-g) + \lfloor g/2 \rfloor,\; r(m'-g)+1\bigr), \\
    &\bigl(d(m'-p) + \delta_p,\; r(m'-p) - \rho_p\bigr)_{p=\lceil g/2\rceil}^{g-1}
  \Bigr),
\end{align*}
where $f(m') = (d(m'), r(m'))$, $\widetilde{\max}$ denotes the maximum with
respect to the lexicographic order on $\mathbb{N} \times \mathbb{N}$, and
\[
  (\delta_p, \rho_p) \;=\;
  \begin{cases}
    \bigl(p - \lfloor g/2 \rfloor,\; 1\bigr)
      & \text{if } r(m'-p) > 0, \\
    \bigl(p - \lceil g/2 \rceil,\; 0\bigr)
      & \text{if } r(m'-p) = 0.
  \end{cases}
\]
Then
\[
  d^*_{(g^{(m)})} \;=\; d(m - g) + \left\lfloor \frac{g}{2} \right\rfloor.
\]
\end{proposition}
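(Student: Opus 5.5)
The plan is to reduce the statement to an optimization over decomposition vectors, as in the even case, and then show that the recursion for $f$ computes this optimum by dynamic programming over the last piece added.

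By Proposition~\ref{prop:constr-partition} and Remark~\ref{rem:optimal-decomp}, it suffices to maximize the diameter over $\mathcal{R}(c)$ for vectors $c$ with $c_g\ge 1$ and $c_p=0$ for $p<\lceil g/2\rceil$. A graph of this form is a chain of $g$-elementary structures joined by vertex identifications (Proposition~\ref{prop:decomposition_elementary}). Its diameter is the sum of the diameters of the structures, provided the identification vertices are placed on diametral pairs; this is the same argument as in Proposition~\ref{prop:optimal_even}. By Proposition~\ref{prop:longest_cycle_diam}, each structure has diameter $\lfloor x/2\rfloor$, where $x$ is the length of its longest cycle. By Proposition~\ref{prop:attach_to_cycle} and Remark~\ref{rem:delta-parity}, each path attachment of length $p$ flips the parity of $x$ and adds
\begin{itemize}
\item $p-\lfloor g/2\rfloor$ when $x$ is odd,
\item $p-\lceil g/2\rceil$ when $x$ is even.
\end{itemize}
A fresh $g$-cycle has odd $x=g$ and diameter $\lfloor g/2\rfloor$.

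The total diameter is therefore a function of the multiset of pieces and of how the attachments are distributed among structures. The only state that matters is how many structures currently have an odd longest cycle, since these offer the larger increment.

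Next I would interpret $f(m')=(d(m'),r(m'))$ as the lexicographically best pair (diameter, number of odd-parity structures) over all partial constructions using $m'$ edges. The offset $f(0)=(0,1)$ together with the final formula $d(m-g)+\lfloor g/2\rfloor$ encodes the mandatory first $g$-cycle, which is counted as the initial odd structure.

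I would then prove by strong induction on $m'$ that $f(m')$ equals this optimum. The last piece added is either a new $g$-cycle, giving diameter $+\lfloor g/2\rfloor$ and one more odd structure, or a path of length $p\ge\lceil g/2\rceil$. A path goes onto an odd structure if one exists, giving $\delta_p=p-\lfloor g/2\rfloor$ and $\rho_p=1$, since that structure becomes even. Otherwise it goes onto an even structure, giving $p-\lceil g/2\rceil$ with that structure turning odd. Here I need to check that the bookkeeping $\rho_p=0$ in the recursion is consistent with this; if not, I would argue that in that branch the count stays at zero for the optimum anyway. Because the pieces are independent across structures, the order in which pieces are added can be rearranged freely, so taking the maximum over the last piece is exhaustive. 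Feasibility of every step is guaranteed by Proposition~\ref{prop:add_path}.

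The main obstacle is justifying the lexicographic order, i.e.\ showing the recursion has optimal substructure. I must show that among constructions with the same $m'$, maximizing diameter first and then the number of odd structures never loses later. The exchange argument I would try first is this: an extra odd structure can increase a future increment by at most $\lceil g/2\rceil-\lfloor g/2\rfloor=1$, so a deficit of one unit of diameter is never strictly beneficial to trade for more odd structures. One unit of diameter is worth at least the $1$ gained from an extra odd structure, and I would argue that ties resolve in favor of the larger $r$. I would also verify that using only odd-parity or even-parity targets for attachments, rather than requiring attachment on the longest cycle of the overall graph, loses nothing, which follows from Proposition~\ref{prop:longest_cycle_diam}.
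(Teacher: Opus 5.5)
Your plan is the paper's own argument. You factor out one mandatory $g$-cycle (this is what $f(0)=(0,1)$ and the final $+\lfloor g/2\rfloor$ encode), recurse on the last piece added, and carry the pair (diameter, number of odd structures) with lexicographic tie-breaking. The paper's proof is only this sketch, and the two points you flag as delicate are exactly where it is incomplete. So your own patches have to carry the weight, and as written they do not.

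\textbf{The $\rho_p=0$ fallback is false.} For odd $g$ every attachment flips the parity of the longest cycle (Proposition~\ref{prop:attach_to_cycle}, Remark~\ref{rem:delta-parity}). Attaching to an even structure therefore raises the odd count from $0$ to $1$; it does not leave it at $0$. Hence your invariant ``$f(m')$ is the lexicographic optimum over partial constructions with $m'$ edges'' fails for the recursion as written. Take the mandatory cycle as the only structure and attach three successive paths to it: they really earn $p_1-\lfloor g/2\rfloor$, $p_2-\lceil g/2\rceil$, $p_3-\lfloor g/2\rfloor$. The recursion's rules credit the third only $p_3-\lceil g/2\rceil$. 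So $f$ is a priori only a lower bound, and you need a global exchange argument.

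Because the two increments differ by the constant $1$, a structure receiving $n_j$ paths gains $\sum_i p_i-n_j\lfloor g/2\rfloor-\lfloor n_j/2\rfloor$ in any order. Within your model, the true optimum is therefore the maximum over feasible $(c_g,n)$, with $n$ the number of paths, of
\[
  m-c_g\lceil g/2\rceil-n\lfloor g/2\rfloor-\max\bigl(0,\lceil (n-c_g)/2\rceil\bigr),
\]
whereas the recursion's rules charge at least $\max(0,n-c_g)$. You must show that some maximizer has $n-c_g\le 1$. For example, replacing two paths by a fresh $g$-cycle and one path leaves the value unchanged when the lengths permit. The residual length configurations must be handled by other replacements of paths by cycles.

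\textbf{The lexicographic order is not justified.} Your argument compares one unit of diameter with \emph{one} extra odd structure. But it must rule out $(d,r)$ being overtaken later by $(d-1,r+k)$ for any $k\ge 2$. Under the recursion's rules each odd structure is worth a full $+1$ later. Even with corrected bookkeeping, $k$ of them can be worth up to $\lceil k/2\rceil$, so a one-for-one comparison does not settle it.

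Concretely, take $g=5$ and $m'=13$. The recursion keeps $(6,0)$ (a new cycle with a $4$-path on each of the two cycles) and discards the candidate $(5,2)$ (two new cycles and a $3$-path). Appending two $4$-paths, its rules give $8$ from the kept state and $9$ from the discarded one. Here the recursion still returns $f(21)=(9,0)$ through a different last piece. Proving that the pruned recursion always reaches an optimal construction is a genuine lemma that neither your sketch nor the paper supplies.

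\textbf{The domain of $f$.} $f$ is only defined on $\mathcal{M}_g\cup\{0\}$, so $d(m-g)$ is undefined for $m\in\{\lceil 3g/2\rceil,\dots,2g-1\}$ (e.g.\ $g=5$, $m=8$). Stripping a last piece can also leave the domain. Your reading of $f$ tacitly extends it to all $m'\ge\lceil g/2\rceil$. You should state this explicitly, since you would then be proving a corrected version of the statement.
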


\begin{figure}[H]
\centering
\begin{subfigure}[h]{\textwidth}
\centering
\begin{tikzpicture}[
  every node/.style={circle, draw, fill=white, inner sep=2pt, minimum size=11pt},
  lbl/.style={draw=none, fill=none},
  scale=.55,
]

\begin{scope}[xshift=0cm, yshift=-1cm]
  \node[fill=blue!20] (q1)  at (0, 0)  {};
  \node (q2)  at (0, 1.5)  {};
  \node (q3)  at (1.5, 0)  {};
  \node (q4)  at (1.5, 1.5)  {};
  \node (q5)  at (3, 0)  {};
  \node (q6)  at (3, 1.5)  {};
  \node (q7)  at (0.75, 2)  {};
  \node[fill=blue!20] (q8)  at (4, 0.75)  {};

  \draw[] (q1)  -- (q2);
  \draw[red, dashed, thick] (q1)  -- (q3);
  \draw[] (q2)  -- (q7);
  \draw[] (q4)  -- (q7);
  \draw[] (q3)  -- (q4);
  \draw[red, dashed, thick] (q3)  -- (q5);
  \draw[red, dashed, thick] (q5)  -- (q8);
  \draw[] (q6)  -- (q8);
  \draw[] (q4)  -- (q6);

  \node[lbl, font=\small] at (2, -2.2) {$S = (5^{(9)}), d^*_S = 4$};
\end{scope}

\begin{scope}[xshift=8cm]
  \node[] (q0)  at (-1, -1)  {};
  \node[fill=blue!20] (q1)  at (-1, 1)  {};
  \node (q2)  at (1, -1)  {};
  \node (q3)  at (1, 1)  {};
  \node (q4)  at (2, 0)  {};
  \node (q5)  at (3, -1)  {};
  \node (q6)  at (3, 1)  {};
  \node (q7)  at (5, 1)  {};
  \node[fill=blue!20] (q8)  at (5, -1)  {};

  \draw[] (q0)  -- (q2);
  \draw[] (q1)  -- (q0);
  \draw[red, dashed, thick] (q1)  -- (q3);
  \draw[] (q2)  -- (q4);
  \draw[red, dashed, thick] (q3)  -- (q4);
  \draw[] (q4)  -- (q6);
  \draw[red, dashed, thick] (q4)  -- (q5);
  \draw[] (q6)  -- (q7);
  \draw[] (q8)  -- (q7);
  \draw[red, dashed, thick] (q8)  -- (q5);

  \node[lbl, font=\small] at (2, -3.2) {$S = (5^{(10)}), d^*_S = 4$};
\end{scope}

\begin{scope}[xshift=18cm, yshift=-1cm]
  \node (q1)  at (0, 0)  {};
  \node (q2)  at (0, 1.5)  {};
  \node (q3)  at (1.5, 0)  {};
  \node (q4)  at (1.5, 1.5)  {};
  \node (q5)  at (3, 0)  {};
  \node[fill=blue!20] (q6)  at (3, 2)  {};
  \node (q7)  at (0.75, 2)  {};
  \node (q8)  at (-1.5, 2)  {};
  \node[fill=blue!20] (q9)  at (-1.5, 0)  {};

  \draw[] (q1)  -- (q2);
  \draw[] (q1)  -- (q3);
  \draw[] (q2)  -- (q7);
  \draw[] (q4)  -- (q7);
  \draw[] (q3)  -- (q4);
  \draw[] (q3)  -- (q5);
  \draw[red, dashed, thick] (q7)  -- (q6);
  \draw[] (q5)  -- (q6);
  \draw[] (q9)  -- (q1);
  \draw[red, dashed, thick] (q8)  -- (q9);
  \draw[red, dashed, thick] (q7)  -- (q8);

  \node[lbl, font=\small] at (1.5, -2.2) {$S = (5^{(11)}), d^*_S = 3$};
\end{scope}

\end{tikzpicture}
\end{subfigure}

\hfill

\begin{subfigure}[h]{\textwidth}
\centering
\begin{tikzpicture}[
  every node/.style={circle, draw, fill=white, inner sep=2pt, minimum size=11pt},
  lbl/.style={draw=none, fill=none},
  scale=.55,
]

\begin{scope}[xshift=0cm]
  \node (q1)  at (0, 0)  {};
  \node (q2)  at (0, 1.5)  {};
  \node (q3)  at (1.5, 0)  {};
  \node (q4)  at (1.5, 1.5)  {};
  \node (q5)  at (3, 0)  {};
  \node (q6)  at (3, 1.5)  {};
  \node (q7)  at (0.75, 2)  {};
  \node[fill=blue!20] (q8)  at (4, 0.75)  {};
  \node (q9)  at (-1.5, 2)  {};
  \node[fill=blue!20] (q10)  at (-1.5, 0)  {};

  \draw[] (q1)  -- (q2);
  \draw[red, dashed, thick] (q1)  -- (q3);
  \draw[] (q2)  -- (q7);
  \draw[] (q4)  -- (q7);
  \draw[] (q3)  -- (q4);
  \draw[red, dashed, thick] (q3)  -- (q5);
  \draw[red, dashed, thick] (q5)  -- (q8);
  \draw[] (q6)  -- (q8);
  \draw[] (q4)  -- (q6);
  \draw[] (q9)  -- (q7);
  \draw[] (q10)  -- (q9);
  \draw[red, dashed, thick] (q10)  -- (q1);

  \node[lbl, font=\small] at (2, -2.2) {$S = (5^{(12)}), d^*_S = 4$};
\end{scope}

\begin{scope}[xshift=10.5cm]
  \node (q1)  at (0, 0)  {};
  \node (q2)  at (0, 1.5)  {};
  \node (q3)  at (1.5, 0)  {};
  \node (q4)  at (1.5, 1.5)  {};
  \node (q5)  at (3, 0)  {};
  \node (q6)  at (3, 1.5)  {};
  \node (q7)  at (0.75, 2)  {};
  \node[fill=blue!20] (q8)  at (4, 0.75)  {};
  \node (q9)  at (-1.5, 1.5)  {};
  \node (q10)  at (-1.5, 0)  {};
  \node[fill=blue!20] (q11)  at (-2.5, 0.75)  {};

  \draw[] (q1)  -- (q2);
  \draw[red, dashed, thick] (q1)  -- (q3);
  \draw[] (q2)  -- (q7);
  \draw[] (q4)  -- (q7);
  \draw[] (q3)  -- (q4);
  \draw[red, dashed, thick] (q3)  -- (q5);
  \draw[red, dashed, thick] (q5)  -- (q8);
  \draw[] (q6)  -- (q8);
  \draw[] (q4)  -- (q6);
  \draw[] (q2)  -- (q9);
  \draw[] (q11)  -- (q9);
  \draw[red, dashed, thick] (q11)  -- (q10);
  \draw[red, dashed, thick] (q10)  -- (q1);

  \node[lbl, font=\small] at (1, -2.2) {$S = (5^{(13)}), d^*_S = 5$};
\end{scope}

\begin{scope}[xshift=18cm]
  \node (q1)  at (0, 0)  {};
  \node[fill=blue!20] (q2)  at (0, 1.5)  {};
  \node (q3)  at (1.5, 0)  {};
  \node (q4)  at (1.5, 1.5)  {};
  \node (q5)  at (3, 0)  {};
  \node (q6)  at (3, 1.5)  {};
  \node (q7)  at (0.75, 2)  {};
  \node (q8)  at (4, 0.75)  {};
  \node (q12)  at (5, 0)  {};
  \node (q9)  at (5, 1.5)  {};
  \node (q10)  at (6.5, 0)  {};
  \node[fill=blue!20] (q11)  at (6.5, 1.5)  {};

  \draw[] (q1)  -- (q2);
  \draw[] (q1)  -- (q3);
  \draw[red, dashed, thick] (q2)  -- (q7);
  \draw[red, dashed, thick] (q4)  -- (q7);
  \draw[] (q3)  -- (q4);
  \draw[] (q3)  -- (q5);
  \draw[] (q5)  -- (q8);
  \draw[red, dashed, thick] (q6)  -- (q8);
  \draw[red, dashed, thick] (q4)  -- (q6);
  \draw[red, dashed, thick] (q8)  -- (q9);
  \draw[red, dashed, thick] (q11)  -- (q9);
  \draw[] (q10)  -- (q11);
  \draw[] (q10)  -- (q12);
  \draw[] (q8)  -- (q12);

  \node[lbl, font=\small] at (3, -2.2) {$S = (5^{(14)}), d^*_S = 6$};
\end{scope}

\end{tikzpicture}
\end{subfigure}
\caption{Examples of optimal graphs realizing the constant sequence
$S = (g^{(m)})$ with $g = 5$, attaining the maximum diameter $d^*_S$. 
}
\label{fig_example_odd}
\end{figure}

\begin{proof}
By Proposition~\ref{prop:constr-partition} and Definition~\ref{def:decomposition_vector},
every optimal construction contains at least one full $g$-cycle ($c_g \geq 1$).
We therefore factor out one such cycle: the maximum diameter for $(g^{(m)})$
is achieved by starting from a $g$-cycle, contributing $\lfloor g/2 \rfloor$
to the diameter and one fresh odd-cycle structure, and then optimally
distributing the remaining $m - g$ edges. This justifies the formula
$d^*_{(g^{(m)})} = d(m-g) + \lfloor g/2 \rfloor$, where $f(m-g) =
(d(m-g), r(m-g))$ is evaluated via the recursion on the remaining edges,
where the base case $f(0) = (0, 1)$ reflecting the availability of one fresh cycle.

The recursion on $f(m')$ for $m'=g$ and $m' \geq \lceil 3g/2 \rceil$ then reflects the
two operations available at each step. Throughout, $d(\cdot)$ denotes the
maximum diameter achievable, and $r(\cdot)$ the number of available
$g$-elementary structures whose longest cycle has odd length.

Adding a $g$-cycle increases the diameter by $\lfloor g/2 \rfloor$
(Proposition~\ref{prop:longest_cycle_diam}) and creates one new structure
with odd longest cycle length $g$, accounting for the term
$(d(m'-g) + \lfloor g/2 \rfloor,\, r(m'-g)+1)$.

Attaching a path of length $p$ to a structure with odd longest cycle
increases the diameter by $p - \lfloor g/2 \rfloor$
(Proposition~\ref{prop:attach_to_cycle}) but turns its longest cycle even,
consuming one unit of $r$. If no such structure is available, the
attachment must be performed on a structure with even longest cycle,
yielding the smaller increment $p - \lceil g/2 \rceil$ and leaving
$r$ unchanged. This is captured by $(\delta_p, \rho_p)$.

Since increasing $r(m')$ strictly improves the diameter increments available
at later steps without otherwise affecting $d(m')$, the lexicographic order
on $(d, r)$ correctly selects, among all choices achieving the maximum
diameter, the one preserving the largest number of odd-cycle structures.
The result follows by induction on $m'$..
\end{proof}


\subsection{Maximum Diameter of Graphs with Independent Edge-Girth Sequences}
\label{sec:diam_independent}

We call a sequence $S = (g_1^{(m_1)}, \ldots, g_k^{(m_k)}) \in \mathcal{S}^*$
\emph{independent} if $m_i \in \mathcal{M}_{g_i}$ for all $i$, meaning that
each constant subsequence $(g_i^{(m_i)})$ is independently realizable. In
Section~\ref{sec:diam_constant}, we determined $d^*_{(g^{(m)})}$ for constant
sequences; the goal of this section is to extend this to independent
sequences.

A key new phenomenon arises in this setting. When attaching a path of length
$p$ so that all its edges have edge-girth $g$, it is no longer necessary to
use an existing $g$-elementary structure as the support: the path may instead
be attached across structures of smaller edge-girth. For instance, a path of length $4$ whose edges are required to have edge-girth $5$ can be attached to a $3$-cycle by identifying its endpoints
with those of one of its edges, rather than to a $5$-elementary structure.

In what follows, we systematically study such hybrid attachments and identify
which configurations yield a larger diameter increment than attaching to an elementary structure of the same edge-girth.

\begin{definition}[Independent sequence]
\label{def:independent}
A sequence $S = (g_1^{(m_1)}, \ldots, g_k^{(m_k)}) \in \mathcal{S}^*$ is
called \emph{independent} if $m_i \in \mathcal{M}_{g_i}$ for all $1 \leq i
\leq k$, where
\[
  \mathcal{M}_g \;:=\; \{0,\, g\} \cup \{\, m \in \mathbb{N} \mid m \geq
  \lceil 3g/2 \rceil \,\}
\]
is the set of realizable multiplicities for constant sequences of value $g$
(Proposition~\ref{prop:constant}).
\end{definition}

\begin{proposition}
\label{prop:diam_independent_lower}
Let $S = (g_1^{(m_1)}, \ldots, g_k^{(m_k)}) \in \mathcal{S}^*$ be an
independent sequence. Then
\[
  d^*_S \;\geq\; \sum_{i=1}^{k} d^*_{(g_i^{(m_i)})}.
\]
\end{proposition}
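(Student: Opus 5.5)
The plan is a direct construction: glue diameter-optimal realizations of each constant block in a chain of vertex identifications. For each $i \in \{1,\ldots,k\}$ with $m_i > 0$, independence gives $m_i \in \mathcal{M}_{g_i}$, so $(g_i^{(m_i)}) \in \mathcal{S}^*$ by Proposition~\ref{prop:constant}. By Remark~\ref{rem:diam-welldefined} we may then pick $G_i \in \sigma^{-1}((g_i^{(m_i)}))$ with $\mathrm{diam}(G_i) = d^*_{(g_i^{(m_i)})}$. We also fix a diametral pair $a_i, b_i \in V(G_i)$ with $d_{G_i}(a_i,b_i) = \mathrm{diam}(G_i)$. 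Indices with $m_i = 0$ contribute $0$ to the sum and are discarded.

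We then form the chain
\[
H := \Bigl(\cdots\bigl(G_1 \underset{b_1,a_2}{\oplus} G_2\bigr) \underset{b_2,a_3}{\oplus} G_3 \cdots\Bigr) \underset{b_{k-1},a_k}{\oplus} G_k .
\]
Applying Proposition~\ref{prop:concat} repeatedly gives $\sigma(H) = \biguplus_i (g_i^{(m_i)}) = S$, so $H \in \sigma^{-1}(S)$. It then suffices to show that
\[
d_H(a_1, b_k) = \sum_{i=1}^k d_{G_i}(a_i,b_i),
\]
since this yields $d^*_S \geq \mathrm{diam}(H) \geq \sum_i d^*_{(g_i^{(m_i)})}$.

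The distance identity is the only step needing care. Each identified vertex $b_i = a_{i+1}$ is a cut vertex of $H$ separating $G_1 \cup \cdots \cup G_i$ from $G_{i+1} \cup \cdots \cup G_k$. Hence every $a_1$–$b_k$ path passes through all the vertices $b_1, \ldots, b_{k-1}$ in order. Between consecutive cut vertices, the path stays inside a single block $G_i$. Moreover, a shortest path between two vertices of $G_i$ never leaves $G_i$, because any excursion through a cut vertex would have to return through that same vertex and could be deleted. Summing the segments gives the identity.

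One degenerate case needs checking: when $a_i = b_i$ (for instance, a graph with diameter $0$), the identification still works and that block contributes $0$ to the sum. I expect no real obstacle here. The main thing to verify is that the vertex-identification definition keeps the identified vertex as a cut vertex, which holds because the vertex sets are otherwise disjoint.
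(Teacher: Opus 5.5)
Your proposal is correct and follows the same route as the paper: realize each constant block $(g_i^{(m_i)})$ by a graph attaining $d^*_{(g_i^{(m_i)})}$, chain these graphs by vertex identifications at diametral vertices, and use Proposition~\ref{prop:concat} to get $\sigma(H)=S$. Your cut-vertex argument for $d_H(a_1,b_k)=\sum_i d_{G_i}(a_i,b_i)$ supplies the distance computation that the paper's proof only asserts, and that lower bound on a single distance is all the inequality requires.
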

\begin{proof}
For each $i$, since $m_i \in \mathcal{M}_{g_i}$, the sequence
$(g_i^{(m_i)})$ is realizable. Let $G_i \in \sigma^{-1}((g_i^{(m_i)}))$
attain diameter $d^*_{(g_i^{(m_i)})}$. Connect $G_1, \ldots, G_k$ by
successive vertex identifications at diameter vertices. By
Proposition~\ref{prop:concat}, the resulting graph $G$ satisfies
$\sigma(G) = S$ and
\[
  \mathrm{diam}(G) \;=\; \sum_{i=1}^{k} d^*_{(g_i^{(m_i)})}. \qedhere
\]
\end{proof}

By Proposition~\ref{prop:attach_to_cycle}, attaching a path of length $p$
with target edge-girth $g$ to a $g$-elementary structure with longest cycle
of length $x$ yields a diameter increment $  \Delta(g, x, p) \;=\; p - \lfloor g/2 \rfloor
  - \bbone\{g \text{ odd},\, x \text{ even}\}$,
which depends only on the parity of $g$ and $x$.
We now investigate whether attaching a path of target edge-girth
$g$ to $g'$-elementary structures with $g' < g$ can improve upon this
increment.

\subsubsection{Hybrid Structures}
\label{sec:hybrid}

\begin{proposition}
\label{prop:attach_to_gp_structure}
Let $g, g' \in \mathcal{C}^*$ with $g' \leq g$, let $p \in \{\lceil g/2
\rceil, \ldots, g-1\}$, and let $G'$ be a $g'$-elementary structure with
longest cycle of length $x'$ satisfying $\lfloor x'/2 \rfloor \geq g - p$.
The optimal way to combine $p$ edges of target edge-girth $g$ with $G'$ is
to attach them as a path between two vertices at distance $g - p$ on the
longest cycle of $G'$, yielding a diameter increment
\[
  \Delta(g, x', p) \;=\; p - \lfloor g/2 \rfloor
  - \bbone\{g \text{ odd and } x' \text{ even}\}.
\]
\end{proposition}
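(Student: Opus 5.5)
The argument has two parts. First, the proposed attachment is admissible and achieves the stated increment. Second, no other way of placing $p$ edges of edge-girth $g$ on $G'$ does better.

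\emph{Admissibility and the increment.} Let $C$ be a longest cycle of $G'$, of length $x'$. Since $\lfloor x'/2 \rfloor \geq g-p$, there are two vertices $v,v'$ on $C$ with $d_C(v,v') = g-p$. We must also have $d_{G'}(v,v') = g-p$. For this we use the property, contained in the proof of Proposition~\ref{prop:longest_cycle_diam}, that the longest cycle of an elementary structure built by longest-cycle attachments is geodesic, i.e.\ $d_{G'} = d_C$ on its vertices. If needed, we re-establish this by a short induction on the path decomposition (Definition~\ref{def:g-elementary}). Because $p \geq \lceil g/2\rceil$ and $g' \leq g$, we have $p \geq g - d_{G'}(v,v') \geq g' - d_{G'}(v,v')$, i.e.\ $k = p \geq \max(\sigma(G')) - d_{G'}(v,v')$, which is the hypothesis of Proposition~\ref{prop:add_path}. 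That proposition then gives that all $p$ new edges have edge-girth exactly $g$ and that the edge-girths of $G'$ are unchanged.

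The computation of the increment repeats Proposition~\ref{prop:attach_to_cycle} with $g$ as the target value and $x'$ in place of $x$. The new longest cycle has length $x'-(g-p)+p = x'-g+2p$. The diameter goes from $\lfloor x'/2\rfloor$ to $\lfloor (x'-g+2p)/2\rfloor = p + \lfloor (x'-g)/2\rfloor$. A parity case split on $g$ and $x'$ shows that the difference equals $p - \lfloor g/2\rfloor - \bbone\{g \text{ odd},\, x' \text{ even}\}$. Nothing in this computation used $g' = g$; only the cycle length $x'$ and the distance $g-p$ enter.

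\emph{Optimality.} Any placement of $p$ edges that all have edge-girth $g$ and are added to $G'$ must, by Proposition~\ref{prop:realizing_shortest}(1) and the argument of Proposition~\ref{prop:add_path}, consist of paths whose endpoints $u,u'$ are at distance $g-q$ in the current graph, where $q$ is the length of that piece. If the $p$ edges are split into several pieces, some piece has length $q < \lceil g/2\rceil$. Such a piece joins vertices at distance $> \lfloor g/2\rfloor$ with a shortcut of length $< \lceil g/2\rceil$, which cannot increase the diameter, as in Remark~\ref{rem:optimal-decomp} and the argument in Lemma~\ref{lem:diam-monotone}. So we may assume a single path of length $p$ between $u,u'$ with $d(u,u') = g-p$. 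The new diameter is then bounded by the length of the longest geodesic cycle through the new path, halved. That cycle consists of the path together with an arc of at most $x'-(g-p)$ edges, because the complementary arc in $G'$ from $u'$ back to $u$ lies on a cycle of length at most $x'$ that contains a $u$–$u'$ geodesic of length $g-p$. This gives the bound $\lfloor (x'-g+2p)/2\rfloor$, attained exactly by the longest-cycle placement.

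The main obstacle is the last step: rigorously bounding the diameter after an arbitrary attachment by half the longest cycle created. My plan is to argue that in an elementary structure every vertex lies on the longest cycle or on an attached path of length at most $g-1$. Any farthest pair of vertices is then controlled by cycle lengths, so the diameter is $\lfloor (\text{longest cycle})/2\rfloor$, and the rest is the arc-length bound above.
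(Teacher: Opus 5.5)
\textbf{There is a genuine gap.} The statement appears to be false for arbitrary $g'$-elementary structures, so neither half of your proof can be completed as written.

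\textbf{First half.} This follows the paper's route. The paper's proof only asserts that $\lfloor x'/2\rfloor \ge g-p$ supplies two vertices at distance $g-p$ on the longest cycle, then reruns the computation of Proposition~\ref{prop:attach_to_cycle}. It never argues optimality.

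The lemma you add to justify the first step is false: the longest cycle of an elementary structure is not geodesic, even after one attachment. Add the path $a u_1 u_2 b$ to the $4$-cycle $abcda$. The new longest cycle is $a u_1 u_2 b c d a$, on which $a,b$ are at cycle-distance $3$ but graph distance $1$.

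Worse, the hypothesis $\lfloor x'/2\rfloor \ge g-p$ need not produce any admissible pair at all. Take the triangle $abc$ with ears $a u b$, $b w c$, $c y a$, each attached along the current longest cycle. This is a $3$-elementary structure with $x'=6$ but diameter $2$, so for $g=6$, $p=3$ there is no pair at distance $3$. The same graph defeats the plan in your last paragraph. In general only $\mathrm{diam}(G') \le \lfloor x'/2\rfloor$ holds, because any two vertices of a $2$-connected graph lie on a common cycle; equality can fail.

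\textbf{Second half (optimality).} The argument breaks at the arc bound, and no repair is possible at this generality. Let $G'$ be the theta graph with internally disjoint $s$--$t$ paths $s x_1 x_2 t$, $s y_1 y_2 t$, $s z_1 z_2 t$. It is $6$-elementary with $x'=6$ and diameter $3$; it is exactly the paper's diameter-maximizing realization of $(6^{(9)})$.

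Take $g=7$, $p=5$ and attach the path $x_1 w_1 w_2 w_3 w_4 y_1$.
\begin{itemize}
\item The endpoints are at distance $2$ on the longest cycle $s x_1 x_2 t y_2 y_1 s$, so the new edges get edge-girth $7$ and the old edges keep $6$.
\item The closing arc $x_1 x_2 t z_2 z_1 s y_1$ has length $6 > x'-(g-p)=4$. It passes through $s$, so it does not close into a cycle with the geodesic $x_1 s y_1$, contrary to your claim.
\item The new circumference is $11$, and $d(w_2,z_2)=\min(2+3,\,3+3)=5$. This is an increment of $2 > \Delta(7,6,5)=1$, exceeding your bound $\lfloor (x'-g+2p)/2\rfloor = 4$.
\item Attaching instead between $x_1$ and $t$ gives increment $1$.
\end{itemize}
So the increment is not a function of $x'$ alone, and the stated optimum does not hold here.

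Your reduction to a single piece is also only heuristic, since it leans on the unproved Remark~\ref{rem:optimal-decomp}. A correct version needs extra hypotheses on $G'$ and on the chosen pair. For instance, when $G'$ is a single cycle the result is a theta graph, and your increment computation is then exact.
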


\begin{proof}
The condition $\lfloor x'/2 \rfloor \geq g - p$ guarantees the existence of
two vertices at distance $g - p$ on the longest cycle of $G'$, and the diameter computation
follows the same derivation as in Proposition~\ref{prop:attach_to_cycle}.
\end{proof}

Proposition~\ref{prop:attach_to_gp_structure} generalizes
Proposition~\ref{prop:attach_to_cycle} to elementary structures of smaller
edge-girth $g' < g$. In particular, odd-length structures of any girth $g'
< g$ act as available resources for edges of edge-girth $g$: an odd longest
cycle of length $x'$ with $\lfloor x'/2 \rfloor \geq g - p$ offers the
larger increment $p - \lfloor g/2 \rfloor$, just as a fresh $g$-cycle would.

\begin{proposition}
\label{prop:attach_two_structures}
Let $g, g_1, g_2 \in \mathcal{C}^*$ with $g_1, g_2 \leq g$, let $p \in
\{\lceil g/2 \rceil, \ldots, g-1\}$, and let $G_1$, $G_2$ be $g_1$- and
$g_2$-elementary structures with longest cycles of lengths $x_1$ and $x_2$
respectively, satisfying
\[
  \left\lfloor \frac{x_1}{2} \right\rfloor + \left\lfloor \frac{x_2}{2}
  \right\rfloor \geq g - p.
\]
Combining $G_1$, $G_2$, and a path of $p$ edges with edge-girth $g$ so as
to maximize the resulting diameter yields a diameter increment of
(Figure~\ref{fig_delta-diam-2})
\[
  \Delta(g, x_1, x_2, p) \;=\; p - \left\lfloor \frac{g}{2} \right\rfloor
  - \bbone\{g \text{ odd},\, x_1 \text{ even},\, x_2 \text{ even}\}
  + \bbone\{g \text{ even},\, x_1 \text{ odd},\, x_2 \text{ odd}\}.
\]
\end{proposition}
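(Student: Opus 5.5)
We would prove the formula by an explicit construction giving a lower bound, together with a distance argument giving the matching upper bound. In both parts we reduce to the single-cycle computation of Proposition~\ref{prop:attach_to_cycle}.

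\emph{Setup.} By Proposition~\ref{prop:longest_cycle_diam}, the diameter of $G_i$ is realized by two antipodal vertices of its longest cycle $C_i$. First, join $G_1$ and $G_2$ by a vertex identification at a vertex $v$ lying on both $C_1$ and $C_2$. By Proposition~\ref{prop:concat} this creates no new cycle. Placing $v$ antipodal to the far endpoints $a_1 \in C_1$ and $a_2 \in C_2$ gives diameter $\lfloor x_1/2\rfloor + \lfloor x_2/2\rfloor$, which is the reference value for the increment.

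\emph{Construction.} Choose $u \in C_1$ and $w \in C_2$ with $d(u,v) = d_1$ and $d(v,w) = d_2$, where $d_1 + d_2 = g-p$, $d_1 \le \lfloor x_1/2\rfloor$ and $d_2 \le \lfloor x_2/2\rfloor$. The hypothesis $\lfloor x_1/2\rfloor + \lfloor x_2/2\rfloor \ge g-p$ guarantees such a split exists. Attach a path of length $p$ between $u$ and $w$. By Proposition~\ref{prop:add_path}, with $d_G(u,w) = d_1 + d_2$, every new edge has edge-girth exactly $g$ and all old edge-girths are unchanged.

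The new structure contains an outer cycle through $a_1$ and $a_2$: the long arc of $C_1$ from $v$ to $u$, then the new path, then the long arc of $C_2$ from $w$ to $v$. Its length is
\[
x_1 + x_2 + 2p - g .
\]
The two short arcs of lengths $d_1$ and $d_2$ meet at $v$ and act as chords of this cycle. We would then compute $d(a_1,a_2)$, and the eccentricities along the outer cycle, as the minimum of three routes:
\begin{itemize}
\item through $v$, of length $\lfloor x_1/2\rfloor + \lfloor x_2/2\rfloor$;
\item around the new path;
\item mixing one chord with part of the outer cycle.
\end{itemize}
The point $a_i$ is exactly antipodal to $v$ only when $x_i$ is even; when $x_i$ is odd, one extra unit of slack is available on that side. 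A case analysis on the parities of $g$, $x_1$ and $x_2$ then gives the claimed correction terms:
\begin{itemize}
\item the loss of $1$ when $g$ is odd and both $x_i$ are even, exactly as in Remark~\ref{rem:delta-parity};
\item the gain of $1$ when $g$ is even and both $x_i$ are odd, because the two half-unit slacks combine into a full unit.
\end{itemize}
To realize these values we would pick $d_1$ and $d_2$ according to the parities, for instance putting the odd slack on the side where it is not absorbed.

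\emph{Upper bound.} The $p$ new edges lie on a $g$-cycle through $u$ and $w$. Every vertex pair therefore either keeps its old distance (bounded by the old diameter plus what the new path adds) or has its distance controlled by an outer cycle of length at most $x_1+x_2+2p-g$ with two chords meeting at $v$. Attaching elsewhere than on $C_1 \cup C_2$, or not through the identification vertex, only shortens this outer cycle. This mirrors the argument of Proposition~\ref{prop:longest_cycle_diam}, so the increment is bounded by the same parity-dependent expression.

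The main obstacle is the exact distance bookkeeping in the four parity cases, in particular verifying the $+1$ gain for $g$ even and $x_1, x_2$ odd. This requires checking that no chord route through $v$ undercuts the outer-cycle distance. We would first try the explicit choice $d_1 = \lceil (g-p)/2\rceil$ and $d_2 = \lfloor (g-p)/2\rfloor$, adjusted by one unit according to parity, and compute all three route lengths directly.
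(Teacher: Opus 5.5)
There is a genuine gap, and it lies in the construction itself rather than in the parity bookkeeping you flag as the main obstacle. You first identify $G_1$ and $G_2$ at $v$, then attach one path from $u\in C_1$ to $w\in C_2$. So both short arcs (of lengths $d_1,d_2$) end at $v$, and $v$ lies on your outer cycle.

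Every vertex of the resulting graph lies in $G_1$, in $G_2$, or on the new $g$-cycle through $v$. Hence (for instance when $G_1,G_2$ are plain cycles) it is within $\lfloor x_1/2\rfloor$, $\lfloor x_2/2\rfloor$ or $\lfloor g/2\rfloor$ of $v$. The diameter is therefore at most the sum of the two largest of these three numbers, whatever $d_1,d_2$ are. In particular, the route through $v$ that you list first already caps every pair with one vertex in $C_1$ and one in $C_2$ at $\lfloor x_1/2\rfloor+\lfloor x_2/2\rfloor$. The outer cycle of length $x_1+x_2+2p-g$ is present, but its antipodal pairs are shortcut through $v$.

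Concretely, take $g=6$, $p=4$ and $G_1,G_2$ two $5$-cycles. The formula demands $\Delta=2$, i.e.\ diameter $6$. Your graph has diameter at most $3+2=5$ for every admissible $(d_1,d_2)$:
\begin{itemize}
\item with $d_1=d_2=1$ it is exactly $4$, so no increase at all;
\item with $(d_1,d_2)=(2,0)$ you merely recover a single-structure attachment, of diameter $5$.
\end{itemize}
So neither the lower bound nor the $+1$ you hoped to get from ``combined slack'' can come from this configuration.

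The missing idea is that the two short arcs must not share an endpoint. In the paper's construction (Figure~\ref{fig_delta-diam-2}), $G_1$ and $G_2$ stay vertex-disjoint. The $p$ new edges are split into two sub-paths of lengths $p_1+p_2=p$, each joining $C_1$ to $C_2$. Their endpoints on $C_i$ are at distance $k_i$, with $k_1+k_2=g-p$. The $g$-cycle is then made of $p_1$, $k_2$, $p_2$ and $k_1$ edges, which gives the new edges edge-girth $g$ and leaves the old edge-girths unchanged since $g_1,g_2\le g$.

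Now the two short arcs are chords on opposite sides of the outer cycle of length $x'=x_1+x_2-g+2p$. The middles of the two long arcs are at distance $\lfloor x'/2\rfloor=p+\lfloor (x_1+x_2-g)/2\rfloor$. Subtracting $\lfloor x_1/2\rfloor+\lfloor x_2/2\rfloor$ and splitting on the parities of $g,x_1,x_2$ gives exactly the stated $\Delta$, including the $+1$ for $g$ even and $x_1,x_2$ odd. In the example, $k_1=k_2=1$ and $p_1=p_2=2$ give diameter $6$.

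The sub-paths must be roughly balanced; the split $p_1=3$, $p_2=1$ only gives $5$ there, a point the paper leaves implicit. Your upper-bound paragraph, being built on chords meeting at $v$, would also have to be redone for this configuration.
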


\begin{figure}[H]
\centering
\begin{tikzpicture}[scale=.4]
  \coordinate (A) at (-3.00, 9.00);
  \coordinate (B) at (-3.00, 9.00);
  \coordinate (C) at (20.75, 10.50);
  \coordinate (D) at (20.75, 10.50);
  \coordinate (G) at (9.00, -9.25);
  \coordinate (H) at (7.20, 32.72);
  \coordinate (M) at (8.90, -8.84);
  \coordinate (N) at (7.06, 32.15);
  \coordinate (F) at (-1.3, 9.50);
  \coordinate (I) at (.5, 9.13);
  \coordinate (J) at (6.93, 15.67);
  \coordinate (O) at (7.14, 4.10);
  \coordinate (P) at (12.81, 10.34);
  \coordinate (Q) at (22, 9.77);

  \draw[blue, very thick] (A) circle (3.26);
  \draw[blue, very thick] (C) circle (6.05);
  \draw[red, very thick, dashed] (15.63, 13.71) arc (73.9:117.0:23.90);
  \draw[red, very thick, dashed] (-0.71, 6.69) arc (-106.9:-71.5:27.20);
  \draw[<->, >={Stealth[scale=1.4]}, thick, dotted] (-2.12, 11.93) arc (73.3:-47.9:3.06);
  \draw[<->, >={Stealth[scale=1.4]}, thick, dotted] (-0.95, 6.73) arc (-47.9:-286.7:3.06);
  \draw[<->, >={Stealth[scale=1.4]}, thick, dotted] (16.20, 6.90) arc (218.3:507.1:5.80);
  \draw[<->, >={Stealth[scale=1.4]}, thick, dotted] (15.88, 13.65) arc (147.1:218.3:5.80);
  \draw[<->, >={Stealth[scale=1.4]}, thick, dotted] (15.53, 14.13) arc (73.9:117.0:23.90);
  \draw[<->, >={Stealth[scale=1.4]}, thick, dotted] (-0.85, 6.12) arc (-106.9:-71.5:27.20);
  \node[below left, font=\large] at (F) {$x_1 - k_1$};
  \node[right, font=\large] at (I) {$k_1$};
  \node[right, font=\large] at (J) {$p_1$};
  \node[right, font=\large] at (O) {$p_2$};
  \node[right, font=\large] at (P) {$k_2$};
  \node[right, font=\large] at (Q) {$x_2 - k_2$};
\end{tikzpicture}
\caption{Optimal attachment of two $g_i$-elementary structures with longest
cycles of lengths $x_i$ ($i = 1, 2$) and a path of $p$ edges with edge-girth
$g$. The path is split into two sub-paths of lengths $p_1$ and $p_2$ with
$p_1 + p_2 = p$, attached at vertices at distances $k_1$ and $k_2$ from the endpoints of
the longest cycles of $G_1$ and $G_2$ respectively, with $k_1 + k_2 = g - p$.%
\label{fig_delta-diam-2}}
\end{figure}

\begin{proof}
Following Figure~\ref{fig_delta-diam-2}, the optimal construction places
$G_1$ and $G_2$ at the two extremities of the $g$-cycle, with the $p$ edges
of edge-girth $g$ forming the connecting path. This maximizes the distance
between the diameter endpoints by routing them through both structures.

The longest cycle of the resulting structure has length
\[
  x' \;=\; (x_1 - k_1) + p_1 + p_2 + (x_2 - k_2)
  \;=\; x_1 + x_2 - g + 2p,
\]
as $p_1 + p_2 = p$ and $k_1 + k_2 = g - p$. The new diameter is
$d' = p + \lfloor (x_1 + x_2 - g)/2 \rfloor$. Since the original diameter
is $d = \lfloor x_1/2 \rfloor + \lfloor x_2/2 \rfloor$, we obtain
\begin{align*}
  \Delta(g, x_1, x_2, p)
  &\;=\; p + \left\lfloor \frac{x_1 + x_2 - g}{2} \right\rfloor
  - \left\lfloor \frac{x_1}{2} \right\rfloor - \left\lfloor \frac{x_2}{2} \right\rfloor \\
  &\;=\; p - \left\lfloor \frac{g}{2} \right\rfloor
  - \bbone\{g \text{ odd},\, x_1 \text{ even},\, x_2 \text{ even}\}
  + \bbone\{g \text{ even},\, x_1 \text{ odd},\, x_2 \text{ odd}\},
\end{align*}
where the last equality follows from an elementary case analysis on the
parities of $g$, $x_1$, and $x_2$.
\end{proof}

Proposition~\ref{prop:attach_two_structures} shows that two $g_i$-elementary
structures can jointly support $p$ edges with target edge-girth $g$,
provided $\lfloor x_1/2 \rfloor + \lfloor x_2/2 \rfloor \geq g - p$.
The best-case scenarios are: when $g$ is even and both $x_1$ and $x_2$ are
odd, the increment $\Delta(g, x_1, x_2, p) = p - \lfloor g/2 \rfloor + 1$
exceeds the single-structure baseline. When $g$ is odd and at least one of
$x_1$, $x_2$ is odd, the increment $\Delta(g, x_1, x_2, p) = p - \lfloor
g/2 \rfloor$ matches the optimal single-structure case.

\begin{remark}
\label{rem:more_than_2}
Using three or more $g_i$-elementary structures to support a path of $p$
edges with target edge-girth $g$ is always suboptimal. Indeed, the diameter
depends only on the two extremal structures. Hence, any additional structure attached
at an interior point cannot increase the diameter between the two endpoints. Consequently, using three or
more structures always yields a strictly smaller diameter increment than the
optimal two-structure combination.
\end{remark}

\begin{remark}
\label{rem:available}
The hybrid structures introduced above, obtained by combining edges of
target edge-girth $g$ with $g'$-elementary structures for $g' < g$
(Propositions~\ref{prop:attach_to_gp_structure}
and~\ref{prop:attach_two_structures}), have diameter $\lfloor x/2 \rfloor$
where $x$ is their longest cycle length. They therefore remain available as
supports for subsequent attachments, in the same way as elementary
structures.
\end{remark}

\subsubsection{A Recursive Characterization of \texorpdfstring{$d^*_S$}{d*S}}
\label{sec:diam_recursive}

\begin{definition}[Recursive computation of $d^*_S$]
\label{def:algo_diam_independent}
Let $S = (g_1^{(m_1)}, \ldots, g_k^{(m_k)}) \in \mathcal{S}^*$ be an
independent sequence with $g_1 < \cdots < g_k$ (Figure~\ref{fig_algo-rec-example}).
A \emph{state} is a triple $(d, S', \mathcal{X})$ where:
\begin{itemize}
  \item $d \in \mathbb{N}$ is the diameter of the graph constructed so far,
  \item $S' \in \mathrm{Seq}(\mathcal{C}^*)$ is the subsequence of edges not
    yet incorporated into any structure,
  \item $\mathcal{X}$ is a multiset of pairs $(g', x)$, where each pair
    represents a connected subgraph already constructed, with longest cycle
    of length $x$ and edge-girth $g'$ (with $g' = 0$ for hybrid structures).
    These subgraphs are available to support future attachments.
\end{itemize}
Any state with $S'$ containing an index $i$ with $m_i < \lceil g_i/2
\rceil$ is discarded, as such multiplicities cannot be allocated by the
operations below.

\begin{enumerate}
  \item \emph{Initialization.} Set the initial state to $\bigl(0, S, \varnothing \bigr)$.

  \item \emph{Recursion.} Given a set $\mathcal{F}$ of current states, each
    with non-empty $S'$, let $g = \min(S')$ and replace $\mathcal{F}$ by the
    set of all new states obtained by expanding each state $(d, S',
    \mathcal{X}) \in \mathcal{F}$ via operations~\textup{(a)}--\textup{(c)}
    below.

    \begin{enumerate}
      \item[\textup{(a)}] \emph{Add a fresh $g$-cycle.} Add to $\mathcal{F}$:
        \[
          \bigl(d + \lfloor g/2 \rfloor,\;
            S' \setminus (g^{(g)}),\;
            \mathcal{X} \cup \{(g,\, g)\}\bigr).
        \]

      \item[\textup{(b)}] \emph{Attach $p$ edges of girth $g$ to a single
        structure.} For each $p \in \{\lceil g/2 \rceil, \ldots, g-1\}$ and
        each $(g', x) \in \mathcal{X}$ with $g' \leq g$ and $\lfloor x/2
        \rfloor \geq g - p$, add to $\mathcal{F}$:
        \[
          \bigl(d + \Delta(g, x, p),\;
            S' \setminus (g^{(p)}),\;
            (\mathcal{X} \setminus \{(g', x)\})
            \cup \{(\bbone\{g' = g\} \cdot g,\, x - g + 2p)\}\bigr).
        \]

      \item[\textup{(c)}] \emph{Attach $p$ edges of girth $g$ across two
        structures.} For each $p \in \{\lceil g/2 \rceil, \ldots, g-2\}$ and
        each pair $(g_1', x_1), (g_2', x_2) \in \mathcal{X}$ with $\lfloor
        x_1/2 \rfloor + \lfloor x_2/2 \rfloor \geq g - p$, where at least
        one of $x_1$, $x_2$ is odd if $g$ is odd, or both are odd if $g$ is
        even, add to $\mathcal{F}$:
        \[
          \bigl(d + \Delta(g, x_1, x_2, p),\;
            S' \setminus (g^{(p)}),\;
            (\mathcal{X} \setminus \{(g_1', x_1), (g_2', x_2)\})
            \cup \{(0,\, x_1 + x_2 - g + 2p)\}\bigr).
        \]
    \end{enumerate}

  \item \emph{Termination.} Repeat step~2 until all states in $\mathcal{F}$
    have $S' = \varnothing$. Return
    \[
      d^*_S \;=\; \max_{(d, \varnothing, \mathcal{X}) \in \mathcal{F}} d.
    \]
\end{enumerate}
\end{definition}

\begin{figure}[H]
\centering
\resizebox{.5\textwidth}{!}{%
\begin{circuitikz}
\tikzstyle{every node}=[font=\fontsize{14.2pt}{18.5pt}\selectfont]
\node [font=\fontsize{14.2pt}{18.5pt}\selectfont, fill={rgb,255:red,255; green,255; blue,255}, fill opacity=1, text opacity=1, inner xsep=0.080cm, inner ysep=0.085cm, rounded corners=0.020cm, align=center] at (-8.75,10.675) {$d=0$ \\ $S'=(3^{(3)}, 5^{(9)})$};
\node [font=\fontsize{14.2pt}{18.5pt}\selectfont, fill={rgb,255:red,255; green,255; blue,255}, fill opacity=1, text opacity=1, inner xsep=0.080cm, inner ysep=0.085cm, rounded corners=0.020cm, align=center] at (-8.75,7.25) {$d=1$ \\ $S'=(5^{(9)})$};
\node [font=\fontsize{14.2pt}{18.5pt}\selectfont, fill={rgb,255:red,255; green,255; blue,255}, fill opacity=1, text opacity=1, inner xsep=0.080cm, inner ysep=0.085cm, rounded corners=0.020cm, align=center] at (-11.25,3.5) {$d=3$ \\ $S'=(5^{(4)})$};
\node [font=\fontsize{14.2pt}{18.5pt}\selectfont, fill={rgb,255:red,255; green,255; blue,255}, fill opacity=1, text opacity=1, inner xsep=0.080cm, inner ysep=0.085cm, rounded corners=0.020cm, align=center] at (-6.25,3.5) {$d=3$ \\ $S'=(5^{(5)})$};
\node [font=\fontsize{14.2pt}{18.5pt}\selectfont, fill={rgb,255:red,255; green,255; blue,255}, fill opacity=1, text opacity=1, inner xsep=0.080cm, inner ysep=0.085cm, rounded corners=0.020cm, align=center] at (-6.25,-1.5) {$d=5$ \\ $S'=\varnothing$};
\node [font=\fontsize{14.2pt}{18.5pt}\selectfont, fill={rgb,255:red,255; green,255; blue,255}, fill opacity=1, text opacity=1, inner xsep=0.080cm, inner ysep=0.085cm, rounded corners=0.020cm, align=center] at (-11.25,-1.5) {$d=5$ \\ $S'=\varnothing$};
\node [font=\fontsize{14.2pt}{18.5pt}\selectfont, fill={rgb,255:red,255; green,255; blue,255}, fill opacity=1, text opacity=1, inner xsep=0.080cm, inner ysep=0.085cm, rounded corners=0.020cm] at (-8.75,11.5) {$\varnothing$};
\draw [line width=0.6pt, short] (-8.75,8.875) -- (-9.125,8.25);
\draw [line width=0.6pt, short] (-9.125,8.25) -- (-8.375,8.25);
\draw [line width=0.6pt, short] (-8.75,8.875) -- (-8.375,8.25);
\draw [line width=0.6pt, short] (-12,4.875) -- (-12.375,4.25);
\draw [line width=0.6pt, short] (-12.375,4.25) -- (-11.625,4.25);
\draw [line width=0.6pt, short] (-12,4.875) -- (-11.625,4.25);
\draw [line width=0.6pt, short] (-10.75,5.125) -- (-11.25,4.75);
\draw [line width=0.6pt, short] (-11.25,4.75) -- (-11.25,4.25);
\draw [line width=0.6pt, short] (-11.25,4.25) -- (-10.25,4.25);
\draw [line width=0.6pt, short] (-10.25,4.25) -- (-10.25,4.75);
\draw [line width=0.6pt, short] (-10.75,5.125) -- (-10.25,4.75);
\draw [line width=0.6pt, short] (-6,5.125) -- (-6.5,4.75);
\draw [line width=0.6pt, short] (-6,5.125) -- (-5.5,4.75);
\draw [line width=0.6pt, short] (-5.5,4.25) -- (-5.5,4.75);
\draw [line width=0.6pt, short] (-6.5,4.25) -- (-5.5,4.25);
\draw [line width=0.6pt, short] (-6.5,4.75) -- (-6.5,4.25);
\draw [line width=0.6pt, short] (-6.5,4.25) -- (-7,4.375);
\draw [line width=0.6pt, short] (-6.5,4.75) -- (-7,4.375);
\draw [line width=0.6pt, short] (-7.25,-0.75) -- (-6.25,-0.75);
\draw [line width=0.6pt, short] (-7.25,-0.25) -- (-7.25,-0.75);
\draw [line width=0.6pt, short] (-7.25,-0.75) -- (-7.75,-0.625);
\draw [line width=0.6pt, short] (-7.25,-0.25) -- (-7.75,-0.625);
\draw [line width=0.6pt, short] (-6.75,0.125) -- (-7.25,-0.25);
\draw [line width=0.6pt, short] (-6.75,0.125) -- (-6.25,-0.25);
\draw [line width=0.6pt, short] (-6.25,-0.75) -- (-6.25,-0.25);
\draw [line width=0.6pt, short] (-5.5,0.125) -- (-6,-0.25);
\draw [line width=0.6pt, short] (-5.5,0.125) -- (-5,-0.25);
\draw [line width=0.6pt, short] (-5,-0.75) -- (-5,-0.25);
\draw [line width=0.6pt, short] (-6,-0.75) -- (-5,-0.75);
\draw [line width=0.6pt, short] (-6,-0.25) -- (-6,-0.75);
\draw [line width=0.6pt, short] (-11.125,0.125) -- (-11.625,-0.25);
\draw [line width=0.6pt, short] (-11.125,0.125) -- (-10.625,-0.25);
\draw [line width=0.6pt, short] (-10.625,-0.75) -- (-10.625,-0.25);
\draw [line width=0.6pt, short] (-11.625,-0.75) -- (-10.625,-0.75);
\draw [line width=0.6pt, short] (-11.625,-0.25) -- (-11.625,-0.75);
\draw [line width=0.6pt, short] (-9.625,-0.75) -- (-9.625,-0.25);
\draw [line width=0.6pt, short] (-10.125,0.125) -- (-9.625,-0.25);
\draw [line width=0.6pt, short] (-10.125,0.125) -- (-10.625,-0.25);
\draw [line width=0.6pt, short] (-10.625,-0.75) -- (-9.625,-0.75);
\draw [line width=0.6pt, short] (-12.875,-0.75) -- (-12.125,-0.75);
\draw [line width=0.6pt, short] (-12.5,-0.125) -- (-12.875,-0.75);
\draw [line width=0.6pt, short] (-12.5,-0.125) -- (-12.125,-0.75);
\draw [-{Stealth[scale=1.5]}, ] (-10.375,10) .. controls (-10.5,9) and (-10.5,9.25) .. (-10.375,8.5) ;
\draw [-{Stealth[scale=1.5]}, ] (-12.5,2.875) .. controls (-13.25,1.5) and (-13.25,1.375) .. (-12.625,0.25) ;
\draw [-{Stealth[scale=1.5]}, ] (-10,6.5) .. controls (-14.375,6) and (-14.75,4.375) .. (-13.125,4) ;
\draw [-{Stealth[scale=1.5]}, ] (-7.5,6.5) .. controls (-3,6) and (-2.625,4.375) .. (-4.375,4) ;
\draw [-{Stealth[scale=1.5]}, ] (-10.125,2.75) .. controls (-10,1.375) and (-10,1.375) .. (-7.5,0.125) ;
\draw [-{Stealth[scale=1.5]}, ] (-5.125,2.75) .. controls (-4.5,1.375) and (-4.5,1.375) .. (-5,0.25) ;
\node [font=\fontsize{14.2pt}{18.5pt}\selectfont, fill={rgb,255:red,255; green,255; blue,255}, fill opacity=1, text opacity=1, inner xsep=0.080cm, inner ysep=0.085cm, rounded corners=0.020cm] at (-12.125,9.125) {Add};
\draw [line width=0.6pt, short] (-11.25,9.5) -- (-11.625,8.875);
\draw [line width=0.6pt, short] (-11.625,8.875) -- (-10.875,8.875);
\draw [line width=0.6pt, short] (-11.25,9.5) -- (-10.875,8.875);
\node [font=\fontsize{14.2pt}{18.5pt}\selectfont, fill={rgb,255:red,255; green,255; blue,255}, fill opacity=1, text opacity=1, inner xsep=0.080cm, inner ysep=0.085cm, rounded corners=0.020cm] at (-14.125,6.375) {Add};
\node [font=\fontsize{14.2pt}{18.5pt}\selectfont, fill={rgb,255:red,255; green,255; blue,255}, fill opacity=1, text opacity=1, inner xsep=0.080cm, inner ysep=0.085cm, rounded corners=0.020cm] at (-5.1,6.5) {Attach};
\node [font=\fontsize{14.2pt}{18.5pt}\selectfont, fill={rgb,255:red,255; green,255; blue,255}, fill opacity=1, text opacity=1, inner xsep=0.080cm, inner ysep=0.085cm, rounded corners=0.020cm] at (-13.875,1.875) {Attach};
\node [font=\fontsize{14.2pt}{18.5pt}\selectfont, fill={rgb,255:red,255; green,255; blue,255}, fill opacity=1, text opacity=1, inner xsep=0.080cm, inner ysep=0.085cm, rounded corners=0.020cm] at (-9,2.25) {Attach};
\node [font=\fontsize{14.2pt}{18.5pt}\selectfont, fill={rgb,255:red,255; green,255; blue,255}, fill opacity=1, text opacity=1, inner xsep=0.080cm, inner ysep=0.085cm, rounded corners=0.020cm] at (-3.875,1.5) {Add};
\draw [line width=0.6pt, short] (-12.875,7) -- (-13.375,6.625);
\draw [line width=0.6pt, short] (-12.875,7) -- (-12.375,6.625);
\draw [line width=0.6pt, short] (-12.375,6.125) -- (-12.375,6.625);
\draw [line width=0.6pt, short] (-13.375,6.125) -- (-12.375,6.125);
\draw [line width=0.6pt, short] (-13.375,6.625) -- (-13.375,6.125);
\draw [line width=0.6pt, short] (-4.25,6.25) -- (-3.25,6.25);
\draw [line width=0.6pt, short] (-3.25,6.25) -- (-3.25,6.75);
\draw [line width=0.6pt, short] (-3.75,7.125) -- (-3.25,6.75);
\draw [line width=0.6pt, short] (-3.75,7.125) -- (-4.25,6.75);
\draw [ color={rgb,255:red,0; green,0; blue,0}, draw opacity=0.33, dashed] (-4.25,6.75) -- (-4.25,6.25);
\draw [line width=0.6pt, short] (-14.375,0.625) -- (-13.375,0.625);
\draw [line width=0.6pt, short] (-13.375,0.625) -- (-13.375,1.125);
\draw [line width=0.6pt, short] (-13.875,1.5) -- (-13.375,1.125);
\draw [line width=0.6pt, short] (-13.875,1.5) -- (-14.375,1.125);
\draw [ color={rgb,255:red,0; green,0; blue,0}, draw opacity=0.33, dashed] (-14.375,1.125) -- (-14.375,0.625);
\draw [line width=0.6pt, short] (-8.625,1) -- (-7.625,1);
\draw [line width=0.6pt, short] (-7.625,1) -- (-7.625,1.5);
\draw [line width=0.6pt, short] (-8.125,1.875) -- (-7.625,1.5);
\draw [line width=0.6pt, short] (-8.125,1.875) -- (-8.625,1.5);
\draw [ color={rgb,255:red,0; green,0; blue,0}, draw opacity=0.33, dashed] (-8.625,1.5) -- (-8.625,1);
\draw [line width=0.6pt, short] (-3.75,1.25) -- (-4.25,0.875);
\draw [line width=0.6pt, short] (-3.75,1.25) -- (-3.25,0.875);
\draw [line width=0.6pt, short] (-3.25,0.375) -- (-3.25,0.875);
\draw [line width=0.6pt, short] (-4.25,0.375) -- (-3.25,0.375);
\draw [line width=0.6pt, short] (-4.25,0.875) -- (-4.25,0.375);
\draw [ dashed, red] (-11.125,-0.75) ellipse (1.875cm and 1.5cm);
\draw [ dashed, red] (-6.25,-1) ellipse (1.875cm and 1.5cm);
\draw [ dashed] (-6.25,4) ellipse (1.875cm and 1.5cm);
\draw [ dashed] (-11.25,4) ellipse (1.875cm and 1.5cm);
\draw [ dashed] (-8.75,7.625) ellipse (1.875cm and 1.5cm);
\draw [ dashed] (-8.75,10.875) ellipse (1.875cm and 1.5cm);
\end{circuitikz}
}%
\caption{Illustration of the procedure of
Definition~\ref{def:algo_diam_independent} for $S = (3^{(3)}, 5^{(9)})$. For
readability, the component $\mathcal{X}$ of each state is depicted by the
corresponding graphs rather than by the pairs $(g', x)$. Invalid states with
$d = -\infty$ are omitted. Terminal states (bottom, red) indicate $d^*_S = 5$.%
\label{fig_algo-rec-example}}
\end{figure}

\begin{proposition}
\label{prop:max_diam_independent}
Let $S = (g_1^{(m_1)}, \ldots, g_k^{(m_k)}) \in \mathcal{S}^*$ be an
independent sequence with $g_1 < \cdots < g_k$. Then $d^*_S$ is given by
the procedure of Definition~\ref{def:algo_diam_independent}.
\end{proposition}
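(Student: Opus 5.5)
We prove the equality by two inequalities: every terminal value produced by the procedure is the diameter of an actual graph in $\sigma^{-1}(S)$ (soundness), and every $G \in \sigma^{-1}(S)$ has diameter at most some terminal value (completeness). Both directions go by induction on the number of expansion steps. The invariant we maintain is that a state $(d, S', \mathcal{X})$ encodes a graph $H$ with
\[
\sigma(H) = S \setminus S', \qquad \mathrm{diam}(H) = d,
\]
whose components listed in $\mathcal{X}$ are joined by vertex identifications at diameter vertices and each have diameter $\lfloor x/2 \rfloor$ (Remark~\ref{rem:available}).

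\textbf{Soundness.} We check that each operation preserves the invariant.
\begin{itemize}
\item[(a)] Adding a fresh $g$-cycle uses Proposition~\ref{prop:concat}.
\item[(b)] Attaching to one structure uses Proposition~\ref{prop:add_path} together with Proposition~\ref{prop:attach_to_gp_structure}. Since $g=\min(S')$ exceeds every girth already present, the hypothesis $k \ge g_{\max}-d$ holds automatically once $p \ge \lceil g/2\rceil$ and the endpoints are at distance $g-p$.
\item[(c)] Attaching across two structures uses Proposition~\ref{prop:attach_two_structures}. Here we check that the path together with the two arcs of lengths $k_1,k_2$ forms a realizing $g$-cycle and creates no shorter cycle through old edges.
\end{itemize}
Because edges are processed in increasing girth, new cycles never shorten earlier edge-girths. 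Hence a terminal state yields $G$ with $\sigma(G)=S$ and $\mathrm{diam}(G)=d$, so the procedure's output is at most $d^*_S$.

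\textbf{Completeness.} Take $G\in\sigma^{-1}(S)$ attaining $d^*_S$. We peel it by girth classes, using Proposition~\ref{prop:remove-girth} to remove $E_{\max}$ level by level. Within each level we use a generalization of Propositions~\ref{prop:decomposition_elementary} and~\ref{prop:constr-partition}: the $g_i$-edges decompose into fresh $g_i$-cycles and paths, each of whose realizing cycle closes through shortest paths in the already-built graph. Paths of length $p<\lceil g/2\rceil$ are eliminated as in Remark~\ref{rem:optimal-decomp}, by replacing them with longer paths without decreasing the diameter. A realizing cycle for an attached path meets the previously built structures. By Remark~\ref{rem:more_than_2}, it suffices to consider the cases where it meets one or two extremal structures, which correspond to operations (b) and (c). In each case the diameter gain is bounded by the $\Delta$ formulas, which we take as the maximum possible increment for that configuration. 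The parity restrictions in (c) only discard dominated options: a dominated (c) move is never better than a (b) move on one of the two structures. Ordering the reconstruction by increasing girth, we obtain a sequence of expansions ending at a terminal state with $d \ge \mathrm{diam}(G)$.

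\textbf{Main obstacle.} The hardest step is the upper bound on the diameter gain in completeness. We must show that an arbitrary placement of a $g$-path cannot increase the diameter by more than the $\Delta$ of an equivalent (b) or (c) move. We must also show that the bookkeeping via longest-cycle lengths $x$ in $\mathcal{X}$ loses no information, i.e.\ that diameter-optimal graphs can be assumed to be chains of structures whose diameter is realized around their longest cycle. Our plan is an exchange argument: given an optimal $G$, we repeatedly reroute each attachment onto the longest cycle(s) of its supporting structure(s), in the manner of Proposition~\ref{prop:longest_cycle_diam}, showing that the diameter never decreases and $\sigma$ is preserved.
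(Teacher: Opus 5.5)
\textbf{Overall.} Your outline is the paper's. Soundness realizes (a)--(c) through Propositions~\ref{prop:concat}, \ref{prop:attach_to_gp_structure} and~\ref{prop:attach_two_structures}. Completeness peels girth levels, decomposes into elementary and hybrid structures, and invokes Remark~\ref{rem:more_than_2}.

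\textbf{The gap is in completeness.} You need $\mathrm{diam}(G)\le\max_{\mathcal F} d$ for an \emph{arbitrary} $G\in\sigma^{-1}(S)$. You only announce this as a planned exchange argument, and the facts you lean on do not deliver it.
\begin{itemize}
\item Remarks~\ref{rem:optimal-decomp} and~\ref{rem:more_than_2} are heuristics, not exchange lemmas. A path with $p<\lceil g/2\rceil$ cannot simply be ``replaced by a longer path'', because the multiplicity of $g$ is fixed; you would have to re-partition the whole level.
\item A realizing $g$-cycle of $G$ may pass through three or more lower-girth blocks with no two-support counterpart. For example, take $g=7$ and four new edges closing a $7$-cycle through one edge of each of three disjoint triangles; any two triangles supply distance at most $2<g-p=3$. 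So you must show such configurations are dominated by some explored state, not merely that the middle block adds nothing to the diameter.
\item ``Taking the $\Delta$ formulas as the maximum increment'' is exactly what has to be proved.
\item Your justification of the parity filter in (c) presupposes that a (b) move on one of the two structures is available. That fails precisely when only $\lfloor x_1/2\rfloor+\lfloor x_2/2\rfloor\ge g-p$ holds. It also compares one-step gains, although the two moves leave different multisets $\mathcal X$.
\end{itemize}
One genuine upper-bound handle: in a $2$-connected block $B$ any two vertices lie on a common cycle, so $\mathrm{diam}(B)\le\lfloor c(B)/2\rfloor$, where $c(B)$ is the circumference. Along the block--cut tree, $\mathrm{diam}(G)$ is at most the sum of these over a path of blocks. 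Completeness then becomes the claim that block circumferences never exceed what the $x$-bookkeeping can reach, which still has to be proved level by level. The paper's own proof is equally terse here, so you have reproduced its argument together with its gap.

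\textbf{A fixable flaw in soundness.} Your invariant keeps the structures of $\mathcal X$ already joined at cut vertices with $\mathrm{diam}(H)=d$, and this is incompatible with (c).
\begin{itemize}
\item Adding the two sub-paths of Figure~\ref{fig_delta-diam-2} between structures that are already connected creates extra cycles through the chain, possibly shorter than $g$.
\item Using the shared cut vertex as one end ($p_2=0$) loses diameter. Two triangles sharing $w$, plus a $4$-path joining a neighbour of $w$ in each, give $\sigma=(3^{(6)},6^{(4)})$ but diameter $3$, whereas the state predicts $2+\Delta(6,3,3,4)=4$.
\item The value $4$ is reached only with disjoint triangles and the balanced split $p_1=p_2=2$; the split $3+1$ again gives $3$.
\end{itemize}
So states should record disjoint blocks with $d=\sum_{(g',x)\in\mathcal X}\lfloor x/2\rfloor$, chained by vertex identification only at the end. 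Each (c) step must also specify a balanced split.
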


\begin{proof}
We show that the terminal states of Definition~\ref{def:algo_diam_independent}
are in correspondence with the diameter-maximizing constructions of graphs
realizing $S$.

\textit{Every terminal state is achievable.} Each operation corresponds to
an explicit graph construction with the stated diameter increment:
operation~(a) follows from Proposition~\ref{prop:longest_cycle_diam},
operation~(b) from Propositions~\ref{prop:attach_to_cycle}
and~\ref{prop:attach_to_gp_structure}, and operation~(c) from
Proposition~\ref{prop:attach_two_structures}. Hence every terminal state
$(d, \varnothing, \mathcal{X})$ yields a graph $G \in \sigma^{-1}(S)$ with
$\mathrm{diam}(G) = d$, so $d^*_S \geq \max_{\mathcal{F}} d$.

\textit{Every optimal construction is explored.} Conversely, let $G \in
\sigma^{-1}(S)$ attain $d^*_S$. By Propositions~\ref{prop:constr-partition}, ~\ref{prop:attach_to_gp_structure}, ~\ref{prop:attach_two_structures} and Remark~\ref{rem:available}, $G$ decomposes into elementary and hybrid structures
assembled by vertex identifications. Since edges of
edge-girth $g$ can only be supported by structures of girth $g' \leq g$, processing girths in increasing order
covers all such decompositions, and by Remark~\ref{rem:more_than_2}, at most
two supporting structures are needed per attachment, so operations~(a)--(c)
suffice. Hence the construction of $G$ corresponds to a sequence of
operations explored by the algorithm, and $d^*_S \leq \max_{\mathcal{F}} d$.
\end{proof}

\subsection{Maximum Diameter of Graphs -- General Case}
\label{sec:diam_general}

In Sections~\ref{sec:diam_constant} and~\ref{sec:diam_independent}, we determined
$d^*_S$ for constant sequences and independent sequences. By
Theorem~\ref{thm:main}, the remaining realizable values of $m_i$ for a given prefix
$S_0$ are those in $\mathcal{D}_{S_0,<}^{(g_i)}$ and $\mathcal{D}_{S_0,>}^{(g_i)}$ characterized in Propositions~\ref{prop:d_low_diam} and~\ref{prop:d_high_diam}.

\begin{remark}
The algorithm of Definition~\ref{def:algo_diam_independent} applies to any realizable
sequence $S= (g_1^{(m_1)}, \ldots, g_k^{(m_k)}) \in \mathcal{S}^*$ with $m_i \geq \lceil g_i/2 \rceil$ for all
$i$. In this regime, the $m_i$
edges of edge-girth $g_i$ are treated as paths attached to existing structures,
possibly via hybrid combinations.
\end{remark}

Hence, the remaining case is $m_i \in \mathcal{D}_{S_0,<}^{(g_i)} \cap \{1, \ldots,\lceil g_i/2 \rceil - 1\} = \{\max(1, g_i - d^*_{S_0}), \ldots, \\ \lceil
g_i/2 \rceil - 1\}$. We now determine $d^*_S$ in this
setting.

\begin{definition}[Recursive computation of $d^*_S$, general case]
\label{def:algo_diam_general}
Let $S = (g_1^{(m_1)}, \ldots, g_k^{(m_k)}) \in \mathcal{S}^*$ with $g_1 < \cdots < g_k$. Set $I_< := \{i \mid m_i < \lceil g_i/2 \rceil\}$, ordered
as $i_1 < \cdots < i_\ell$. Let $\mathcal{F}(S)$ denote the set of optimal
terminal states of $S$, defined recursively as follows.

\begin{itemize}
    \item If $I_< = \varnothing$, then $\mathcal{F}(S)$ is given by Definition~\ref{def:algo_diam_independent}.

  \item Otherwise, let $i^* = i_\ell$ be the largest index in $I_<$, and
    let $S' := S \setminus (g_{i^*}^{(m_{i^*})})$. Compute $\mathcal{F}(S')$
    recursively, then expand each state $(d, \varnothing, \mathcal{X}) \in
    \mathcal{F}(S')$ as follows.

    \begin{enumerate}
      \item[\textup{(a)}] \emph{Select supporting structures.} For each
        subset $\mathcal{A} \subseteq \{(g', x) \in \mathcal{X} \mid g' \leq
        g_{i^*}\}$ satisfying
        \[
          \sum_{(g', x) \in \mathcal{A}} \left\lfloor \frac{x}{2}
          \right\rfloor \;\geq\; g_{i^*} - m_{i^*},
        \]
        compute the updated state as in step~\textup{(b)} and add it to the
        candidate set.

      \item[\textup{(b)}] \emph{Update the state.} Distinguish two cases
        according to $|\mathcal{A}|$.
        \begin{itemize}
          \item \emph{$|\mathcal{A}| = 1$.} Let $(g', x)$ be the unique
            element of $\mathcal{A}$. Update
            \[
              d \;\leftarrow\; d + \Delta(g_{i^*}, x, m_{i^*}), \qquad
              \mathcal{X} \;\leftarrow\; (\mathcal{X} \setminus \{(g', x)\})
              \cup \{(0,\, x - g_{i^*} + 2m_{i^*})\}.
            \]
          \item \emph{$|\mathcal{A}| \geq 2$.} Among all pairs $(g_1', x_1),
            (g_2', x_2)$ from $\mathcal{A}$, choose the one maximizing
            $\Delta(g_{i^*}, x_1, x_2, m_{i^*})$
            (Proposition~\ref{prop:attach_two_structures}). The remaining
            structures in $\mathcal{A} \setminus \{(g_1', x_1), (g_2',
            x_2)\}$ close the $g_{i^*}$-cycle without contributing to the
            diameter. Update
            \[
              d \;\leftarrow\; d + \Delta(g_{i^*}, x_1, x_2, m_{i^*}),
              \qquad
              \mathcal{X} \;\leftarrow\; (\mathcal{X} \setminus \mathcal{A})
              \cup \{(0,\, x_1 + x_2 - g_{i^*} + 2m_{i^*})\}.
            \]
        \end{itemize}
    \end{enumerate}

    Set $\mathcal{F}(S)$ to be the set of all resulting states achieving the
    maximum diameter.
\end{itemize}

Return $d^*_S = \max_{(d, \varnothing, \mathcal{X}) \in \mathcal{F}(S)} d$.
\end{definition}

\begin{proposition}
\label{prop:max_diam_general}
Let $S = (g_1^{(m_1)}, \ldots, g_k^{(m_k)}) \in \mathcal{S}^*$. Then
$d^*_S$ is given by Definition~\ref{def:algo_diam_general}.
\end{proposition}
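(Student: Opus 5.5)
I would prove the statement by strong induction on $\ell = |I_<|$, the number of indices with $m_i < \lceil g_i/2 \rceil$. This mirrors the recursion in Definition~\ref{def:algo_diam_general}. In the base case $I_< = \varnothing$, every multiplicity satisfies $m_i \geq \lceil g_i/2 \rceil$, and the remark following Section~\ref{sec:diam_general} says the procedure of Definition~\ref{def:algo_diam_independent} applies verbatim. The claim then reduces to the argument of Proposition~\ref{prop:max_diam_independent}. I would rerun that argument, checking that it never used $m_i \in \mathcal{M}_{g_i}$, only that each layer of girth $g_i$ decomposes into attached paths of length at least $\lceil g_i/2 \rceil$ together with fresh cycles.

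For the inductive step, let $i^* = i_\ell$ and $S' = S \setminus (g_{i^*}^{(m_{i^*})})$. The edges of girth $g_{i^*}$ are not the largest girths in general, so I would first justify removing them. Proposition~\ref{prop:remove-girth} handles the maximum girth. For a non-maximal girth I would argue that a graph realizing $S$ yields, after deleting the $g_{i^*}$-edges, a graph realizing $S'$. This requires that those edges lie on no realizing cycle of a larger girth $g_j$. I expect this to hold only after a normalization: by the choice $i^* = \max I_<$, all larger girths have $m_j \geq \lceil g_j/2 \rceil$, so their supports can be rerouted as in Step~5 of Definition~\ref{def:max-linearization}. Since $m_{i^*} \leq \lceil g_{i^*}/2\rceil - 1 \leq g_{i^*}-1$, Proposition~\ref{prop:max-linear}, applied to the truncated sequence up to $g_{i^*}$, lets me assume the $g_{i^*}$-edges form a single path $P$. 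This path is attached between two vertices $v,v'$ with $d(v,v') = g_{i^*} - m_{i^*} > \lfloor g_{i^*}/2\rfloor$.

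The upper bound $d^*_S \leq$ (algorithm output) comes from tracing a shortest $v$–$v'$ path in $G\setminus P$. It passes through a sequence of structures of girth at most $g_{i^*}$ (elementary or hybrid, by Proposition~\ref{prop:constr-partition} and Remark~\ref{rem:available}) whose half-cycle lengths sum to at least $g_{i^*}-m_{i^*}$. That sequence is exactly a set $\mathcal{A}$ admissible in step~(a). By Remark~\ref{rem:more_than_2}, only the two extremal structures affect the diameter, so the increment is bounded by $\Delta(g_{i^*},x,m_{i^*})$ or by $\max\Delta(g_{i^*},x_1,x_2,m_{i^*})$. These bounds come from Propositions~\ref{prop:attach_to_gp_structure} and~\ref{prop:attach_two_structures}, extended to negative increments since now $m_{i^*} < \lceil g_{i^*}/2\rceil$, which is the same computation with $p = m_{i^*}$. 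For the restriction to states in $\mathcal{F}(S')$, I would use an exchange argument: replacing the $S'$-part by an optimal state with the same multiset profile does not decrease the final diameter. The lower bound follows by realizing each chosen update with an explicit path attachment, which preserves the edge-girth sequence by Proposition~\ref{prop:add_path}.

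The main obstacle is this exchange step. Keeping only maximum-diameter states of $\mathcal{F}(S')$ could discard a suboptimal state whose multiset $\mathcal{X}$ (parities, cycle lengths) supports a better shortcut. I would first try to show that the increment is monotone in $d$ and that the parity and availability profile of optimal states dominates. Failing that, I would weaken the recursion to retain all terminal states, under which the argument goes through directly.
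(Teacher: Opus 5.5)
Your plan follows the paper's own route: induction on $|I_<|$, a base case delegated to Definition~\ref{def:algo_diam_independent}, and an inductive step using one- or two-structure increments. It cannot be completed, because the statement already fails in your base case.

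Take $S=(3^{(9)},7^{(4)})$. Here $I_<=\varnothing$, since $4=\lceil 7/2\rceil$. Note that $4\notin\mathcal{M}_7$, so $S$ is not independent, and the paper's citation of Proposition~\ref{prop:max_diam_independent} does not cover it either.

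Let $G$ be the chain of triangles $\{u,u',w_1\}$, $\{w_1,v,w_2\}$, $\{w_2,t,t'\}$ together with a path $u\,p_1\,p_2\,p_3\,t$.
\begin{itemize}
\item Every triangle edge has edge-girth $3$.
\item Since $p_1,p_2,p_3$ have degree $2$ and $d(u,t)=3$ in the chain, every path edge has edge-girth $7$, so $\sigma(G)=S$.
\item $d_G(p_2,v)=4$: any route from $p_2$ reaches $u$ or $t$ after two steps, and neither is adjacent to $v$. Hence $d^*_S\ge 4$.
\end{itemize}

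The procedure, however, returns $3$. After the $3$-layer, Definition~\ref{def:algo_diam_independent} leaves only two states, $\{(3,3),(3,3),(3,3)\}$ and $\{(3,6)\}$, both with $d=3$.
\begin{itemize}
\item From the first state, no operation can absorb the four $7$-edges. A single triangle gives $\lfloor 3/2\rfloor<3$, a pair gives $1+1<3$, and a fresh $7$-cycle needs seven edges.
\item From the second state, (b) gives $\Delta(7,6,4)=0$.
\end{itemize}

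The step of your plan that breaks is the completeness half of the argument you intend to rerun. It rests on Remark~\ref{rem:more_than_2}: a path is supported by at most two structures, and only the extremal ones affect the diameter. Here the $7$-path needs all three triangles to reach distance $3$, and the apex $v$ of the middle triangle is an endpoint of the diameter.

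The property you propose to check (each layer splits into paths of length $\ge\lceil g_i/2\rceil$ plus cycles) holds in this example, so that check cannot catch the problem. Your inductive step invokes the same remark (``only the two extremal structures affect the diameter''), so it is unsupported too.

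Independently of this, the two points you flag yourself remain open.

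First, the exchange step. Keeping only the maximum-diameter states of $\mathcal{F}(S')$ is part of what the proposition asserts. Your fallback of retaining all terminal states proves a statement about a different recursion, and it would not repair the base case. The paper's proof treats this step only heuristically (``compensated by at most $+1$''), so you cannot lean on it.

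Second, the reduction from $S$ to $S'$ for a non-top layer. Propositions~\ref{prop:remove-girth} and~\ref{prop:max-linear} handle only the maximum girth, and edges of larger girth may have all their realizing cycles through $P$. For example, in $(3^{(9)},5^{(2)},7^{(4)})$, take the triangle chain above with a path $u\,s\,t$ and a path of length $4$ from $s$ to $v$. Deleting $us$ and $st$ turns the four $7$-edges into bridges, so $G\setminus P\notin\sigma^{-1}(S')$.

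Re-embedding those edges ``as in Step~5 of Definition~\ref{def:max-linearization}'' is not justified. That step only rearranges the top layer, and you would additionally need the re-embedding not to lose diameter.
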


\begin{proof}
We proceed by induction on $|I_<|$.

\textit{Base case.} If $I_< = \varnothing$, then $d^*_S$ is given by Proposition~\ref{prop:max_diam_independent}.

\textit{Inductive step.} Assume the result holds for all sequences with
$|I_<| - 1$ indices in $I_<$. Let $i^* = \max(I_<)$ and $S' = S \setminus
(g_{i^*}^{(m_{i^*})})$. By the induction hypothesis, $\mathcal{F}(S')$
correctly computes the optimal terminal states for $S'$, yielding a set of
states $(d, \varnothing, \mathcal{X})$ all achieving $d^*_{S'}$.

It remains to show that attaching the $m_{i^*}$ edges of girth $g_{i^*}$
via steps~(a)--(b) yields $d^*_S$. By Theorem~\ref{thm:main}, since
$m_{i^*} \in \mathcal{D}_{S_0,<}^{(g_{i^*})}$, we have $g_{i^*} - m_{i^*}
\leq d^*_{S'}$. By the induction hypothesis, $d^*_{S'}$ is achieved by a
configuration of structures in $\mathcal{X}$, so there exists a subset
$\mathcal{A} \subseteq \mathcal{X}$ with total diameter at least $g_{i^*} -
m_{i^*}$.

If $|\mathcal{A}| = 1$, the unique structure $(g', x)$ satisfies $\lfloor
x/2 \rfloor \geq g_{i^*} - m_{i^*}$, so the $m_{i^*}$ edges form a single
path attached to it. The diameter increment $\Delta(g_{i^*}, x, m_{i^*})$
follows from Proposition~\ref{prop:attach_to_gp_structure}.

If $|\mathcal{A}| \geq 2$, the $m_{i^*}$ edges are split into two sub-paths
attached to the extremal structures $(g_1', x_1)$ and $(g_2', x_2)$, chosen
to maximize $\Delta(g_{i^*}, x_1, x_2, m_{i^*})$. The remaining structures
in $\mathcal{A} \setminus \{(g_1', x_1), (g_2', x_2)\}$ are placed at
interior points without contributing to the diameter
(Remark~\ref{rem:more_than_2}). The diameter update follows from
Proposition~\ref{prop:attach_two_structures} with $p = m_{i^*}$.

Since the diameter increments differ by at most $1$ across all choices of
$\mathcal{A}$ (Propositions~\ref{prop:attach_to_gp_structure}
and~\ref{prop:attach_two_structures}), any suboptimal choice can be
compensated by at most $+1$ at a subsequent step. Retaining all states
achieving the maximum diameter ensures that no such compensation is missed,
completing the induction.
\end{proof}

\section{Concluding Remarks}
\label{sec:conclusion}

In this paper, we introduced the edge-girth sequence $\sigma(G)$ of a simple
connected graph $G$ as the nondecreasing sequence of edge-girth values over
all its edges, and asked which sequences are realizable. Our main result,
Theorem~\ref{thm:main}, gives a complete recursive characterization: a
sequence $S = S_0 \uplus (g^{(m)})$ is realizable if and only if $S_0$ is
realizable and the multiplicity $m$ lies in a set of admissible values determined by the maximum
diameter $d^*_{S_0}$. 
Furthermore, Section~\ref{sec:diam} determine the maximum diameter $d^*_S$ for
realizable sequences via recursive formulas, and provide explicit
constructions of diameter-achieving graphs.

Several natural questions arise from this work.

\paragraph{Closed-form expressions for $d^*_S$.}
Diameter results of
Section~\ref{sec:diam} determine $d^*_S$
via recursive formulas. A closed form is available for constant sequences
with even $g$ (Proposition~\ref{prop:max_diam_regular_even_g}), but the
remaining cases rely on recursions that may be costly to evaluate. Numerical
simulations suggest that the sequence $(d^*_{S_0 \uplus (g^{(m)})})_{m \in
\mathcal{M}_g}$ contains periodic components in $m$ for fixed $S_0$ and $g$. It would be
interesting to determine whether a closed-form expression for $d^*_S$ exists
as a function of the prefix $S_0$ and $g$.

\paragraph{Planarity.}
All constructions developed in this paper naturally produce planar
graphs. This suggests that every realizable sequence $S \in \mathcal{S}^*$
is in fact realizable by a planar graph, i.e.\ that
\[
  \mathcal{S}^* \;=\; \bigl\{\, S \in \mathrm{Seq}(\mathcal{C}^*) \mid
  \exists\, G \in \mathcal{G},\; G \text{ planar},\; \sigma(G) = S \,\bigr\}.
\]
Proving this formally, and more generally studying which additional graph
properties are compatible with the realizability criterion of
Theorem~\ref{thm:main}, is a natural direction for future work.

\paragraph{Counting realizing graphs.}
The constructions developed in Section~\ref{sec:diam} provide explicit diameter-maximizing graphs
for any realizable sequence $S$. A natural follow-up is to count the number
of non-isomorphic graphs realizing $S$, in the spirit of classical enumeration
results for degree sequences~\cite{bender1978asymptotic}.

\paragraph{Realizable numbers of vertices.}
The realization problem studied here imposes constraints only on the edges of
$G$: two graphs may realize the same edge-girth sequence while having
different numbers of vertices (see Figure~\ref{fig_same-seq-diff-vertices}). For a given realizable sequence $S$, a natural
question is to characterize the set of integers $n$ such that there exists a
graph $G \in \sigma^{-1}(S)$ with $|V(G)| = n$. More generally, combining
edge-based constraints such as the edge-girth sequence with vertex-based
constraints (such as degree sequences) would yield a richer class of realization
problems that more tightly prescribes the local structure of the graph.

\begin{figure}[H]
\centering
  \begin{tikzpicture}
    \node[draw, circle] (A) at (0, 1) {};
    \node[draw, circle] (B) at (-1, 0) {};
    \node[draw, circle] (C) at (1, 0)  {};
    \node[draw, circle] (D) at (0, 2)  {};
    \node[draw, circle] (E) at (0, 3)  {};
    \draw[blue] (A) -- (B) -- (C) -- (A);
    \draw[blue] (A) -- (D) -- (E);
    \draw[blue] (B) -- (D);
    \draw[blue] (B) -- (E);
    \draw[blue] (C) -- (D);
    \draw[blue] (C) -- (E);
  \end{tikzpicture}
  \hspace{2cm}
  \begin{tikzpicture}
    \node[draw, circle] (X) at (0, 1.5) {};
    \node[draw, circle] (Y) at (-1, 0) {};
    \node[draw, circle] (Z) at (1, 0)  {};
    \node[draw, circle] (Z2) at (2, 1.5)  {};
    \node[draw, circle] (Z3) at (3, 0)  {};
    \node[draw, circle] (Z4) at (4, 1.5)  {};
    \node[draw, circle] (Z5) at (5, 0)  {};
    \draw[blue] (X) -- (Y) -- (Z) -- (X);
    \draw[blue] (Z) -- (Z3) -- (Z2) -- (Z);
    \draw[blue] (Z3) -- (Z4) -- (Z5) -- (Z3);
  \end{tikzpicture}
\caption{Two graphs realizing the same edge-girth sequence $\sigma(G) =
(3^{(9)})$ with different numbers of vertices: $5$ vertices (left) and $7$
vertices (right).%
\label{fig_same-seq-diff-vertices}}
\end{figure}


\section*{Acknowledgements}
The main proof strategy and first draft were developed by L.M.,
P.H. and C.L. contributed to the revision and final presentation.

\bibliographystyle{plainnat}  
\bibliography{main}

\end{document}